\definecolor{DSgray}{cmyk}{0,1,0,0}
\newtheorem*{lemmax}{Proposition~\ref{lemma:x}}
\newenvironment{customthm}[1]
{\innercustomthm}
{\endinnercustomthm}
\newcommand{\blind}{1}
\renewcommand{\baselinestretch}{1.7}
\newtheorem{theorem}{Theorem}[section]
\newtheorem{lemma}[theorem]{Lemma}
\newtheorem{proposition}[theorem]{Proposition}
\newtheorem{assumption}{Assumption}
\newtheorem{remark}[theorem]{Remark}
\newtheorem{example}[theorem]{Example}
\let\c@table\c@figure
\let\c@lstlisting\c@figure
\newcommand{\tr}{\mathrm{tr}}
\newcommand{\E}{\mathbb{E}}
\renewcommand{\P}{\mathbb{P}}
\renewcommand{\O}{\mathcal{O}}
\newcommand{\R}{\mathbb{R}}
\newcommand{\N}{\mathcal{N}}
\newcommand{\diag}{\mathrm{diag}}
\newcommand{\Fro}{\mathrm{Fro}}
\newcommand{\intd}{\,\mathrm{d}}
\newcommand{\Qg}{Q^{\tg}}
\newcommand{\Qs}{Q^{\ts}}
\newcommand{\Qi}{Q^{\ti}}
\newcommand{\Qu}{Q^{\tu}}
\newcommand{\Qp}{Q^{\tp}}
\newcommand{\tg}{\tt{(G)}}
\newcommand{\ts}{\tt{(S)}}
\newcommand{\ti}{\tt{(I)}}
\newcommand{\tu}{\tt{(U)}}
\newcommand{\tp}{\tt{(P)}}
\newcommand{\tnr}{\tt{WOR}}
\newcommand{\kw}{{\tt{(KW)}}}
\newcommand{\akw}{{\tt{(AKW)}}}
\renewcommand{\rm}{{\tt{(RM)}}}
\newcommand{\bxi}{\boldsymbol{\xi}}
\newcommand{\bgamma}{\boldsymbol{\gamma}}
\newcommand{\bdelta}{\boldsymbol{\delta}}
\newcommand{\bvarepsilon}{\boldsymbol{\varepsilon}}
\renewcommand{\a}{\boldsymbol{a}}
\renewcommand{\b}{\boldsymbol{b}}
\newcommand{\Pv}{\mathcal{P}_{\v}}
\newcommand{\Pz}{\mathcal{P}_{\zet}}
\newcommand{\e}{\boldsymbol{e}}
\renewcommand{\u}{\boldsymbol{u}}
\renewcommand{\v}{\boldsymbol{v}}
\newcommand{\w}{\boldsymbol{w}}
\newcommand{\tth}{\boldsymbol{\theta}}
\newcommand{\zet}{\boldsymbol{\zeta}}
\newcommand{\X}{\boldsymbol{X}}
\newcommand{\x}{\boldsymbol{x}}
\newcommand{\z}{\boldsymbol{z}}
\newcommand{\y}{\boldsymbol{\theta'}}
\newcommand{\0}{\mathbf{0}}
\newcommand{\1}{\mathbf{1}}
\newcommand{\red}{}
 \definecolor{DSgray}{cmyk}{0,1,0,0}
\let\hat\widehat
\begin{document}

\if1\blind
{
\title{Online Statistical Inference for Stochastic Optimization via Kiefer-Wolfowitz Methods}
\author{Xi Chen\footnote{Stern School of Business, New York University, Email: xc13@stern.nyu.edu} ~ Zehua Lai\footnote{Committee on Computational and Applied Mathematics, University of Chicago, Email: laizehua@uchicago.edu} ~ He Li\footnote{Stern School of Business, New York Universitwy, Email: hli@stern.nyu.edu} ~ Yichen Zhang\footnote{Krannert School of Management, Purdue University, Email: zhang@purdue.edu}}
\date{}
  \maketitle
} \fi
\if0\blind
{
\title{Online Statistical Inference for Stochastic Optimization via Kiefer-Wolfowitz Methods}
\author{}
\date{}
  \maketitle
} \fi
\vspace{-1.5em}

\begin{abstract}

This paper investigates the problem of online statistical inference of model parameters in stochastic optimization problems via the Kiefer-Wolfowitz algorithm with random search directions. We first present the asymptotic distribution for the Polyak-Ruppert-averaging type Kiefer-Wolfowitz (AKW) estimators, whose asymptotic covariance matrices depend on the distribution of search directions and the function-value query complexity. The distributional result reflects the trade-off between statistical efficiency and function query complexity. We further analyze the choice of random search directions to minimize certain summary statistics of the asymptotic covariance matrix. Based on the asymptotic distribution, we conduct online statistical inference by providing two construction procedures of valid confidence intervals.
\end{abstract}

\noindent
{\it Keywords:}  Asymptotic normality, Kiefer-Wolfowitz stochastic approximation, online inference, stochastic optimization

\section{Introduction}
\label{sec:intro}

Stochastic optimization algorithms, introduced by \cite{robbins1951stochastic,Kiefer1952}, have been widely used in statistical estimation, especially for large-scale datasets and online learning where the sample arrives sequentially (e.g., web search queries, transactional data). The Robbins-Monro algorithm \citep{robbins1951stochastic}, often known as the stochastic gradient descent, is perhaps the most popular algorithm in stochastic optimization and has found a wide range of applications in statistics and machine learning. 
Nevertheless, in many modern applications, the gradient information is not available. For example, the objective function may be embedded in a black box and the user can only access the noisy objective value for a given input.  In such cases,  the Kiefer-Wolfowitz algorithm \citep{Kiefer1952} becomes a natural choice as it is completely free of gradient computation. Despite being equipped with an evident computational advantage to avoid gradient measurements,  the Kiefer-Wolfowitz algorithm  has been historically out of practice as compared to the Robbins-Monro counterpart. 
Nonetheless, heralded by the big data era, there has been a restoration of the interest of gradient-free optimization in a wide range of applications in recent years \citep{conn2009introduction,nesterov2017random}. We briefly highlight a few of them to motivate our paper. 
\begin{itemize} 
  \item  In some bandit problems, one may only have black-box access to individual objective values but not to their gradients \citep{flaxman2005online,shamir2017optimal}. Other examples include graphical models and variational inference problems, where the objective is defined variationally \citep{wainwright2008graphical}, and the explicit differentiation can be difficult.
  
  \item In some scenarios, the computation of gradient information is possible but very expensive. For example, in the online sensor selection problem \citep{joshi2008sensor}, evaluating the stochastic gradient requires the inverse of matrices, which generates $\O(d^3)$ computation cost per iteration, where $d$ is the number of sensors in the network. In addition, the storage for gradient calculation also requires an $O(d^3)$ memory, which could be practically infeasible. 
\end{itemize}

This paper aims to study the asymptotic properties of the Kiefer-Wolfowitz stochastic optimization and conduct online statistical inference. 
In particular, we consider the problem,
\begin{align}\label{eq:model}
\tth^\star = \mathrm{argmin}~F(\tth), \quad \text{where} \; F(\tth) := \E_{\Pz} \left[f(\tth; \zet)\right]=\int f(\tth;\zet)\text{d}\Pz, \end{align}
where $f(\tth; \zet)$ is a convex \emph{individual loss function} for a data point $\zet$, $F(\tth)$ is the \emph{population loss} function, and $\tth^\star$ is the true underlying parameter of a fixed dimension $d$.
Let $\tth_0$ denote any given initial point. Given a sequentially arriving online sample $\{\zet_n\}$, the \cite{robbins1951stochastic} algorithm {\rm}, also known as the stochastic gradient descent (SGD),  iteratively updates,
\begin{itemize}
  \item[\rm\label{rm}] \hfill%
    $\tth_{n}^\rm=\tth_{n-1}^\rm-\eta_n g(\tth_{n-1};\zet_n)$,%
  \hfill\refstepcounter{equation}\label{eq:rm}\textup{(\theequation)}%
\end{itemize}
where $\{\eta_n\}$ is a positive non-increasing step-size sequence, and $g(\tth; \zet)$ denotes the stochastic gradient, i.e., $g(\tth;\zet)=\nabla f(\tth;\zet)$.
In the scenarios that direct gradient measurements are inaccessible to practitioners, the \cite{Kiefer1952} algorithm  {\kw} becomes the natural choice, as
\begin{itemize}
  \item[\kw\label{kw}] \hfill%
    $\tth_{n}^\kw=\tth_{n-1}^\kw-\eta_n \widehat{g}(\tth_{n-1};\zet_n)$,%
  \hfill\refstepcounter{equation}\label{eq:kw}\textup{(\theequation)}%
\end{itemize}
where $\widehat{g}(\tth_{n-1};\zet_n)$ is an estimator of $g(\tth_{n-1};\zet_n)$. Under the univariate framework ($d=1$), \cite{Kiefer1952} considered the finite-difference approximation
\begin{align}\label{eq:hatq}
\widehat{g}(\theta_{n-1};\zeta_n)=\frac{f(\theta_{n-1}+h_n;\zeta_n)-f(\theta_{n-1};\zeta_n)}{h_n},
\end{align}
where $h_n$ is be a positive deterministic sequence that goes to zero. 
\cite{Blum1954} later extended the algorithm to the multivariate case
and proved its almost sure convergence. This pioneering work extended in various directions of statistics and control theory (see, e.g., \cite{fabian1967stochastic,fabian1980stochastic,hall2014martingale,ruppert1982almost,chen1988lower,polyak1990optimal,spall1992multivariate,chen1999kiefer,spall2000adaptive,hall2003sequential,dippon2003accelerated,mokkadem2007companion,broadie2011general}). In the optimization literature, the Kiefer-Wolfowitz {\kw} algorithm is often referred to as the gradient-free stochastic optimization, or zeroth-order SGD \citep[among others]{agarwal2010optimal,agarwal2011stochastic,Jamieson2012,ghadimi2013stochastic,duchi2015optimal,shamir2017optimal,nesterov2017random,wang2017stochastic}. 

For the {\rm} algorithm in \eqref{eq:rm}, \cite{ruppert1988efficient} and \cite{polyak1992acceleration} characterize the limiting distribution and statistical efficiency of the \emph{averaged iterate} $\overline{\tth}_{n}^{\rm}=\frac{1}{n} \sum_{i=1}^n \tth_i^{\rm}$  by
\begin{align}
	\label{eqpolyak}
	\sqrt{n}\left(\overline{\tth}_{n}^{\rm} - \tth^\star\right) \Longrightarrow \N\left(\0, H^{-1} S H^{-1}\right),
\end{align}
where $H=\nabla^2 F(\tth^\star)$ is the Hessian matrix of $F(\tth)$ at $\tth=\tth^\star$, and $S=\E[\nabla f(\tth^\star;\zet)\nabla f(\tth^\star; \zet)^\top]$ is the Gram matrix of the stochastic gradient. 
Under a well-specified model, this asymptotic covariance matrix matches the inverse Fisher information and the averaged {\rm} estimator is asymptotically efficient. Based on the limiting distribution result \eqref{eqpolyak}, there are many recent research efforts devoted to statistical inference for {\rm}. A brief survey is conducted at the end of the introduction.

For the {\kw} scheme,  we can similarly construct the averaged Kiefer-Wolfowitz {\akw} estimator
\begin{itemize}
	\item[\akw\label{eq:akw}] \hfill%
	$\overline{\tth}_n^\kw=\frac{1}{n} \sum\limits_{i=1}^n \tth_i^\kw$.
	\hfill\refstepcounter{equation}\label{eq:akw}\textup{(\theequation)}
\end{itemize}

As compared to well-established asymptotic properties of {\rm}, study of the asymptotics of {\akw} is limited, particularly with a random sampling direction in multivariate {\kw}. 
In this paper, we study the {\kw} algorithm  \eqref{eq:kw} with random search directions $\{\v_i\}_{i=1}^n \overset{i.i.d.}{\sim} \Pv$, i.e., 
{at each iteration $i=1,2,\dots, n$, a random direction $\v_i$ is sampled independently from $\Pv$, and the {\kw} gradient }
\begin{align}\label{eq:v}
\widehat{g}_{h_n, \v_n}(\tth_{n-1};\zet_n)=\frac{f(\tth_{n-1}+h_n\v_n;\zet_n)-f(\tth_{n-1};\zet_n)}{h_n} \v_n.
\end{align}
Compared to the {\rm} scheme, {\kw} introduces additional randomness into the stochastic gradient estimator through $\{\v_n\}$.  Indeed, as one can see from our main result in Theorem \ref{thm:clt}, {\akw}  is no longer statistically efficient and its asymptotic covariance structure depends on the distribution $\Pv$.  It opens the room for the investigation on the impact of $\Pv$ (see Section \ref{sec:choice_dir} for details). We further extend the estimator to utilize multiple function-value queries per step and establish an online statistical inference framework. We summarize our main results and contributions as follows,

\begin{itemize}
\item First, we quantify the asymptotic covariance structure of \hyperref[eq:akw]{\akw} in Theorem \ref{thm:clt}. Since the asymptotic distribution depends on the choice of the direction variable $\v$, we provide an introductory analysis on the asymptotic performance for different choices of random directions for constructing {\akw} estimators (see Section \ref{sec:choice_dir}).

\item  { The efficiency loss of {\akw} is due to the information constraint as one evaluates only \emph{two} function values at each iteration.
We analyze the {\akw} estimators in which multiple function queries can be assessed at each iteration, and show that the asymptotic covariance matrix  decreases as the number of function queries $m+1$ increases (see Section \ref{sec:multiple}). Moreover, {\akw} achieves asymptotic statistical efficiency as $m\rightarrow \infty$. We further show that when $\v$ is sampled without replacement from $\Pv$ with a discrete uniform distribution of any orthonormal basis,  {\akw}  achieves asymptotic statistical efficiency with $d+1$ function queries per iteration.}

\item { \label{page:intro_inf}Based on the asymptotic distribution, we propose two online statistical inference procedures. The first one is using a plug-in estimator of the asymptotic covariance matrix, which separately estimates the Hessian matrix and Gram matrix of the {\kw} gradients (with additional function-value queries, see Theorem \ref{thm:plugin}). The second procedure is to characterize the distribution of intermediate {\kw} iterates as a stochastic process and construct an asymptotically pivotal statistic by normalizing the {\akw} estimator, without directly estimating the covariance matrix. 
This inference procedure follows the ``random scaling'' method proposed in \cite{lee2021fast} that considers the online inference for the {\rm} scheme.} These two procedures have their advantages and disadvantages: the plug-in approach leads to better empirical performance but requires additional function-value queries to estimate the Hessian matrix, while the other one is more efficient in both computation and storage, though its finite-sample performance is inferior in practice when the dimension is large. A practitioner may choose the approach suitable to her computational resources and requirement of the inference accuracy.

\end{itemize}
Lastly, we provide a brief literature survey on the recent works for statistical inference for the {\rm}-type SGD algorithms. \cite{chen2020batchmeans} developed a batch-means estimator of the limiting covariance matrix $H^{-1} S H^{-1}$ in \eqref{eqpolyak}, which only uses the stochastic gradient information (i.e., without estimating any Hessian matrices). \cite{Zhu:21online} further extended the batch-means method in \cite{chen2020batchmeans} to a fully online covariance estimator. \label{page:lee2}{\cite{lee2021fast} extended the results in \cite{polyak1992acceleration} to a functional central limit theorem and utilized it to propose a novel online inference procedure that allows for efficient implementation, followed by \cite{lee2022fast,chen2023sgmm} for application to quantile regression and generalized method of moments.} \cite{fang2017scalable} presented a perturbation-based resampling procedure for inference. \cite{su2018uncertainty} proposed a tree-structured inference scheme, which splits the SGD into several threads to construct confidence intervals. \cite{liang2019statistical} introduced a moment-adjusted method and its corresponding inference procedure. 
\cite{toulis2017asymptotic} considered the implicit SGD, and investigate the statistical inference problem under the variant. \cite{duchi2021asymptotic} studied the stochastic optimization problem with constraints and investigate its optimality properties. \cite{chao2019generalization} proposed a class of generalized regularized dual averaging (RDA) algorithms and make uncertainty quantification possible for online $\ell_1$-penalized problems. \cite{shi2020statistical} developed an online estimation procedure for high-dimensional statistical inference. \cite{chen2020statistical} studied statistical inference of online decision-making problems via SGD in a contextual bandit setting. 

\subsection{Notations and organization of the paper}
We write vectors in boldface letters (e.g., $\tth$ and $\v$) and scalers in lightface letters (e.g., $\eta$). For any positive integer $n$, we use $[n]$ as a shorthand for the discrete set $\{1, 2, \cdots, n\}$. Let $\{\e_k\}_{k=1}^d$ be the standard basis in $\R^d$ with the $k$-th coordinate as $1$ and the other coordinates as $0$. Denote $I_d$ as the identity matrix in $\R^{d \times d}$. Let $\| \cdot \|$ denote the standard Euclidean norm for vectors and the spectral norm for matrices. 
We use $A_{k\ell}$ and $ A_{n,k\ell}$ to denote the $(k, \ell)$-th element of matrices $A,A_n \in \R^{d \times d}$, respectively, for all $k, \ell \in [d]$. Furthermore, we denote by $\diag(\v)$ a matrix in $\R^{d \times d}$ whose main diagonal is the same as the vector $\v$ and off-diagonal elements are zero, for some vector $\v \in \R^d$. With a slight abuse of notation, for a matrix $M \in \R^{d \times d}$, we also let $\diag(M)$ denote a  $\R^{d \times d}$ diagonal matrix with same diagonal elements as matrix $M$. We use the standard Loewner order notation $A \succeq 0$ if a matrix $A$ is positive semi-definite.
We use $\tth^\rm$ and $\tth^\kw$ to denote the iterates generated by the {\tt{(RM)}} scheme and the {\tt{(KW)}} scheme, respectively. We use $\widehat\tth^{\mathtt{(ERM)}}$ for the offline empirical risk minimizer, i.e.,
$\widehat\tth^{\mathtt{(ERM)}} = \mathrm{argmin}_{\tth} \frac{1}{n} \sum_{i=1}^n f(\tth; \zet_i)$. As we focus on the {\kw} scheme in this paper, we sometimes omit the superscript $\kw$ in the estimator to make room for the other notations.
In derivations of the {\kw} estimator, we denote the finite difference of $f(\cdot)$ as,
\begin{align}\label{eq:delta}
\Delta_{h, \v}f(\tth; \zet) =f(\tth + h \v; \zet) - f(\tth; \zet),
\end{align}
for some spacing parameter $h \in \R_{+}$ and search vector $\v \in \R^d$. We use $\E_{n}$ to denote the conditional expectation with respect to the natural filtration, i.e.,
\begin{align*}
\E_n[\tth_{n+1}] := \E [\tth_{n+1} | \mathcal{F}_n], \; \; \mathcal{F}_{n}:=\sigma \{\tth_k, \zet_k| k\leq n\}.
\end{align*}
We use the $\O(\cdot)$ notation to hide universal constants independent of the sample size $n$.

The remainder of the paper is organized as follows. In Section \ref{sec:KW}, we describe the Kiefer-Wolfowitz algorithm with random search directions along with three illustrative examples of the classical regression problems. We also provide a technical lemma to characterize the limiting behavior of the {\kw} gradient, which leads to the distributional constraint of the random direction vector. In Section \ref{sec:clt}, we first introduce the technical assumptions before we present the finite-sample rate of convergence of the {\kw} estimator. We further provide the asymptotic distribution of the {\akw} estimator, accompanied by discussions on the statistical (in)efficiency. We highlight a comparison of the choices of the direction distributions in Section \ref{sec:choice_dir}, and further extend the theoretical analysis to multi-query settings of the {\kw} algorithm in Section \ref{sec:multiple}. Section \ref{sec:nonsmooth} generalizes our analysis to some specific nonsmooth loss functions, such as the quantile regression. Based on the established asymptotic distribution results, we propose two types of online statistical inference procedures in Section \ref{sec:infer}. A functional extension of the distributional analysis of {\kw} as a stochastic process is also provided.  
Numerical experiments in Section \ref{sec:exp} lend empirical support to our theory. All proofs are relegated to the Appendix.

\section{Kiefer-Wolfowitz Algorithm}
\label{sec:KW}
In this section, we introduce the general form of the Kiefer-Wolfowitz {\kw} gradient estimator and the corresponding iterative algorithm 
$\tth_{n}=\tth_{n-1}-\eta_n \widehat{g}(\tth_{n-1};\zet_n)$.
In the seminal work  by \cite{Blum1954}, the {\kw} gradient estimator $\widehat{g}(\tth_{n-1};\zet_n)$ is constructed by approximating the stochastic gradient $g(\tth_{n-1};\zet_n)$ using the  canonical basis of $\mathbb{R}^d$, $\{\e_1,\e_2,\dots,\e_d\}$, as search directions.
In particular, given any $\tth\in\R^d$ and $\zet\sim \mathcal{P}_{\zet}$, the $k$-th coordinate of the {\kw} gradient estimator
\begin{align}\label{eq:hatge}
\big(\widehat{g}_{h,\e}(\tth;\zet)\big)_k=\frac{f(\tth+h\e_k;\zet)-f(\tth;\zet)}{h}, \qquad \text{for} \quad k=1,2,\ldots, d,
\end{align}
where $h$ is a spacing parameter for approximation. At each iteration, \eqref{eq:hatge} queries $d+1$ function values from $d$ fixed directions $\{\e_k\}_{k=1}^d$. To reduce the query complexity, a random difference becomes a natural choice. \cite{koronacki1975random} introduced a random version of the {\kw} algorithm using a sequence of random unit vectors that are independent and uniformly distributed on the unit sphere or unit cube. \cite{spall1992multivariate} also provided a random direction version of the {\kw} algorithm, named as the simultaneous perturbation stochastic approximation (SPSA) algorithm and later extended to several variants \cite{chen1999kiefer,spall2000adaptive,he2003convergence}. These random direction methods can reduce the bias in gradient estimates as compared to their non-random counterparts. In the following, we write the {\kw} algorithm with general random search directions, as in \eqref{eq:v},
\begin{align}
\notag \tth_n &=\tth_{n-1}-\eta_n\widehat{g}_{h_n,\v_n}(\tth_{n-1};\zet_n),\\
&\text{where } \widehat{g}_{h, \v}(\tth;\zet) := \frac{1}{h} \Delta_{h, \v}f(\tth; \zet)  \v =\frac{f(\tth+h\v;\zet)-f(\tth;\zet)}{h} \v.\label{eq:hatgv}
\end{align}
Here $\{\v_n\}$ is sampled from an underlying distribution $\Pv$ satisfying certain conditions (see Assumption \ref{assumption4} in Section \ref{sec:clt}). {\label{page:samplingvn}At each iteration $n$, the algorithm samples a direction vector $\v_n$ independently from $P_{\v}$,} and makes two solitary function-value queries, $f(\tth_{n-1}; \zet_n)$ and $f(\tth_{n-1}+h_n\v_n;\zet_n)$.
{\label{page:newpara_m}
We refer to the {\kw} gradient estimator $\widehat{g}_{h_n, \v_n}(\tth_{n-1},\zet_n)$ in \eqref{eq:hatgv} as a \emph{two-query} finite-difference approximation of the stochastic gradient. 
If one is allowed to make additional function-value queries, an averaging of the function values from multiple directions generates a \emph{multi-query} stochastic gradient estimator with reduced variance. In particular, at each iteration $n$, the practitioner makes $m+1$ queries $\{f(\tth_{n-1};\zet_n),f(\tth_{n-1}+h_n\v_n^{(j)};\zet_n)\}_{1\leq j\leq m}$ via $m$ random directions $\big\{\v_n^{(j)}\big\}$ sampled from $\Pv$. If $\Pv$ is a finite distribution, practitioners may choose to sample \emph{with} or \emph{without replacement}.
In summary, an ($m+1$)\emph{-query} {\kw} algorithm constructs a stochastic gradient estimator
\begin{equation}\label{eq:Avg-RandGradEst}
\overline{g}_{n}^{(m)}(\tth_{n-1}; \zet_n)=\frac1m\sum_{j=1}^m \widehat{g}_{h_n,\v_n^{(j)}}(\tth_{n-1}; \zet_n)= \frac{1}{m h_n}\sum_{j=1}^m \Delta_{h_n,\v_n^{(j)}}f(\tth_{n-1}; \zet_n) \v_n^{(j)},
\end{equation}
at each iteration $n$, and updates $\tth_{n}=\tth_{n-1}-\eta_n \overline{g}_{n}^{(m)}(\tth_{n-1}; \zet_n)$. Here we restrict the procedure to sampling from the same distribution $\Pv$ independently across different iterations.
We use $\tth_n^{(m)}$ to denote the final {\kw} estimator using the above $(m+1)$-query finite-difference approximation.

We now provide some illustrative examples of the two-query {\kw} estimator $\widehat{g}_{h_n, \v_n}$ in \eqref{eq:hatgv} used in popular statistical models, and we will refer to these examples throughout the paper. A multi-query extension of the examples can be constructed accordingly.
}

\begin{example}[Linear Regression]\label{ex1}
Consider a linear regression model
$y_i = \x_i^\top \tth^\star + \epsilon_i$ where $\{\zet_i=(\x_i,y_i),~i=1,2,\dots, n\}$ is an {i.i.d.} sample of $\zet=(\x,y)$ and the noise  $\epsilon_i \sim \N(0,\sigma^2)$. We use a quadratic loss function $f(\tth;\zet)=(y - \x^\top \tth)^2$. Therefore, the  stochastic gradient $\nabla f(\tth;\zet)=\left(\x^\top\tth-y\right)\x$, 
and the {\kw} gradient estimator $\widehat{g}_{h, \v}(\tth; \{\x,y\})$ in \eqref{eq:hatgv} becomes
\begin{align*}
\widehat{g}_{h, \v}(\tth; \{\x,y\})&=\frac{1}{h} \left[\big(y-\x^\top(\tth+h\v)\big)^2-\big(y-\x^\top\tth\big)^2\right] \v= 2\v\v^\top \big(\x^\top\tth-y\big)\x+h(\x^\top\v)^2\v.
\end{align*}
\end{example}

\begin{example}[Logistic Regression]\label{ex2}
Consider a logistic regression model with a binary response $y_i \in \{-1,1\}$ generated by 
$ \Pr(y_i|\x_i) =\left(1+\exp\left(-y_i \x_i^\top\tth^\star\right)\right)^{-1}$.
The individual loss function $f(\tth;\zet)=\log\left(1+\exp(-y\x^\top \tth)\right)$. The stochastic gradient $\nabla f(\tth;\zet)=-y \x\left(1+\exp(y\x^\top \tth)\right)^{-1}$, and the {\kw} gradient estimator $\widehat{g}_{h, \v}(\tth; \{\x,y\})$ in \eqref{eq:hatgv} becomes
\begin{align*}
\widehat{g}_{h, \v}(\tth; \{\x,y\})&=\frac{\v}{h} \left[\log\big(1+\exp(-y\x^\top (\tth+h\v))\big)-\log\big(1+\exp(-y\x^\top \tth)\big)\right]\\
&=\frac{-y\v\v^\top\x}{1+\exp(y\x^\top \tth)}+\frac{y^2(\x^\top\v)^2  \exp( y\x^\top\tth )h\v}{2(1+\exp(y\x^\top \tth ) )^2}+\O(h^2),\; \; \text{as }h\rightarrow 0_{+},
\end{align*}
under some regularity conditions on $\tth$ and the distribution of $\x$.
\end{example}

We note that for the {\rm} scheme with differentiable loss functions,  the stochastic gradient is an unbiased estimator of the population gradient under very mild assumption, i.e., $\E_{\zet} g(\tth; \zet) = \nabla F(\tth)$. In contrast, the {\kw} gradient estimator is no longer an unbiased estimator of $\nabla F(\tth)$. In the following lemma, we precisely quantifies the bias incurred by the {\kw} gradient estimator.

\begin{lemma}\label{lem:bv}
We assume that the population loss function $F(\cdot)$ is twice continuously differentiable and $L_f$-smooth, i.e., $\nabla^2 F(\tth) \preceq L_f I_d$ for any  $\tth \in \R^d$. Given any fixed parameter $\tth\in\mathbb{R}^d$, suppose the random direction vector $\v$ is independent from $\zet$, we have 
\begin{align*}
\left\| \E \; \widehat{g}_{h, \v}(\tth; \zet) - \nabla F(\tth) \right\| &\leq \left\| \E \big(\v \v^\top -I_d\big) \nabla F(\tth)\right\| + \frac{h}{2} L_f \E \| \v \|^3\notag,
\end{align*}
where the expectation in $\E \; \widehat{g}_{h, \v}(\tth; \zet)$ takes over both the randomness in $\v$ and $\zet$.
\end{lemma}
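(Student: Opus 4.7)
The plan is to decompose the bias into two pieces: one coming from the non-isotropy of $\v$ (which survives even for linear $F$), and one coming from the curvature error of the finite-difference approximation (which is the usual Taylor remainder).

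First, I would use the independence of $\v$ and $\zet$ together with Fubini to reduce to the population loss:
\begin{equation*}
\E\,\widehat{g}_{h,\v}(\tth;\zet) \;=\; \E_\v\!\left[\frac{F(\tth+h\v)-F(\tth)}{h}\,\v\right].
\end{equation*}
Then, since $F$ is twice continuously differentiable, I would Taylor-expand $F$ around $\tth$ to second order with the integral (or Lagrange) remainder, obtaining
\begin{equation*}
F(\tth+h\v)-F(\tth) \;=\; h\,\v^\top\nabla F(\tth) + \tfrac{h^2}{2}\,\v^\top\nabla^2 F(\tilde\tth)\,\v,
\end{equation*}
for some $\tilde\tth$ on the segment between $\tth$ and $\tth+h\v$. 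Dividing by $h$ and multiplying by $\v$ gives the clean identity
\begin{equation*}
\E\,\widehat{g}_{h,\v}(\tth;\zet) \;=\; \bigl(\E\,\v\v^\top\bigr)\,\nabla F(\tth) + \tfrac{h}{2}\,\E_\v\!\bigl[\,\v^\top\nabla^2 F(\tilde\tth)\v\,\bigr]\v.
\end{equation*}

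Subtracting $\nabla F(\tth)=I_d\,\nabla F(\tth)$ and applying the triangle inequality produces the two target terms. The first term is already exactly $\|\E(\v\v^\top - I_d)\nabla F(\tth)\|$. For the second, I would push the norm inside via Jensen, then use the smoothness assumption $\nabla^2 F(\tilde\tth)\preceq L_f I_d$ (and symmetry to get $|\v^\top\nabla^2 F(\tilde\tth)\v|\leq L_f\|\v\|^2$) to conclude
\begin{equation*}
\left\|\tfrac{h}{2}\,\E_\v\!\bigl[\v^\top\nabla^2 F(\tilde\tth)\v\bigr]\v\right\| \;\leq\; \tfrac{h}{2}\,\E_\v\bigl(L_f\|\v\|^2\bigr)\|\v\| \;=\; \tfrac{h}{2}L_f\,\E\|\v\|^3,
\end{equation*}
which matches the stated bound.

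There is really no hard step here; the only mild subtlety is that $\tilde\tth$ depends on $\v$, so one cannot pull $\nabla^2 F(\tilde\tth)$ outside the expectation. Bounding it uniformly by $L_f I_d$ before taking expectations handles this cleanly. The use of $\nabla^2 F \preceq L_f I_d$ (rather than $\|\nabla^2 F\|\leq L_f$) is exactly what allows this uniform bound.
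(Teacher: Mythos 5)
Your proof is correct and follows essentially the same route as the paper's: reduce to the population loss by independence, Taylor-expand $F$ to second order, and bound the remainder by $\tfrac{h}{2}L_f\E\|\v\|^3$. If anything you are slightly more careful than the paper's displayed derivation, which silently drops the $\E(\v\v^\top-I_d)\nabla F(\tth)$ term (valid only when $\E\v\v^\top=I_d$), whereas you retain it via the triangle inequality; the only nitpick is that $|\v^\top\nabla^2 F(\tilde\tth)\v|\le L_f\|\v\|^2$ also uses $\nabla^2 F\succeq 0$, which holds here since $F$ is convex.
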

The proof of Lemma~\ref{lem:bv} is provided in Section~\ref{sec:app-a} of the Appendix. 
To reduce the bias in the {\kw} gradient, Lemma~\ref{lem:bv} indicates that one should choose the random direction $\v_n$ that satisfies the distributional constraint $\E[\v_n\v_n^\top]=I_d$ (see Assumption \ref{assumption4} in Section \ref{sec:clt}). We will further conduct a comprehensive analysis in Section \ref{sec:choice_dir} on different choices of distributions $\Pv$ satisfying the condition $\E[\v_n\v_n^\top]=I_d$. Despite the existence of the bias, as  the spacing parameter $h_n \rightarrow 0$, the bias  convergences to zero asymptotically.

\section{Theoretical Results}
\label{sec:clt}
We first introduce some regularity assumptions on the population loss $F(\tth)$ and the individual loss $f(\tth; \zet)$. 

{\red \begin{assumption}
\label{assumption1p}
The population loss function $F(\tth)$ is twice continuously differentiable, convex, and $L_f$-smooth, i.e., $0 \preceq \nabla^2 F(\tth) \preceq L_f I_d$ for any  $\tth \in \R^d$. In addition, there exist $\delta_1,\lambda > 0$, such that, $\nabla^2 F(\tth) \succeq \lambda I_d$ for any $\tth$ in the $\delta_1$-ball centered at $\tth^\star$.
\end{assumption}}
\begin{assumption}\label{assumption2}
Assume $\E \left[\nabla f(\tth;\zet_n)\right]=\nabla F(\tth)$ for any $\tth \in \R^d$. 
Moreover, for some $0 < \delta \leq 2$, there exists $M>0$ 
such that
$
\E \|\nabla f(\tth;\zet_n) - \nabla F(\tth)\|^{2+\delta} \leq M\big(\|\tth - \tth^\star\|^{2+\delta}+ 1\big).
$
\end{assumption}
\begin{assumption} \label{assumption3}
There are constants $L_h, L_p>0$ such that for any $\tth, \y \in \R^d$,
\begin{align*}
\E \left\| \nabla^2f(\tth; \zet_n) - \nabla^2 f(\y; \zet_n)\right\|^2 \leq L_h \|\tth-\y\|^{2},\qquad
\E\left\| [\nabla^2 f(\tth^\star; \zet_n)]^2  - H^2 \right\| \leq L_p,
\end{align*}
where $H$ is the Hessian matrix of the population loss function $F(\cdot)$, i.e.,
$H=\nabla^2 F(\tth^\star)$.
\end{assumption}
\begin{assumption} \label{assumption4}
We adopt {i.i.d.} random direction vectors $\{\v_n\}$ from some common distribution $\v \sim \Pv$ such that $\E[\v\v^\top]=I_d$. Moreover, assume that the $(6+3\delta)$-th moment of $\v$ is bounded.
\end{assumption}

We discuss the above assumptions and compare them with the standard conditions in the literature of {\rm}-type SGD inference. 
{\red Assumption~\ref{assumption1p} assumes the population loss function $F(\cdot)$ to be $\lambda$-strongly convex in a $\delta_1$ neighborhood of the true parameter $\tth^\star$, which is often referred to as \emph{local strong convexity} assumption widely used in many existing literature of statistical inference on {\rm}-type stochastic optimization (e.g., \citealp{polyak1992acceleration}). This condition is satisfied in the settings of linear and logistic regression (Examples \ref{ex1}--\ref{ex2}) with classical design assumptions on the covariates $\x$.}  Assumption~\ref{assumption2} introduces the unbiasedness condition on the stochastic gradient $\nabla f(\tth; \zet)$ when the individual loss function $f(\tth; \zet)$ is smooth. 
The $(2+\delta)$-th moment condition is the classical Lyapunov condition used in the derivation of asymptotic normality. 
The statements in Assumption~\ref{assumption3} introduce the Lipschitz continuity condition and the concentration condition on the Hessian matrix. Relaxation to Assumptions~\ref{assumption2} and \ref{assumption3} can be made to handle some nonsmooth loss functions $f(\tth; \zet)$, such as the quantile regression as described in Section \ref{sec:nonsmooth} below. 
Assumption~\ref{assumption4} guarantees that the {\kw} gradient $\widehat{g}_{h, \v}(\tth; \zet)$ is an asymptotically unbiased estimator of $\nabla F(\tth)$ as the spacing parameter $h_n\rightarrow0$, as suggested by Lemma~\ref{lem:bv}.  
We provide several examples of $\Pv$ in Section \ref{sec:choice_dir}.

Before we derive the asymptotic distribution for {\akw}, we first provide a consistency result and finite sample error bound for the final {\kw} iterate $\tth_n$: 
\begin{restatable}{proposition}{consist}
	\label{lemma:x}
	 Assume Assumptions \ref{assumption1p}, \ref{assumption2}, and \ref{assumption4} hold. Set the step size as $\eta_n = \eta_0 n^{-\alpha}$ for some constant $\eta_0>0$ and $\alpha\in \left(\frac{1}{2}, 1\right)$ and the spacing parameter as $h_n = h_0 n^{-\gamma}$ for  constant $h_0 > 0$, and $\gamma \in \left(\frac{1}{2}, 1\right)$. The {\kw} iterate $\tth_n$ converges to $\tth^\star$ almost surely. 
	
Additionally, assume Assumptions~\ref{assumption1p} holds with $\delta_1=+\infty$, i.e., the population loss function $F(\tth)$ is globally $\lambda$-strongly convex and $L_f$-smooth. For sufficiently large $n$, we have, $\E\| \tth_n - \tth^\star\|^{2+\delta} \leq C n^{-\alpha(2+\delta)/2}$,	 where 
  the constant $C$ depends on $d, \lambda,  L_f,\alpha, \gamma,$ $\eta_0, h_0$.
\end{restatable}
{The proof of Proposition \ref{lemma:x} and the explicit dependency of the constant $C$ on the parameters and the initial value $\tth_0$ are provided in Remark \ref{rmk:bach} of the Appendix.} A similar error bound on the parameter $\tth$ is given by \cite{duchi2015optimal} in terms of the function values for $\delta=0$. We provide an error bound for the $(2+\delta)$-moment under our assumption, where $\delta\in(0,2]$ is assumed in Assumption \ref{assumption2}. Proposition \ref{lemma:x} suggests that the asymptotic rate of the {\kw} estimator matches the best convergence rate of the {\rm} estimator \citep{bach2011non} when the spacing parameter $h_n = h_0 n^{-\gamma}$ is a decreasing sequence with $\gamma\in(\frac12,1)$.  

Recall that to characterize the asymptotic behavior of {\rm} iterates, we  denote by $S$, the Gram matrix of $\nabla f(\tth; \zet)$ at the true parameter $\tth^\star$, i.e., $S := \E \left[\nabla f(\tth^\star; \zet) \nabla f(\tth^\star; \zet)^\top \right]$. Analogously, we define the limiting Gram matrix of the {\kw} gradient estimator $\widehat{g}_{h,\v}$ at $\tth^\star$ as $h\rightarrow 0$ to be $Q$. The following lemma proves that the limiting Gram matrix takes the form of $Q=\E \left[\v \v^\top S \v \v^\top\right]$, and it quantifies the distance between $\widehat{g}_{h,\v}(\tth^\star; \zet) \widehat{g}_{h,\v}(\tth^\star; \zet)^\top$ and $Q$, as the spacing parameter $h\rightarrow 0$.

\begin{lemma}\label{lem:variance}
Under Assumptions \ref{assumption1p}, \ref{assumption2}, \ref{assumption3}, and \ref{assumption4}, we have
\begin{align*}
\left\|\E \big[\widehat{g}_{h,\v}(\tth^\star; \zet) \widehat{g}_{h,\v}(\tth^\star; \zet)^\top\big] - Q\right\| \leq C h(1+h^2), \; \;
Q=\E \big[\v \v^\top S \v \v^\top\big].
\end{align*}
where $S = \E \left[\nabla f(\tth^\star; \zet) \nabla f(\tth^\star; \zet)^\top \right]$ is defined in Assumption \ref{assumption2}.
\end{lemma}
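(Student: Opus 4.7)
The plan is to Taylor expand the finite-difference numerator around $\tth^\star$, isolate the leading-order term (which reproduces $Q$ in expectation exactly), and then bound the remainder terms in powers of $h$ using Assumptions~\ref{assumption1}--\ref{assumption4}.

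First I would write, via Taylor's theorem with integral remainder,
\begin{equation*}
f(\tth^\star+h\v;\zet)-f(\tth^\star;\zet) \;=\; h\,\v^\top \nabla f(\tth^\star;\zet) + h^2\,\v^\top R(h,\v,\zet)\,\v,
\end{equation*}
where $R(h,\v,\zet):=\int_0^1 (1-t)\,\nabla^2 f(\tth^\star+th\v;\zet)\,\mathrm{d}t$. Dividing by $h$ and multiplying by $\v$ yields $\widehat g_{h,\v}(\tth^\star;\zet) = T\v + hU\v$ where the scalars are $T:=\v^\top\nabla f(\tth^\star;\zet)$ and $U:=\v^\top R\v$. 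Expanding the outer product gives
\begin{equation*}
\widehat g_{h,\v}\widehat g_{h,\v}^\top \;=\; T^2\,\v\v^\top \;+\; 2hTU\,\v\v^\top \;+\; h^2 U^2\,\v\v^\top.
\end{equation*}

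Next, for the leading term, independence of $\v$ and $\zet$ together with $\nabla F(\tth^\star)=0$ and $S=\E[\nabla f(\tth^\star;\zet)\nabla f(\tth^\star;\zet)^\top]$ gives
\begin{equation*}
\E\bigl[T^2\,\v\v^\top\bigr]=\E_{\v}\bigl[\v\v^\top\,\E_{\zet}[\nabla f\,\nabla f^\top]\,\v\v^\top\bigr]=\E_{\v}[\v\v^\top S\,\v\v^\top]=Q,
\end{equation*}
so the error reduces to controlling the spectral norm of $2h\,\E[TU\,\v\v^\top]+h^2\,\E[U^2\,\v\v^\top]$. For each remainder I would use $\|\E[X]\|\le\E\|X\|$, so it suffices to bound $\E[|TU|\,\|\v\|^2]$ and $\E[U^2\,\|\v\|^2]$. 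By Cauchy--Schwarz, the cross term is at most $2h\bigl(\E[T^2\|\v\|^2]\,\E[U^2\|\v\|^2]\bigr)^{1/2}$. Since $|T|\le\|\v\|\|\nabla f(\tth^\star;\zet)\|$ and $\v\perp\zet$, Assumptions~\ref{assumption2} and \ref{assumption4} give $\E[T^2\|\v\|^2]\le\E\|\v\|^4\cdot\E\|\nabla f(\tth^\star;\zet)\|^2<\infty$. Meanwhile $|U|\le\|R\|\|\v\|^2$, so I would bound $\E[U^2\|\v\|^2]\le\E[\|R\|^2\|\v\|^6]$ by conditioning on $\v$, using Jensen on the integral defining $R$ and the decomposition $\|\nabla^2 f(\tth^\star+th\v;\zet)\|^2\le 2\|\nabla^2 f(\tth^\star+th\v;\zet)-\nabla^2 f(\tth^\star;\zet)\|^2+2\|\nabla^2 f(\tth^\star;\zet)\|^2$, so that Assumption~\ref{assumption3} yields $\E_{\zet}\|R\|^2\le C_1 h^2\|\v\|^2+C_2$. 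Taking expectation over $\v$, the $(6+3\delta)$-th moment bound in Assumption~\ref{assumption4} controls $\E\|\v\|^6$ and $\E\|\v\|^8$. Collecting the contributions $O(h)$, $O(h^2)$, and $O(h^4)$ yields an aggregate bound of the form $C h(1+h^2)$.

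The main obstacle is the entanglement of $R$ with $\v$: because $R$ is evaluated at the random point $\tth^\star+th\v$, the quantity $\|R\|$ carries an extra factor of $h\|\v\|$ through the Lipschitz condition. This forces the analysis to peel off expectations in the right order (condition on $\v$ first, invoke Assumption~\ref{assumption3}, then integrate over $\v$ using Assumption~\ref{assumption4}) and to pay close attention to which power of $\|\v\|$ appears alongside each power of $h$, to guarantee all arising moments are finite and that the resulting polynomial in $h$ is dominated by $h(1+h^2)$.
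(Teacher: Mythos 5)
Your proof is correct in its architecture but takes a genuinely different route from the paper's. You Taylor-expand $f(\tth^\star+h\v;\zet)-f(\tth^\star;\zet)$ to second order in $h$, so that the $O(1)$ term of $\widehat g_{h,\v}\widehat g_{h,\v}^\top$ has expectation exactly $Q$ and the error is carried entirely by the integral-remainder Hessian $R$. The paper instead performs a bias--variance decomposition conditional on $\v$: it writes $\E_{\zet}[\widehat g\widehat g^\top]$ as the conditional covariance plus the squared conditional mean $(\frac1h\Delta_{h,\v}F(\tth)\v)(\frac1h\Delta_{h,\v}F(\tth)\v)^\top$, represents the centered part as $\frac1h\int_0^h\langle\nabla f(\tth+s\v;\zet)-\nabla F(\tth+s\v),\v\rangle\v\,\mathrm{d}s$, and compares the resulting double integral of the local Gram function $\Sigma(\tth+s_1\v,\tth+s_2\v)$ to $S$ via gradient-increment bounds. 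What the paper's route buys is a bound valid at \emph{arbitrary} $\tth$ (with explicit $\|\tth\|$ dependence), which is reused later as inequality \eqref{expectationoveru} with $\tth=\tth_{n-1}$ in the proofs of Theorem \ref{thm:clt} and Lemma \ref{lemma:Gram}; your argument, as written, is specific to $\tth^\star$ where $\nabla F(\tth^\star)=\0$, which suffices for the lemma itself but would need to be redone in the non-stationary form for those downstream uses. What your route buys is directness: the identification of $Q$ as the exact expectation of the leading term is immediate, with no need to introduce and perturb $\Sigma(\cdot,\cdot)$.

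One caveat on moment counting. Your bound on the $h^2U^2\v\v^\top$ term produces $h^4\,\E\|\v\|^8$ (the $h^2\|\v\|^2$ part of $\E_{\zet}\|R\|^2$ multiplied by $\|\v\|^6$), and you assert that Assumption \ref{assumption4} controls $\E\|\v\|^8$; it only guarantees the $(6+3\delta)$-th moment, which dominates the eighth moment only when $\delta\ge 2/3$. The paper's own bound tops out at $h^3\,\E\|\v\|^7$ and is similarly loose for $\delta<1/3$ (the authors acknowledge the moment condition is for technical simplicity), but your version is one moment order worse. You can repair this by bounding the cross term as $\E[|TU|\,\|\v\|^2]\le\E\big[\|\v\|^5(\E_{\zet}\|\nabla f\|^2)^{1/2}(\E_{\zet}\|R\|^2)^{1/2}\big]$ after conditioning on $\v$, and by keeping the $h$-dependent part of $\E_{\zet}\|R\|^2$ attached to its extra power of $h$ so that the highest moment of $\|\v\|$ appearing with a nonnegligible coefficient matches the paper's. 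This is a fixable imprecision, not a flaw in the approach.
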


With Lemma \ref{lem:variance} in place, we state our first main result that characterizes the limiting distribution of the averaged {\akw} iterates defined in \eqref{eq:akw}.

\begin{restatable}{theorem}{clt}
\label{thm:clt}
Let Assumptions \ref{assumption1p}, \ref{assumption2}, \ref{assumption3}, and \ref{assumption4} hold. Set the step size as $\eta_n = \eta_0 n^{-\alpha}$ for some constant $\eta_0>0$ and $\alpha\in \left(\frac{1}{2}, 1\right)$, and the spacing parameter as $h_n = h_0 n^{-\gamma}$ for some constant $h_0 > 0$, and $\gamma \in \left(\frac{1}{2}, 1\right)$.  The averaged {\kw} estimator $\overline\tth_n$ satisfies,
\begin{align}
\label{eq:clt}
\sqrt{n} \; \left(\overline{\tth}_n - \tth^\star\right) \Longrightarrow \N\left(\0, H^{-1} Q H^{-1}\right),  \qquad \text{as} \quad n\rightarrow \infty,
\end{align}
where $H=\nabla^2 F(\tth^\star)$ is the population Hessian matrix and $Q = \E \left[\v \v^\top S \v \v^\top\right]$ is defined in Lemma~\ref{lem:variance}. Here $\Longrightarrow$ represents the convergence in distribution.
\end{restatable}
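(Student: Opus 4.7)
The plan is to follow the classical Polyak-Ruppert averaging strategy, adapted to accommodate the biased, doubly-random nature of the Kiefer-Wolfowitz gradient estimator. The first step is a second-order Taylor expansion of $f(\tth_{n-1}+h_n\v_n;\zet_n)$, which together with Lemma~\ref{lem:bv} yields
\begin{align*}
\widehat{g}_{h_n,\v_n}(\tth_{n-1};\zet_n) = \v_n\v_n^\top \nabla f(\tth_{n-1};\zet_n) + \tfrac{h_n}{2}\bigl(\v_n^\top \nabla^2 f(\tilde\tth;\zet_n)\v_n\bigr)\v_n.
\end{align*}
Then I would expand $\nabla f(\tth_{n-1};\zet_n)$ around $\tth^\star$ using Assumption~\ref{assumption3}, and take conditional expectation given $\mathcal{F}_{n-1}$. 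Using $\E[\v\v^\top]=I_d$ (Assumption~\ref{assumption4}) and the independence of $\v_n$ from $\zet_n$, this gives $\E_{n-1}\widehat{g}_{h_n,\v_n}(\tth_{n-1};\zet_n) = H(\tth_{n-1}-\tth^\star) + \O(\|\tth_{n-1}-\tth^\star\|^2) + \O(h_n)$, so one can decompose $\widehat{g}_{h_n,\v_n}(\tth_{n-1};\zet_n)=H(\tth_{n-1}-\tth^\star)+\bxi_n+r_n$, where $\bxi_n$ is a martingale difference adapted to $\mathcal{F}_n$ and $r_n$ collects the finite-difference bias and the higher-order Taylor remainders.

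Substituting this decomposition into the recursion and solving for $H(\tth_{n-1}-\tth^\star)$ gives
\begin{align*}
H(\tth_{n-1}-\tth^\star) = \eta_n^{-1}\bigl[(\tth_{n-1}-\tth^\star)-(\tth_n-\tth^\star)\bigr]-\bxi_n-r_n.
\end{align*}
Summing from $i=1$ to $n$, applying Abel summation to the telescoping term (using $\eta_i=\eta_0 i^{-\alpha}$ with $\alpha\in(1/2,1)$), dividing by $\sqrt{n}$, and invoking Proposition~\ref{lemma:x} to bound $\E\|\tth_n-\tth^\star\|^{2+\delta}=\O(n^{-\alpha(2+\delta)/2})$, I obtain
\begin{align*}
\sqrt{n}\,H(\overline{\tth}_n-\tth^\star) = -\frac{1}{\sqrt{n}}\sum_{i=1}^n\bxi_i + o_P(1),
\end{align*}
reducing the claim to a martingale CLT for the $\bxi_i$ sum, followed by multiplication by $H^{-1}$.

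For the martingale CLT (Hall-Heyde type), I would verify two conditions. First, the predictable quadratic variation $\frac{1}{n}\sum_{i=1}^n\E_{i-1}[\bxi_i\bxi_i^\top]$ must converge in probability to $Q=\E[\v\v^\top S\v\v^\top]$; the leading part of $\bxi_n$ is $\v_n\v_n^\top\nabla f(\tth^\star;\zet_n)$, whose covariance is exactly $Q$ by Lemma~\ref{lem:variance}, while the perturbation from evaluating at $\tth_{n-1}$ rather than $\tth^\star$ is controlled by the Lipschitz continuity of $\nabla^2 f$ in Assumption~\ref{assumption3} and the a.s.\ convergence $\tth_n\to\tth^\star$. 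Second, the Lindeberg/Lyapunov condition follows from the $(2+\delta)$-moment bound in Assumption~\ref{assumption2} combined with the $(6+3\delta)$-moment bound on $\v$ in Assumption~\ref{assumption4}, which guarantee $\sup_i\E\|\bxi_i\|^{2+\delta}<\infty$ through Cauchy-Schwarz.

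The hard part will be the bookkeeping for the several distinct bias sources inherent to the KW scheme. Unlike the {\rm} case, the gradient estimator carries (i) an $\O(h_n)$ finite-difference bias from the Taylor remainder, (ii) a zero-mean but strongly coordinate-correlated fluctuation $(\v_n\v_n^\top-I_d)\nabla F(\tth_{n-1})$, and (iii) a cross term $\v_n\v_n^\top\nabla^2 f(\tth^\star;\zet_n)(\tth_{n-1}-\tth^\star)$ that mixes the search-direction randomness with the estimation error. Showing that each contributes $o_P(1)$ to the normalized sum is where the step-size and spacing conditions $\alpha,\gamma\in(1/2,1)$ together with Proposition~\ref{lemma:x} become essential: they ensure $\sum_{i=1}^n\eta_i\cdot(\text{bias})_i=o(\sqrt{n})$ after rearrangement, so that no bias term leaks into the asymptotic covariance $H^{-1}QH^{-1}$.
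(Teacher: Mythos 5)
Your proposal is correct and follows essentially the same route as the paper: the Polyak--Juditsky averaging argument with a drift--martingale--bias decomposition of the {\kw} update, Proposition~\ref{lemma:x} controlling the rates so the telescoped and bias terms are $o_P(\sqrt{n})$, Lemma~\ref{lem:variance} identifying the limiting quadratic variation $Q$, and a martingale CLT whose Lindeberg condition is verified through the $(2+\delta)$-moment assumptions. The only organizational difference is that the paper separates at the outset the direction-induced fluctuation $(\v_n\v_n^\top-I_d)\nabla F(\tth_{n-1})$ (its $\bgamma_n$, which vanishes since $\nabla F(\tth_{n-1})\to \0$) from the data-noise martingale (its $\bvarepsilon_n$, which alone carries the limit), whereas you fold both into a single martingale difference before arguing the former contributes nothing to the asymptotic covariance.
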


We now compare the asymptotic covariance matrix of $\overline{\tth}_n$ with that of the {\rm} counterpart in \eqref{eqpolyak} \footnote{Note that the asymptotic covariance $H^{-1} S H^{-1}$ in \eqref{eqpolyak} is ``optimal'' in the sense that it matches the asymptotic covariance for the empirical risk minimizer $\widehat\tth^{(\mathtt{ERM})}$ without online computation and gradient information constraint.}.  As one can see, the asymptotic covariance matrix of {\akw} estimator $\overline\tth_n$ exhibits a similar sandwich form as the covariance matrix of {\rm}, but strictly dominates the latter, regardless of the choice of random direction vectors $\{\v_1,\v_2,\dots, \v_n\}$.  In fact, it is easy to check that
\begin{align}\label{eq:dominate}
	H^{-1}QH^{-1} - H^{-1}SH^{-1} = H^{-1}\E_{\v} \left[(\v \v^\top - I_d) S (\v \v^\top - I_d)\right] H^{-1}\succ 0,
\end{align}
which suggests the {\akw} estimator suffers an inevitable loss of efficiency compared to the $\widehat\tth^{\rm}$. 
{ In Section \ref{sec:multiple}, we analyze {\akw} with multiple function-value queries at each iteration. With the price of additional per-iteration computational complexity, one is able to improve the statistical efficiency of {\akw} and achieve the optimal asymptotic variance $H^{-1}SH^{-1}$.}

\begin{remark}
	\label{rem:kw}
	 To complete the distributional analysis on {\kw} iterates, we also provide the asymptotic distribution of the $n$-th iterate $\tth_n^{\kw}$ of  \eqref{eq:kw} without averaging. Assume the Hessian matrix has decomposition $	H = P \Lambda P^\top$, where $P$ is an orthogonal matrix and  $\Lambda$ is a diagonal matrix. Using the proof in \cite{fabian1968asymptotic}, we establish the following asymptotic distribution for $\tth_n^{\kw}$,
	\begin{align}\label{eq:dist_nonavg}
		n^{\alpha/2} (\tth_n^{\kw} - \tth^\star) \Longrightarrow \mathcal{N}(0, \Sigma),
	\end{align}
	where each $(k,\ell)$-th entry of the covariance matrix $\Sigma$ is,
	\begin{align*}
		\Sigma_{k \ell} = \eta_0 \big(P^\top Q P\big)_{kl} \big(\Lambda_{kk}+\Lambda_{\ell \ell}\big)^{-1},\quad 1\leq k,\ell\leq d.
	\end{align*}
	Here $\eta_0>0$ and $\alpha \in (\frac12, 1)$ are specified in the step size $\eta_n = \eta_0 n^{-\alpha}$. As $\alpha<1$, the $n$-th iterate  $\tth_n^{\kw}$ without averaging converges at a slower rate $n^{-\alpha/2}$ than that of {\akw} in Theorem \ref{thm:clt}. 
\end{remark}

\subsection{Examples: choices of direction distribution}
\label{sec:choice_dir}

By Theorem \ref{thm:clt}, the asymptotic covariance matrix of {\akw} estimator, $H^{-1}QH^{-1}$, depends on the distribution of search direction $\Pv$ via $Q=\E[\v\v^\top S\v\v^\top]$. In this section, we 
compare the asymptotic covariance matrices of the {\akw} estimator when the random directions $\{\v_i\}_{i=1}^n$ are sampled from different $\Pv$'s. Several popular choices of $\Pv$ are listed as follows,
\begin{enumerate}
\item[\tg\label{tg}] Gaussian: $\v \sim \mathcal{N}(0, I)$.
\item[\ts\label{ts}] Spherical: $\v$ is sampled from the uniform distribution on the sphere $\|\v\|^2 = d$.
\item[\ti\label{ti}] Uniform in the canonical basis: $\v$ is sampled from $\big\{\sqrt{d} \e_1, \sqrt{d}\e_2, \ldots, \sqrt{d}\e_d\big\}$ with equal probability, where $\left\{\e_1,\e_2,\dots, \e_d\right\}$ is the canonical basis of $\R^d$.
\end{enumerate}

It is easy to verify that the above three classical choices of $\Pv$ satisfy Assumption \ref{assumption4}, {among which {\tg} and {\ts} are  continuous distributions, while {\ti} is a discrete distribution. In particular, {\ti} is a discrete uniform distribution with equal probability among the $d$ vectors of the standard basis of Euclidean space $\R^n$, which can be generalized in the following two forms.}
\begin{enumerate}
\item[\tu\label{tu}] Uniform in an arbitrary orthonormal basis $U$: $\v_i$ is sampled uniformly from $\left\{\sqrt{d}\u_1, \sqrt{d}\u_2, \right.$ $\left. \ldots,\sqrt{d}\u_d\right\}$, where $\left\{\u_1,\u_2,\dots, \u_d\right\}$ is an arbitrary \emph{orthonormal basis} of $\R^d$, i.e., the matrix $U=(\u_1,\u_2,\dots, \u_d)$ is a $d \times d$ orthonormal matrix such that $UU^\top=U^\top U=I$.
\item[\tp\label{tp}] Non-uniform in the canonical basis with probability $(p_1,p_2,\dots,p_d)$: $\v = \sqrt{1/p_k} \, \e_k$ with probability $p_k > 0$, for $k \in [d]$ and $\sum_{k=1}^d p_k = 1$.
\end{enumerate}

The following proposition provides expressions of the matrix $Q$ for the above five choices of $\Pv$. 
\begin{proposition}
\label{cor:clt0}
Under the assumptions in Theorem \ref{thm:clt}, for above examples of $\Pv$,
\begin{enumerate}[(i)]
\item[{\hyperref[tg]{\tg}}] Gaussian: $\Qg = \left(2S+ \tr(S)I_d\right)$.
\item[{\hyperref[ts]{\ts}}] Spherical: $\Qs = \frac{d}{d+2}\left(2S+\tr(S)I_d\right)$.
\item[{\hyperref[ti]{\ti}}] Uniform in the canonical basis: $\Qi = d \, \diag(S)$.
\item[{\hyperref[tu]{\tu}}] Uniform in an arbitrary orthonormal basis $U$: $\Qu=d\,U\diag(U^\top S U) U^\top$.
\item[{\hyperref[tp]{\tp}}] Non-uniform in a natural coordinate basis: $\Qp = \diag(S_{1 1} / p_1, S_{2 2}/p_2, \ldots, S_{d d}/p_d)$.
\end{enumerate}
\end{proposition}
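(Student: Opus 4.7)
The plan is to compute $Q = \E[\v\v^\top S\v\v^\top]$ directly for each of the five distributions. Cases (I), (U), and (P) are discrete and essentially immediate: conditional on the support point, $\v\v^\top$ is a rank-one projector (times a scalar), so $\v\v^\top S\v\v^\top$ collapses to a scalar times a rank-one matrix. Case (G) requires a fourth-moment identity for Gaussian vectors, and case (S) reduces to (G)-style algebra once the correct proportionality constant is pinned down.

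For the discrete cases I would start with \hyperref[tu]{\tu}, since it specializes to the other two. Conditional on $\v = \sqrt{d}\u_k$, one has $\v\v^\top = d\,\u_k\u_k^\top$ and hence $\v\v^\top S\v\v^\top = d^2(\u_k^\top S\u_k)\u_k\u_k^\top$. Averaging with uniform weight $1/d$ over $k\in[d]$ yields
\begin{align*}
\Qu = d\sum_{k=1}^d (\u_k^\top S\u_k)\,\u_k\u_k^\top = d\,U\diag(U^\top S U)U^\top.
\end{align*}
Specializing to $U=I_d$ gives $\Qi = d\,\diag(S)$. Case \hyperref[tp]{\tp} is the same calculation with the support point $\sqrt{1/p_k}\,\e_k$ occurring with probability $p_k$; the factor $1/p_k^2$ from $\v\v^\top\otimes\v\v^\top$ times the weight $p_k$ leaves a single $1/p_k$, producing the claimed diagonal matrix $\diag(S_{11}/p_1,\dots,S_{dd}/p_d)$.

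For the Gaussian case \hyperref[tg]{\tg} I would compute entrywise: $(\Qg)_{ij} = \sum_{k,\ell} S_{k\ell}\,\E[v_i v_j v_k v_\ell]$, and invoke Isserlis' theorem to get $\E[v_iv_jv_kv_\ell] = \delta_{ij}\delta_{k\ell} + \delta_{ik}\delta_{j\ell} + \delta_{i\ell}\delta_{jk}$. Plugging in and using $S = S^\top$ collapses the sum to $\delta_{ij}\tr(S) + 2S_{ij}$, giving $\Qg = 2S + \tr(S)I_d$. For the spherical case \hyperref[ts]{\ts}, rotational invariance forces the fourth-moment tensor to have the identical symmetric form $\E[v_iv_jv_kv_\ell] = c(\delta_{ij}\delta_{k\ell} + \delta_{ik}\delta_{j\ell} + \delta_{i\ell}\delta_{jk})$, so the same algebra gives $\Qs = c(2S + \tr(S)I_d)$; it remains only to pin down $c$ by taking $i=j=k=\ell=1$ and computing $\E[v_1^4]$ on the sphere $\|\v\|^2 = d$.

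No step is genuinely hard; the only place that requires care is identifying $c = d/(d+2)$ in \hyperref[ts]{\ts}. I would do this by writing $\v \stackrel{d}{=} \sqrt{d}\,\z/\|\z\|$ with $\z\sim\N(0,I_d)$, so that $v_1^2 \stackrel{d}{=} d\,z_1^2/\|\z\|^2$ has a scaled $\mathrm{Beta}(1/2,(d-1)/2)$ distribution with $\E[v_1^2]=1$ and $\E[v_1^4] = 3d/(d+2)$; equating $\E[v_1^4] = 3c$ gives the constant. All five expressions then follow from substitution, completing the proof.
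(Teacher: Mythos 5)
Your proposal is correct and follows essentially the same route as the paper: direct computation of $\E[\v\v^\top S\v\v^\top]$, with Gaussian fourth-moment identities for \hyperref[tg]{\tg}, a reduction of the sphere to the Gaussian via $\v \stackrel{d}{=} \sqrt{d}\,\z/\|\z\|$ for \hyperref[ts]{\ts}, and rank-one collapse for the three discrete cases. The only cosmetic difference is that you identify the spherical constant by computing $\E[v_1^4]$ through a Beta moment, whereas the paper transfers the Gaussian result directly using the independence of radius and direction together with $\E\|\z\|^4 = d(d+2)$.
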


From Proposition \ref{cor:clt0}, one can see that any of the above choices of $\Pv$ leads to a $Q^{(\cdot)}$ that strictly dominates $S$. 
Take $S=I_d$ as an example, we have $\Qg=(d+2)I_d$ and $\Qs=\Qi=\Qu=d I_d$ and $\Qp=\diag(p_1^{-1},p_2^{-1},\dots, p_d^{-1})\succ I_d$ where $p_1+p_2+\dots+p_d=1$. {Several additional findings and implications of Proposition \ref{cor:clt0} are discussed in Section \ref{subsec:suppchoice} of the Appendix. To briefly mention a few, the Gaussian direction {\tg} is always inferior to the spherical direction {\ts}. Among the rest of the choices, there is \emph{no domination} relationship, and different optimality criterion in the experimental design leads to different optimal choices of $\Pv$.
}

\subsection{Multi-query extension and statistical efficiency}
\label{sec:multiple}
{
We now consider the {\akw} estimator using $(m+1)$ function queries $\overline\tth_n^{(m)}$ in \eqref{eq:Avg-RandGradEst},
\[
\overline\tth_n^{(m)}=\frac1n\sum_{i=1}^n\tth_i^{(m)},\qquad\text{where  }\tth_{i}^{(m)}=\tth_{i-1}^{(m)}-\eta_i \overline{g}_{n}^{(m)}(\tth_{i-1}; \zet_i)=\tth_{i-1}^{(m)}-\frac{\eta_i}{m}\sum_{j=1}^m \widehat{g}_{h_i,\v_i^{(j)}}(\tth_{i-1}; \zet_i).
\]
 Here we first consider using the same sampling distribution across $m$ queries and $n$ iterations. In other words, $\v_i^{(j)}$ is sampled \emph{i.i.d.} from $\Pv$ for $i=1,2,\dots, n$ and $j=1,2,\dots, m$.
}
Analogous to Theorem \ref{thm:clt}, we present the asymptotic distribution of  multi-query {\akw},
\begin{theorem}
\label{thm:multiple_1}
Under the assumptions in Theorem \ref{thm:clt}, the $(m+1)$-query {\akw} estimator has the following asymptotic distribution, as $n\rightarrow \infty$,
\begin{align*}
\sqrt{n}\left(\overline{\tth}_n^{(m)} - \tth^\star\right) \Longrightarrow \N\left(\0, H^{-1} Q_m H^{-1}\right),\; \;  \text{where } Q_m = \frac{1}{m}Q + \frac{m-1}{m}S.
\end{align*}
\end{theorem}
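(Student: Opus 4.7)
The plan is to follow the proof strategy of Theorem \ref{thm:clt} verbatim, replacing the two-query gradient estimator $\widehat{g}_{h_n,\v_n}$ everywhere by the averaged multi-query estimator $\overline{g}_{n}^{(m)}$. The three pieces of the Theorem \ref{thm:clt} argument — consistency, bias control, and the martingale CLT driven by the limiting Gram matrix — all go through; the only nontrivial change is computing the new Gram matrix, which should yield $Q_m = \frac{1}{m}Q + \frac{m-1}{m}S$.

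First, I would argue that the consistency and bias statements transfer for free. Since $\overline{g}_{n}^{(m)}$ is the average of $m$ i.i.d.\ copies of $\widehat{g}_{h_n,\v}(\tth;\zet_n)$ over the direction $\v$, linearity of expectation gives
\[
\left\|\E\,\overline{g}_{n}^{(m)}(\tth;\zet_n) - \nabla F(\tth)\right\| \leq \left\| \E(\v\v^\top - I_d)\nabla F(\tth)\right\| + \tfrac{h_n}{2}L_f\,\E\|\v\|^3,
\]
i.e.\ exactly the bound from Lemma \ref{lem:bv}. This is all that is needed to port the proof of Proposition \ref{lemma:x} and obtain the finite-sample moment bound $\E\|\tth_n^{(m)}-\tth^\star\|^{2+\delta}\leq C n^{-\alpha(2+\delta)/2}$ used to control the remainder in the CLT.

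The heart of the proof is to identify the asymptotic Gram matrix. Expanding the outer product and splitting diagonal from off-diagonal terms,
\[
\overline{g}_{n}^{(m)}(\tth^\star;\zet)\overline{g}_{n}^{(m)}(\tth^\star;\zet)^\top = \frac{1}{m^2}\sum_{j=1}^m \widehat{g}_{h_n,\v_n^{(j)}}\widehat{g}_{h_n,\v_n^{(j)}}{}^{\!\top} + \frac{1}{m^2}\sum_{j\neq k}\widehat{g}_{h_n,\v_n^{(j)}}\widehat{g}_{h_n,\v_n^{(k)}}{}^{\!\top}.
\]
For each of the $m$ diagonal terms, Lemma \ref{lem:variance} gives $\E[\widehat{g}_{h_n,\v_n^{(j)}}\widehat{g}_{h_n,\v_n^{(j)}}{}^{\!\top}] = Q + \O(h_n)$. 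For the $m(m-1)$ off-diagonal terms, I would use that conditional on $\zet_n$ the vectors $\v_n^{(j)}$ and $\v_n^{(k)}$ are independent with $\E[\v\v^\top] = I_d$. Writing $\widehat{g}_{h_n,\v}(\tth^\star;\zet) = \v\v^\top \nabla f(\tth^\star;\zet) + \O(h_n\|\v\|^2)$ by a Taylor expansion (using Assumption \ref{assumption1}) and applying the tower property yields
\[
\E\bigl[\widehat{g}_{h_n,\v_n^{(j)}}(\tth^\star;\zet)\widehat{g}_{h_n,\v_n^{(k)}}(\tth^\star;\zet)^\top\bigr] = \E\bigl[\v_n^{(j)}\v_n^{(j)\top}\,\nabla f\nabla f^\top\,\v_n^{(k)}\v_n^{(k)\top}\bigr] + \O(h_n) = I_d\, S\, I_d + \O(h_n) = S + \O(h_n).
\]
Combining the two gives $\E[\overline{g}_{n}^{(m)}(\overline{g}_{n}^{(m)})^\top] = \frac{1}{m}Q + \frac{m-1}{m}S + \O(h_n) = Q_m + \O(h_n)$, where the finite-moment conditions of Assumption \ref{assumption4} (the $(6+3\delta)$-th moment of $\v$) ensure uniform control of the remainder.

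Finally, the CLT follows by substituting $Q_m$ for $Q$ in the martingale-difference decomposition of the proof of Theorem \ref{thm:clt}: writing $\sqrt{n}(\overline{\tth}_n^{(m)}-\tth^\star)$ as $H^{-1}\cdot n^{-1/2}\sum_{i=1}^n \bvarepsilon_i$ plus lower-order terms, where $\bvarepsilon_i := \overline{g}_{i}^{(m)}(\tth^\star;\zet_i) - \nabla F(\tth^\star)$ is a martingale difference, and applying the Lindeberg CLT with conditional variance $Q_m$. The remainder control is identical to that in Theorem \ref{thm:clt} because the moment bound in Proposition \ref{lemma:x} applies and because the $m$-averaging only reduces, not enlarges, the per-iterate variance bounds. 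I expect the main obstacle — modest, given the above — is rigorously controlling the off-diagonal remainder $\O(h_n)$ uniformly in $i$ while the iterates $\tth_i^{(m)}$ drift; this is handled exactly as in the proof of Lemma \ref{lem:variance} by expanding $\widehat{g}_{h_n,\v}(\tth_{i-1}^{(m)};\zet_i)$ around $\tth^\star$ and invoking the $(2+\delta)$-moment decay of $\tth_i^{(m)}-\tth^\star$.
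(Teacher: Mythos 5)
Your proposal is correct and follows essentially the same route as the paper: the paper's proof also observes that the only change from Theorem \ref{thm:clt} is the limiting Gram matrix, which it computes as $\E\bigl[\bigl(\tfrac1m\sum_{j}\v^{(j)}\v^{(j)\top}\bigr)S\bigl(\tfrac1m\sum_{j}\v^{(j)}\v^{(j)\top}\bigr)\bigr]=\tfrac1m Q+\tfrac{m-1}{m}S$ via exactly your diagonal/off-diagonal split and the independence of the directions with $\E[\v\v^\top]=I_d$. Your write-up simply spells out the remainder and consistency transfers in more detail than the paper does.
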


Theorem \ref{thm:multiple_1} illustrates a trade-off effect between the statistical efficiency and computational efficiency. When $m=1$ and only two queries of function evaluations are available, Theorem \ref{thm:multiple_1} reduces to Theorem \ref{thm:clt}, and $Q_m=Q$. Conversely, as $m\rightarrow \infty$, we have $Q_m\rightarrow S$. Therefore, the asymptotic covariance of $(m+1)$-query {\akw} estimator $\overline{\tth}_n^{(m)}$ approaches the optimal covariance $H^{-1}SH^{-1}$ 
as $m$ approaches infinite. Nevertheless, the algorithm requires $m$ function-value queries at each iteration, which significantly increases the computation complexity. 

For a finite $m$, a slight revision of the sampling scheme of the direction vectors $\{\v_i^{(j)}\}_{j=1,2,\dots, m}$ provides a remedy to achieve a smaller and indeed optimal asymptotic covariance matrix. {Particularly at the $i$-th iteration, one may sample $m$ direction vectors $\{\v_i^{(j)}\}_{j=1,2,\dots, m}$ from a discrete distribution (such as \hyperref[ti]{\ti} and \hyperref[tu]{\tu}) \emph{without replacement}. In such settings, the direction vectors $\big\{\v_i^{(1)}, \v_i^{(2)}, \dots, \v_i^{(m)}\big\}$ are no longer independent but they have the same marginal distribution.} 
The asymptotic distribution of the multi-query {\kw} algorithm sampling without replacement is provided in the following theorem. 

\begin{theorem}
\label{thm:multiple_2}
Under the assumptions in Theorem \ref{thm:clt}, and the direction vectors in all iterations $\big\{\widetilde V_{i}\big\}_{i=1}^n$ are \emph{i.i.d.} from $\Pv$ such that $\widetilde V_{i}=\big(\v_{i}^{(1)},\v_{i}^{(2)},\dots,\v_{i}^{(m)}\big)$ follows discrete sampling scheme in \hyperref[ti]{\ti} and \hyperref[tu]{\tu} WithOut Replacement {\tt(WOR)}, the $(m+1)$-query {\akw} estimator, referred to as $\overline{\tth}_n^{(m,\tnr)}$, has the following asymptotic distribution, as $n\rightarrow\infty$,
\begin{align*}
\sqrt{n}\left(\overline{\tth}_n^{(m,\tnr)}- \tth^\star\right) \Longrightarrow \N\left(\0, H^{-1} Q_m^{(\tnr)}H^{-1}\right),\; \; \text{ where } Q_m^{(\tnr)} = \frac{(d-m)}{m(d-1)}Q + \frac{d(m-1)}{m(d-1)}S.
\end{align*}
\end{theorem}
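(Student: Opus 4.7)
The plan is to reduce Theorem \ref{thm:multiple_2} to the framework of Theorem \ref{thm:clt} and its i.i.d.\ multi-query analogue Theorem \ref{thm:multiple_1}. Once we verify (i) that $\tth_n^{(m,\tnr)}$ satisfies the $n^{-\alpha/2}$-convergence guarantee of Proposition \ref{lemma:x}, (ii) that $\overline{g}^{(m,\tnr)}$ remains asymptotically unbiased for $\nabla F(\tth)$, and (iii) that its limiting conditional Gram matrix equals $Q_m^{(\tnr)}$, the Polyak-Ruppert averaging argument will mechanically produce the sandwich covariance $H^{-1}Q_m^{(\tnr)} H^{-1}$. Steps (i) and (ii) should be inherited from the single-query case with essentially no extra work: under WOR, only the joint law of $(\v_n^{(1)},\dots,\v_n^{(m)})$ changes, while each marginal $\v_n^{(j)}$ still has law $\Pv$ with $\E[\v_n^{(j)}(\v_n^{(j)})^\top] = I_d$. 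Hence Lemma \ref{lem:bv} applies termwise to give an $O(h_n)$ bias, and the moment bounds of Assumption \ref{assumption4} transfer uniformly in $m \leq d$. The entire content of the theorem sits in step (iii), which I focus on below.

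For (iii), using the pointwise limit $\lim_{h\to 0}\widehat{g}_{h,\v}(\tth^\star;\zet) = \v\v^\top \nabla f(\tth^\star;\zet)$ and the independence of the directions from $\zet$,
\begin{align*}
\lim_{h \to 0}\E\big[\overline{g}^{(m,\tnr)}(\tth^\star;\zet)\,\overline{g}^{(m,\tnr)}(\tth^\star;\zet)^\top\big] = \frac{1}{m^2}\sum_{j,k=1}^m \E\big[\v^{(j)}(\v^{(j)})^\top S\, \v^{(k)}(\v^{(k)})^\top\big].
\end{align*}
The $m$ diagonal terms ($j=k$) each equal $Q$ by Lemma \ref{lem:variance}. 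The main obstacle is handling the $m(m-1)$ off-diagonal terms, which is precisely where WOR injects nontrivial dependence between distinct queries.

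The identity I expect to prove for the off-diagonal cross term is
\begin{align*}
\E\big[\v^{(j)}(\v^{(j)})^\top S\, \v^{(k)}(\v^{(k)})^\top\big] = \frac{d}{d-1}S - \frac{1}{d-1}Q, \qquad j \neq k.
\end{align*}
For \ti this is a direct computation: conditional on $\v^{(j)} = \sqrt{d}\,\e_a$, the vector $\v^{(k)}$ equals $\sqrt{d}\,\e_b$ with $b$ uniform on $[d]\setminus\{a\}$, so the expectation collapses to $\frac{d^2}{d(d-1)}\sum_{a \neq b} S_{ab}\e_a\e_b^\top = \frac{d}{d-1}(S - \diag(S))$, which rewrites as $\frac{d}{d-1}S - \frac{1}{d-1}Q$ using $Q^{\ti} = d\diag(S)$ from Proposition \ref{cor:clt0}. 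The general \tu case reduces to this via the orthogonal change of basis $\v \mapsto U^\top \v$, using $Q^{\tu} = d\,U\diag(U^\top S U)U^\top$. Combining the diagonal and off-diagonal contributions,
\begin{align*}
\frac{1}{m^2}\Big[mQ + m(m-1)\Big(\tfrac{d}{d-1}S - \tfrac{1}{d-1}Q\Big)\Big] = \frac{d-m}{m(d-1)}Q + \frac{d(m-1)}{m(d-1)}S = Q_m^{(\tnr)},
\end{align*}
after which one plugs into the CLT of Theorem \ref{thm:clt} to conclude. As a sanity check, the formula recovers $Q_m$ at $m=1$ and collapses to $S$ at $m=d$, consistent with full-basis WOR achieving the optimal covariance $H^{-1}SH^{-1}$.
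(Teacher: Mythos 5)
Your proposal is correct and follows essentially the same route as the paper: both reduce the statement to the averaging CLT of Theorem \ref{thm:clt} (noting that the marginal law of each $\v_i^{(j)}$ is unchanged under WOR, so the bias and moment bounds carry over) and then identify the limiting Gram matrix by splitting $\E\big[\overline{g}^{(m)}(\overline{g}^{(m)})^\top\big]$ into the $m$ diagonal terms, each equal to $Q$, plus the $m(m-1)$ cross terms induced by sampling without replacement. The only difference is in how the cross term $\E\big[\v^{(1)}(\v^{(1)})^\top S\,\v^{(2)}(\v^{(2)})^\top\big]$ is evaluated: you compute it directly by conditioning on the first draw (and pass to general $\tu$ by the orthogonal change of basis), whereas the paper solves for it algebraically from the exactness identity $Q_d^{(\tnr)}=S$ at $m=d$; both give $\tfrac{d}{d-1}S-\tfrac{1}{d-1}Q$ and hence the same $Q_m^{(\tnr)}$.
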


By comparing the asymptotic covariance matrices in Theorems~\ref{thm:multiple_1} and~\ref{thm:multiple_2},  $Q_m^{(\tnr)}$ for sampling without replacement  case is strictly smaller than $Q_m$  in Theorems~\ref{thm:multiple_1} when we consider multi-query evaluation ($m \geq 2$). Moreover, when $m=d$, it is easy to see that $Q_m^{(\tnr)}=S$. Therefore, the $(d+1)$-query {\akw} estimator $\overline{\tth}_n^{(m,\tnr)}$ achieves the same limiting covariance as that of the averaged {\rm} estimator. 
Under a well-specified parametric model, the limiting covariance matrix $H^{-1}SH^{-1}$ achieves the Cram\'{e}r-Rao lower bound.  This result indicates that the $(d+1)$-query $\overline{\tth}_n^{(d,\tnr)}$ is asymptotically efficient \citep{van2000asymptotic}.

{\red \subsection{Asymptotic behavior of {\akw} estimator for nonsmooth losses}\label{sec:nonsmooth}

The analysis of the asymptotic distribution of the {\akw} estimator remains valid naturally for some nonsmooth loss functions $F(\tth)$ including quantile regression, specifically,
\begin{example}[Quantile Regression]\label{ex3}
Consider a quantile regression model $y_i = \x_i^\top \tth^\star + \varepsilon_i$ where $\{\zet_i=(\x_i,y_i),~i=1,2,\dots, n\}$ is an \emph{i.i.d.} sample of $\zet=(\x,y)$ and the noise $\varepsilon_i$ is independent from $\x_i$ and $\Pr(\varepsilon_i\leq 0)=\tau$. The individual loss $f(\tth;\zeta)=\rho_{\tau}(y-\x^\top \tth)$, where $\rho_{\tau}(z) = z(\tau - 1_{\{z<0\}})$. Although $\rho_\tau$ is non-differentiable, the {\kw} gradient estimator $\widehat{g}_{h, \v}$ is well-defined and takes the following form,
\begin{align*}
\widehat{g}_{h, \v}(\tth; \{\x,y\})=&\frac{\v}{h}\left[\rho_{\tau}\big(y-\x^\top (\tth+h\v)\big)-\rho_{\tau}\big(y-\x^\top \tth\big)\right]\\
=&\v\v^\top \x\big(\tau-1_{\{y-\x^\top\tth<0\}}\big),\quad \text{for }0<h<\left|\frac{y-\x^\top \tth}{\x^\top \v}\right|.
\end{align*}
\end{example}
\noindent We next state modeling assumptions for some nonsmooth losses including Example \ref{ex3}. 
\begin{assumption}\label{assumption6}
Assume that $f(\tth;\zeta)=\rho(y-\x^\top\tth)$ where $\rho(u)$ is a convex function with a subgradient $\psi(u)$, and $|\psi(u)| \leq C(|u| + 1)$ for some constant $C>0$. The covariates $\x$ is independent of $\epsilon$ and $\x$ has finite eighth moments and nondegenerate covariance matrices. Assume the probability density function $p(x)$ of $\varepsilon$ is in $C^3$, its derivatives up to third order are all integrable, and $\varepsilon$ has finite fourth moment. Assume $\phi(u) = \E[\psi(u+\varepsilon)]$ is differentiable, $\phi(0) = 0,$ and $u\phi(u)>0$ for any $u \neq 0$. We further assume  $\phi'(0) >0$, and there exist constants $C > 0$ such that $|\phi'(u)|\leq C$ and $|\phi'(u) - \phi'(v)| \leq C|u-v|$.
\end{assumption}
Assumption \ref{assumption6} essentially guarantees that the population loss function $F(\tth)$ is smooth and locally strongly convex, and the distribution of the noise $\epsilon$ is smooth enough such that, the empirical loss (averaged individual loss) well approximates the population loss asymptotically. We now restate Theorem \ref{thm:clt} below for certain nonsmooth losses under Assumption \ref{assumption6}. The the proof Theorem \ref{thm:clt-qt} is relegated to Section \ref{subsec:quantile} of the Appendix. 

\begin{theorem}
\label{thm:clt-qt}
Let Assumptions  \ref{assumption4} and \ref{assumption6} hold. Under the step size and spacing parameter conditions specified in Theorem \ref{thm:clt}, the averaged estimator $\overline\tth_n$ satisfies,
\begin{align}
\label{eq:clt-qt}
&\sqrt{n} \; \left(\overline{\tth}_n - \tth^\star\right) \Longrightarrow \N\left(\0, H^{-1} Q H^{-1}\right),  \qquad \text{as} \quad  n\rightarrow \infty,
\end{align}
where $Q = \E[ \v\v^\top S \v\v^\top]$,  $S = \E[\psi^2(\varepsilon) \x\x^\top]$, and $H = \E[\phi'(0) \x\x^\top]$.
\end{theorem}

From Theorem~\ref{thm:clt-qt}, we know that the {\akw} estimator of the above quantile regression model is asymptotically normal with asymptotic covariance matrix $H^{-1} Q H^{-1}$ where $Q$ depends on the sampling directions (see Proposition~\ref{cor:clt0}). In an quantile regression example \ref{ex3} when the noise $\epsilon$ follows the normal distribution with standard deviation $1$ and $\mathrm{Pr}(\epsilon\leq 0)=\tau$, a direct computation shows $H=\varphi\big(\Phi^{-1}(\tau)\big)\mathrm{E}[\x\x^\top]$, where $\varphi$ and $\Phi$ are the probability and cumulative distribution functions of a standard normal distribution. Furthermore, if $\v$ is sampled uniformly from the canonical basis with two function queries, we can see from Proposition \ref{cor:clt0} that $Q=Q^{\ti}=d\diag(S)$ where  $S=\tau(1-\tau)\mathbb{E}[\x\x^\top]$.

}

\section{Online Statistical Inference}
\label{sec:infer}

In the previous section, we provide the asymptotic distribution for the  {\akw} estimator. For the purpose of conducting statistical inference of $\tth^\star$, we need a consistent estimator of the limiting covariance $H^{-1}QH^{-1}$ in \eqref{eq:clt}.  A direct way is to construct a pair of consistent estimators $\hat{H}$ and $\hat Q$ of $H$ and $Q$, respectively, and estimate the asymptotic covariance by the \emph{plug-in} estimator $\hat{H}^{-1} \hat Q \hat{H}^{-1}$. Offline construction of those estimators is generally straightforward. However, as the {\kw} scheme typically applies to sequential data, it is ideal to estimate the asymptotic covariance in an online fashion without storing the data. Therefore, one cannot simply replace the true parameter $\tth^\star$ by its estimate $\overline \tth_n$ in $Q$ and $H$ in an online setting, since we can no longer access  the data stream $\{\zet_i\}_{i=1}^n$ after the estimator $\overline\tth_n$ is obtained. To address this challenge, we first propose the following finite-difference Hessian estimator at each iteration $n$: 
\begin{align}\label{eq:emp_hess}
\widetilde{G}_n = \sum_{k=1}^d \sum_{\ell=1}^d \widetilde{G}_{n,kl}\e_k \e_\ell^\top= \frac{1}{h_n^2} \sum_{k=1}^d \sum_{\ell=1}^d \left[\Delta_{h_n,\e_k}f(\tth_{n-1} + h_n \e_\ell; \zet_n) - \Delta_{h_n, \e_k} f(\tth_{n-1}; \zet_n)\right]\e_k \e_\ell^\top,
\end{align}
This construction can be viewed  as a multi-query (with $d^2+1$ queries of function values at each iteration) {\kw} scheme with  the \hyperref[ti]{\ti} choice of the random directions. Other choices of the search directions can be used as well, and discussions are provided in Section \ref{sec:G_con} of the Appendix. Each additional function-value query beyond the first one provides an estimate $\widetilde{G}_{n,kl}$ for the $(k,l)$-th entry of the matrix $\widetilde{G}_n$. To reduce the computational cost in $\widetilde{G}_n$, at each iteration, the algorithm may compute a random subset of entries of $\widetilde{G}_n$ and partially inhere the remaining entries from the previous estimator $\widetilde{G}_{n-1}$. For example, each entry $\widetilde{G}_{n,k\ell}$ is updated with probability $p\in(0,1]$.
The procedure thus requires $\O(p d^2)$ function-value queries at each step. If we set $p = \O(1/d^2)$, then the query complexity is reduced to $\O(1)$ per step. Since the construction of \eqref{eq:emp_hess} does not guarantee symmetry, an additional symmetrization step needs to be conducted, as
\begin{align}
\label{eq:hessian-est2}
\widetilde{H}_n = \frac{1}{n} \sum_{i=1}^n \frac{\widetilde{G}_i + \widetilde{G}_i^\top}{2}.
\end{align}

The next lemma quantifies the estimation error of the Hessian estimator $\widetilde{H}_n$ in \eqref{eq:hessian-est2} and the proof is provided in Section~\ref{sec:app-c} of the Appendix.
\begin{lemma}
\label{lemma:hessian}
 Assume Assumptions \ref{assumption1p}, \ref{assumption2}, \ref{assumption3}, \ref{assumption4} hold, or Assumptions \ref{assumption4}, \ref{assumption6} hold, then $\widetilde{H}_n$ converges in probability to $H$. Additionally,  if Assumptions \ref{assumption1p}, \ref{assumption2}, \ref{assumption3}, \ref{assumption4} hold with $\delta_1=+\infty$ and $\delta=2$, we have
$\E \| \widetilde{H}_n - H \|^2 \leq C_1 n^{-\alpha} + C_2 p^{-1}n^{-1}.
$\end{lemma}

From Lemma \ref{lemma:hessian}, as $n \rightarrow \infty$, the error rate is dominated by the $C_1 n^{-\alpha}$ term, where $\alpha$ is the  parameter of the decaying step sizes. 
\begin{remark}
In construction of the estimator of the limiting covariance matrix $H^{-1}QH^{-1}$, it is necessary to avoid the possible singularity of $\widetilde H_n$. A common practice is to adopt a thresholding version of $\widetilde{H}_n$ in \eqref{eq:hessian-est2}. Let $U \widetilde\Lambda_n U^\top$ be the eigenvalue decomposition of $\widetilde{H}_n$, and define
\begin{align}
\label{eq:hessian-hat}
\widehat{H}_n = U \widehat{\Lambda}_n U^\top, \; \; \widehat{\Lambda}_{n,kk} = \max \left\{\kappa_1, \widetilde\Lambda_{n,kk}\right\}, \; k =1,2,\dots, d,
\end{align}
for any positive constant $\kappa_1 < \lambda$ where $\lambda$ is defined in Assumption~\ref{assumption1p}.  It is guaranteed by construction that $\widehat{H}_n$ is strictly positive definite and thus invertible.
\end{remark}

On the other hand, the estimator of Gram matrix $Q$ can be naturally constructed as 
\begin{align} \label{eq:Q-hat}
\widehat{Q}_n := \frac{1}{n} \sum_{i=1}^n \widehat{g}_{h_i, \v_i}(\tth_{i-1}; \zet_i) \, \widehat{g}_{h_i, \v_i}(\tth_{i-1}; \zet_i)^\top,
\end{align}
where $ \widehat{g}_{h_i, \v_i}(\tth_{i-1}; \zet_i)$ is the {\kw} update in the $i$-th iteration obtained by \eqref{eq:hatgv}. As both $\widehat{H}_n$ in \eqref{eq:hessian-hat} and $\widehat{Q}_n$ in \eqref{eq:Q-hat} can be constructed sequentially without storing historical data\footnote{The sequence $\widehat{Q}_n := \frac{1}{n} \sum_{i=1}^n Q_i$ with $Q_i = \widehat{g}_{h_i, \v_i}(\tth_{i-1}; \zet_i) \, \widehat{g}_{h_i, \v_i}(\tth_{i-1}; \zet_i)^\top$ can be constructed in one pass over the sequential data. In particular, we could compute $\widehat{Q}_n = \frac{1}{n} ((n-1) \widehat{Q}_{n-1}+Q_i)$ sequentially.}, the final plug-in estimator  $\widehat{H}_n^{-1} \widehat{Q}_n \widehat{H}_n^{-1}$ can also be constructed in an online fashion. Based on Lemma~\ref{lemma:hessian}, we obtain the following consistency result of the covariance matrix estimator $\widehat{H}_n^{-1} \widehat{Q}_n \widehat{H}_n^{-1}$.

\begin{theorem}
\label{thm:plugin}
Assume Assumptions \ref{assumption1p}, \ref{assumption2}, \ref{assumption3}, \ref{assumption4} hold, or Assumptions \ref{assumption4}, \ref{assumption6} hold. Under the step size and spacing parameter conditions specified in Theorem \ref{thm:clt} or Theorem \ref{thm:clt-qt}, we have $\widehat{H}_n^{-1} \widehat{Q}_n \widehat{H}_n^{-1}$ converges in probability to $H^{-1} Q H^{-1}$.

Furthermore, if Assumptions \ref{assumption1p}, \ref{assumption2}, \ref{assumption3}, \ref{assumption4} hold with $\delta_1=+\infty$ and $\delta=2$, we have $\E \left\|  \widehat{H}_n^{-1} \widehat{Q}_n \widehat{H}_n^{-1} - H^{-1} Q H^{-1} \right \| \leq C n^{-\alpha/2}$.
\end{theorem}

We defer the technical proof to Section~\ref{sec:app-c} of the Appendix. Theorem \ref{thm:plugin} establishes the consistency and the rate of the convergence of our proposed covariance matrix estimator $\widehat{H}_n^{-1} \widehat{Q}_n \widehat{H}_n^{-1}$. Given Theorems \ref{thm:clt} and \ref{thm:plugin}, a confidence interval of the projected true parameter $\w^\top \tth^\star$ for any $\w\in\R^d$ can be constructed via a projection of $\overline{\tth}_n$ and $\widehat{H}_n^{-1} \widehat{Q}_n \widehat{H}_n^{-1}$ onto $\w$. Specifically, for a pre-specified confidence level $q$ and the corresponding $z$-score $z_{q/2}$, we obtain an asymptotic exact confidence interval as $n \rightarrow \infty$,
\begin{align*}
\P \left\{\w^\top \tth^\star \in \left[ \w^\top  \overline{\tth}_n -  \frac{z_{q/2}}{\sqrt{n}} \sqrt{\w^\top \widehat{H}_n^{-1} \widehat{Q}_n \widehat{H}_n^{-1} \w},  \;\;\; \w^\top  \overline{\tth}_n +\frac{z_{q/2}}{\sqrt{n}} \sqrt{\w^\top \widehat{H}_n^{-1} \widehat{Q}_n \widehat{H}_n^{-1} \w}\right] \right\} \rightarrow 1 - q.
\end{align*}

\subsection{Online inference without additional function-value queries}

Despite the simplicity of the plug-in approach, the proposed estimator $\widehat{H}_n^{-1} \widehat{Q}_n \widehat{H}_n^{-1}$ incurs additional computational and storage cost as it requires additional function-value queries for constructing $\widehat{H}_n$. It raises a natural question: \emph{is it possible to conduct inference only based on {\kw} iterates $\{\tth_i\}_{i=1,2,\dots}$ without additional function-value queries}?

In this section, we provide an affirmative answer to this question, and propose an alternative online statistical inference procedure using the intermediate {\kw} iterates only, without requiring any additional function-value query. Intuitively, the {\akw} estimator in \eqref{eq:akw} is constructed as the average of all intermediate  {\kw} iterates $\{\tth_i\}_{i=1}^n$. If all iterates were independent and identically distributed, the asymptotic covariance
could have been directly estimated by the sample covariance matrix of the iterates $\frac1n\sum_{i=1}^n(\tth_i-\overline\tth)(\tth_i-\overline\tth)^\top$. Unfortunately, the {\kw} iterates are far from independent and indeed highly correlated. Nevertheless, the autocorrelation structure of the iterates can be carefully analyzed and utilized to construct the estimator of $H^{-1}QH^{-1}$.

In this paper, we adopt an alternative approach to take more advantage of the autocorrelation structure  by leveraging the techniques from robust testing literature \citep{abadir1997two,kiefer2000simple,lee2021fast}. Such an estimator is often referred to as the Fixed Bandwidth Heteroskedasticity and Autocorrelation Robust estimator (\emph{fixed-$b$} HAR) in the econometrics literature to overcome the series correlation and heteroskedasticity in the error terms for the OLS estimates of the linear regression. For stochastic approximation, \cite{lee2021fast} first utilized and generalized this technique to construct an online statistical inference procedure, and refer to this method as \emph{random scaling}, followed by \cite{lee2022fast,chen2023sgmm} for extension to quantile regression and generalized method of moments.\label{page:lee3}

In particular, we present the following theorem based on a functional extension of the distributional analysis of the intermediate {\kw} iterates $\{ \tth_t\}$ as a stochastic process.

\begin{theorem} \label{thm:fixed-b}
For any $\w\in\mathbb{R}^d$, under the assumptions in Theorem \ref{thm:clt} or Theorem \ref{thm:clt-qt}, we have
\begin{align}\label{eq:fclt}
\sqrt{n}\frac{\w^\top (\overline{\tth}_n- \tth^\star)}{\sqrt{\w^\top V_n \w}}\Longrightarrow \frac{W_1}{\sqrt{\int_0^1 \left(W_r-r W_1\right)^2 \intd r}},
\end{align}
where $V_n = \frac{1}{n^2}\sum_{i= 1}^n i^2(\overline{\tth}_i - \overline{\tth}_n)(\overline{\tth}_i - \overline{\tth}_n)^{\top}$,  and $\overline{\tth}_i=\frac{1}{i}\sum_{\ell=1}^i\tth_\ell$ is the average of  iterates up to the $i$-th iteration, and $\{W_t\}_{t\geq 0}$ is the standard one-dimensional Brownian motion.
\end{theorem}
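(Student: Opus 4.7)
} The plan is to upgrade the central limit theorem of Theorem~\ref{thm:clt} to a functional central limit theorem (FCLT) for the partial-sum process of the \akw{} iterates, and then apply the continuous mapping theorem to the self-normalized statistic. Writing $\Sigma = H^{-1} Q H^{-1}$ and $\sigma^2 = \w^\top \Sigma \w$, I expect to establish
\begin{align*}
Y_n(r) \;:=\; \sqrt{n}\,\w^\top\bigl(\overline{\tth}_{\lfloor nr\rfloor} - \tth^\star\bigr) \;\Longrightarrow\; \frac{\sigma \, W(r)}{r}, \qquad r\in(0,1],
\end{align*}
as a process in a suitable Skorokhod space, where $W$ is a standard one-dimensional Brownian motion. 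Given this FCLT, the numerator of the left-hand side of \eqref{eq:fclt} is just $Y_n(1)$, and the denominator can be written as a Riemann sum: since $\w^\top\overline{\tth}_i - \w^\top\overline{\tth}_n = n^{-1/2}(Y_n(i/n) - Y_n(1))$, a direct calculation gives
\begin{align*}
\w^\top V_n \w = \frac{1}{n}\sum_{i=1}^n \Bigl(\tfrac{i}{n}\Bigr)^{\!2}\bigl(Y_n(i/n) - Y_n(1)\bigr)^{\!2}
\Longrightarrow \int_0^1 r^2\Bigl(\tfrac{\sigma W(r)}{r} - \sigma W(1)\Bigr)^{\!2} \intd r
= \sigma^2\!\int_0^1 (W(r) - r W(1))^2 \intd r.
\end{align*}
The desired limit \eqref{eq:fclt} then follows from the continuous mapping theorem applied to the map $X \mapsto X(1)/\sqrt{\int_0^1 r^2 (X(r)-X(1))^2 \intd r}$, since the $\sigma$'s cancel.

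The backbone of the FCLT will be the standard error decomposition used in the averaged Robbins–Monro analysis, adapted to the \kw{} gradient. Writing $\overline{g}_{n} := \widehat{g}_{h_n,\v_n}(\tth_{n-1};\zet_n)$ and using a Taylor expansion around $\tth^\star$,
\begin{align*}
\tth_n - \tth^\star = (I - \eta_n H)(\tth_{n-1}-\tth^\star) - \eta_n\bigl(\xi_n + b_n + R_n\bigr),
\end{align*}
where $\xi_n := \overline{g}_n - \E_{n-1}[\overline{g}_n]$ is a martingale-difference noise term with conditional second moment converging to $Q$ (by Lemma~\ref{lem:variance}), $b_n := \E_{n-1}[\overline{g}_n] - \nabla F(\tth_{n-1})$ is the finite-difference bias controlled by Lemma~\ref{lem:bv}, and $R_n$ is a higher-order Taylor remainder. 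Inverting the recursion and averaging up to index $k = \lfloor nr\rfloor$ yields, after the usual summation by parts,
\begin{align*}
\overline{\tth}_{k} - \tth^\star \;=\; -\,\frac{1}{k}\,H^{-1}\!\sum_{i=1}^{k}\xi_i \;+\; \mathcal{E}_n(r),
\end{align*}
with $\mathcal{E}_n(r)$ collecting the contributions of $b_n$, $R_n$, and the initial/step-size discrepancy terms. Invoking Proposition~\ref{lemma:x} for the $(2+\delta)$-moment control on $\|\tth_{n-1}-\tth^\star\|$ together with the rates $\eta_n \asymp n^{-\alpha}$ and $h_n \asymp n^{-\gamma}$ ($\alpha,\gamma\in(1/2,1)$), I can show that $\sqrt{n}\,\sup_{r\in[\epsilon,1]}\|\mathcal{E}_n(r)\| = o_p(1)$ for any fixed $\epsilon>0$. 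The principal term, $-\tfrac{\sqrt n}{k}H^{-1}\sum_{i=1}^k \xi_i = -\tfrac{1}{r}H^{-1}\cdot n^{-1/2}\sum_{i=1}^{\lfloor nr\rfloor}\xi_i$, then converges by the martingale FCLT (Lindeberg condition verified using Assumptions~\ref{assumption2} and \ref{assumption4}) to $-\tfrac{1}{r}H^{-1} Q^{1/2} W(r)$, yielding the claimed form after projecting onto $\w$.

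The main obstacle is twofold. First, the FCLT needs joint convergence over $r\in[0,1]$ uniformly enough for the continuous mapping theorem; this requires tightness of the process $Y_n(\cdot)$, which in turn requires uniform-in-$r$ bounds on the remainder $\mathcal{E}_n(r)$ together with tightness of the martingale partial-sum process. The finite-difference bias $b_n$ is more delicate than in the Robbins–Monro case because it does not vanish in finite samples; I expect to control $\|b_n\| \lesssim h_n \cdot(1+\|\tth_{n-1}-\tth^\star\|)$ and show that $\sqrt n \sum_{i\le k}\eta_i b_i$, suitably handled, contributes at most $o_p(1)$ uniformly in $r$ using $\gamma>1/2$. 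Second, the singularity of $1/r$ at the origin in the limiting process requires either treating $r$ near zero separately (the integrand $r^2(Y_n(r)-Y_n(1))^2$ is bounded regardless, which is what makes the functional continuous) or formulating the FCLT on $[\epsilon,1]$ and passing $\epsilon\to 0$ using the square-integrability near $0$ of the limit.
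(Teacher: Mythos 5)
Your proposal is correct and follows essentially the same route as the paper: a functional CLT for the partial sums of the \kw{} iterates, obtained by linearizing the recursion around $\tth^\star$ (the Polyak--Juditsky decomposition into a martingale term handled by the martingale FCLT plus bias/remainder terms that are $o_p(1)$ uniformly by Proposition~\ref{lemma:x} and the rates on $\eta_n, h_n$), followed by the continuous mapping theorem applied to the self-normalized functional. The only cosmetic difference is that the paper works directly with the unnormalized partial-sum process $C_n(r)=n^{-1/2}\sum_{i\le \lfloor nr\rfloor}\w^\top(\tth_i-\tth^\star)$, which equals $r\,Y_n(r)$ up to rounding, so the $1/r$ singularity you flag as the second obstacle never arises and no $\epsilon\to 0$ argument is needed.
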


As an important special case, when $\w = \e_k$ for $k = 1,2, \ldots, d$, we have the convergence in each coordinate to the following pivotal limiting distribution,
\begin{align}\label{eq:fclt2}
\frac{\sqrt{n}(\overline{\tth}_{n, k} - \tth^\star_{k})}{\sqrt{V_{n, kk}}} \Longrightarrow \frac{W_1}{\sqrt{\int_0^1 \left(W_r-r W_1\right)^2 \intd r}},
\end{align}
For the asymptotic distribution defined on the right hand side in \eqref{eq:fclt2}, we repeat the quantiles of the distribution published by \cite{abadir1997two} in Table~\ref{table:fclt}\footnote{Since the distribution on the right hand side of \eqref{eq:fclt} is symmetric, we provide one-side quantiles only. 
}. Combining the asymptotic results in \eqref{eq:fclt2} and Table~\ref{table:fclt}, we can construct coordinate-wise confidence intervals for the true parameter $\tth^\star$. In addition, as 
\begin{align}\label{eq:updatevn}
V_n&= \frac{1}{n^2}\sum_{i= 1}^n i^2(\overline{\tth}_i - \overline{\tth}_n)(\overline{\tth}_i - \overline{\tth}_n)^{\top}\notag\\
&=\frac{1}{n^2}\sum_{i= 1}^n i^2\overline{\tth}_i \overline{\tth}_i^\top- \frac{\overline{\tth}_n}{n^2}\sum_{i= 1}^n i^2\overline{\tth}_i^\top-\frac{1}{n^2}\Big(\sum_{i= 1}^n i^2\overline{\tth}_i\Big)\overline{\tth}_n+\frac{1}{n^2}\sum_{i= 1}^n i^2\overline{\tth}_n\overline{\tth}_n^{\top}
\end{align}
can be constructed in an online fashion via the iterative updates of the matrix $\sum_{i= 1}^n i^2\overline{\tth}_i \overline{\tth}_i^\top$ and the vector $\sum_{i= 1}^n i^2\overline{\tth}_i$, the proposed online inference procedure only requires one pass over the data.

\begin{table}
  \centering
  \small
  \begin{tabular}{c|cccc}
  \toprule
   Quantile & 90\% & 95\% & 97.5\% & 99\% \\
  \hline
  \cite{abadir1997two} Table 1 & 3.875 & 5.323 & 6.747 & 8.613 \\
  \bottomrule
  \end{tabular}
  \caption{Cumulative probability table of the limiting distribution.}
  \label{table:fclt}
\end{table}

\section{Numerical Experiments}
\label{sec:exp}

In this numerical section, we first investigate the empirical performance of the proposed inference procedures and their corresponding coverage rates. We consider linear regression and logistic regression in this section (Examples \ref{ex1}--\ref{ex2}) and conduct simulations for quantile regression (Example \ref{ex3}) in the next subsection. Here $\{\x_i, y_i\}_{i=1}^n$ is an \emph{i.i.d.} sample with the covariate $ \x \sim \N(\0, \Sigma)$ and the response $y \in \R$. We set the sample size $n = 10^5$ and the parameter dimension $d=5,20,50$. 
The true model parameter  $\tth^\star \in \R^d$ is selected uniformly from the unit sphere. For both models, we consider two different structures of the covariance matrices $\Sigma$: identity matrix $I_d$ and equicorrelation covariance matrix (Equicorr in the tables), i.e., $\Sigma_{k\ell} = 0.2$ for all $k \neq \ell$ and $\Sigma_{kk}=1$ for $k=1,2,\dots,d$. The stepsize and spacing parameters for the first $50\times d$ iterations are set flat to avoid a sharp change in the learning rate. Particularly, the stepsize $\eta_n=\eta_0(\max\{n,50d\})^{-\alpha}$ and spacing parameters $h_n=h_0(\max\{n,50d\})^{-\gamma}$ where the exponents are set to $\alpha=\gamma=0.501$ to satisfy the assumptions in Theorem \ref{thm:clt}. The constant $h_0$ is set to $0.01$ for both examples, and $\eta_0$ is a tunable hyperparameter. The  variance of noise $\varepsilon$ in the linear regression model (Example \ref{ex1}) is set to $\sigma^2 = 1$. For both examples, the algorithm initialized from $\tth_0$ randomly sampled spherically with radius $0.01$.  

\begin{table}[!t]
  \centering{}
  \small
  \renewcommand{\baselinestretch}{1.6}
  \begin{tabular}{ccc|c|ccc|ccc}
    \toprule
    &$d$ & $\Sigma$ &{Estimation error} & \multicolumn{3}{c|}{Average coverage rate} & \multicolumn{3}{c}{Average length}\\
    &&& (standard error) & Plug-in &  Oracle &Fixed-$b$ & Plug-in & Oracle & Fixed-$b$  \\
    \hline
    \multicolumn{3}{l|}{Linear} &&&&&&&\\
    &                      &  Identity    &  0.015	(	0.005	)&	0.944	&	0.938	&	0.940	&	0.028	&	0.028	&	0.036	\\
        &    \multirow{-2}{*}{5}                  & Equicorr  & 0.017	(	0.006	)&	0.958	&	0.954	&	0.946	&	0.032	&	0.032	&	0.041	\\
          &                      &   Identity          & 0.066	(	0.010	)&	0.943	&	0.938	&	0.928	&	0.058	&	0.056	&	0.074	\\
    &\multirow{-2}{*}{20}  & Equicorr & 0.082	(	0.014	)&	0.938	&	0.931	&	0.923	&	0.071	&	0.068	&	0.087	\\
            &                      &  Identity    &   0.180	(	0.018	)&	0.947	&	0.917	&	0.881	&	0.097	&	0.089	&	0.108	\\
        &    \multirow{-2}{*}{50}                  & Equicorr  &0.227	(	0.026	)&	0.937	&	0.912	&	0.860	&	0.121	&	0.110	&	0.126	\\
   \hline
    \multicolumn{3}{l|}{Logistic} &&&&&&&\\
    &                      &  Identity      & 0.037	(	0.011	)&	0.946	&	0.938	&	0.916	&	0.065	&	0.065	&	0.075	\\
    &    \multirow{-2}{*}{5}                  & Equicorr  & 0.042	(	0.015	)&	0.934	&	0.932	&	0.908	&	0.073	&	0.073	&	0.085	\\
  &                      &   Identity          & 0.152	(	0.025	)&	0.943	&	0.937	&	0.862	&	0.128	&	0.125	&	0.136	\\
    &\multirow{-2}{*}{20}  & Equicorr & 0.177	(	0.030	)&	0.939	&	0.935	&	0.848	&	0.154	&	0.150	&	0.158	\\
    &                      &  Identity      & 0.404	(	0.040	)&	0.914	&	0.912	&	0.688	&	0.199	&	0.197	&	0.140	\\
    &    \multirow{-2}{*}{50}                  & Equicorr  & 0.495	(	0.051	)&	0.920	&	0.917	&	0.620	&	0.245	&	0.241	&	0.142	\\
    \bottomrule
  \end{tabular}
  \caption{Estimation errors, averaged coverage rates, and average lengths of the proposed algorithm with search direction {\ti} and two function queries ($m=1$). Sample size $n=10^5$. Corresponding standard errors are reported in the brackets. We compare the plug-in covariance estimator (plug-in) based inference \eqref{eq:hessian-est2}  and fixed-$b$ HAR (fixed-$b$)  based inference \eqref{eq:fclt2}.}
  \label{table:inference}
\end{table}

We first report the performance of {\akw} with the search direction uniformly sampled from the natural basis, referred to as {\ti} in Section \ref{sec:choice_dir}. 
In Table~\ref{table:inference}, we present the mean and standard error of the estimation errors in the Euclidean norm (i.e., $\| \overline{\tth}_n - \tth^\star \|$, see the first column), with $100$ Monte-Carlo simulations. 
Next, we set the nominal coverage probability as $95\%$ and we project $\tth\in\mathbb{R}^d$ onto $\e_k$ to construct confidence intervals, where $\e_k$ is the standard basis in $\R^d$ with the $k$-th coordinate as $1$ and the other coordinates as $0$. We record the average coverage rate and the average length of the intervals among the $d$ coordinates for (1) the plug-in covariance matrix estimator\footnote{{Here we use updating probability $p=1$ for the plug-in estimation. In other words, $d^2+1$ queries of function values are obtained at each iteration.}} \eqref{eq:emp_hess} and (2) the fixed-$b$ HAR procedure in  \eqref{eq:updatevn}, for each simulation and report the mean coverage and median interval lengths. As an oracle benchmark, we also report the length of the confidence interval with respect to the true covariance matrix $H^{-1}QH^{-1}$ of the plug-in approach and the corresponding mean coverage rate. As shown from Table \ref{table:inference}, the coverage rate of the plug-in covariance estimator and the oracle coverage rates are very close to the desired $95\%$ coverage, while the fixed-$b$ HAR approach is comparable in small dimension $d=5, 20$ but has lower coverage rates for the large dimension $d=50$.   The average lengths of the plug-in method are comparable to the lengths derived from the true limiting covariance. Due to the space constraints, we relegate the additional simulation results for other choices of direction distributions and multi-query methods to Section \ref{sec:simu_supp} of the Appendix.

\begin{table}[t]
  \centering{}
  \small
  \renewcommand{\baselinestretch}{1.6}
  \begin{tabular}{ccc|c|ccc|ccc}
    \toprule
    &\multirow{2}{*}{$\tau$} & Search& Estimation error & \multicolumn{3}{c|}{Plug-in} & \multicolumn{3}{c}{Fixed-$b$}\\
        &&direction &  (standard error) & Coverage & Length & Time & Coverage & Length & Time  \\
    \hline
    &                      & {\ti} & 0.041	(	0.007	)&0.923	&0.033	&	318.9	&0.893	&0.041	&	99.7	\\
    &                      & {\ts} & 0.042	(	0.007	)&0.950	&0.040	&	306.4	&0.885	&0.041	&	89.0	\\
    &\multirow{-3}{*}{0.1}   & {\tg} & 0.043	(	0.007	)&0.896	&0.032	&	344.4	&0.873	&0.043	&	83.8	\\ \hline
    &                      & {\ti} &0.027	(	0.004	)&0.903	&0.020	&	303.7	&0.915	&0.028	&	94.8	\\
        &                      & {\ts} & 0.026	(	0.004	)&0.904	&0.020	&	295.0	&0.919	&0.027	&	85.4	\\
    &\multirow{-3}{*}{0.5}   & {\tg} & 0.027	(	0.004	)&0.928	&0.022	&	268.5	&0.911	&0.028	&	64.2	\\ \hline
    &                      & {\ti} & 0.041	(	0.006	)&0.934	&0.034	&	299.8	&0.900	&0.041	&	93.7	\\
    &                      & {\ts} & 0.041	(	0.007	)&0.949	&0.040	&	293.1	&0.904	&0.042	&	84.9	\\
    &\multirow{-3}{*}{0.9}   & {\tg} & 0.043	(	0.007	)&0.892	&0.032	&	267.3	&0.897	&0.043	&	64.0	\\
        \bottomrule
  \end{tabular}
\caption{Estimation errors, averaged coverage rates (Coverage), median interval lengths (Length), and computation time (Time) of the proposed algorithm with search direction {\ti}, {\ts}, {\tg} defined in Section \ref{sec:choice_dir} and two function queries ($m=1$), under quantile regression model. Sample size $n=10^6$, dimension $d=20$. Corresponding standard errors are reported in the brackets. We compare the plug-in covariance estimator (plug-in) based inference \eqref{eq:hessian-est2}  and fixed-$b$ HAR (fixed-$b$) based inference \eqref{eq:fclt2}.  }  \label{table:quantile}
\end{table}

\subsection{Numerical Experiments on Non-smooth Loss Function}\label{eqref:simuqr}

In this section, we provide simulation studies to illustrate the performance of the {\akw} estimator and inference procedures on quantile regression. Our data is generated from a linear model, $y_i = \x_i^\top \tth^\star + \epsilon_i$, where $\{\zet_i=(\x_i,y_i)\}_{i=1}^n$ is an \emph{i.i.d.} sample with the covariate $ \x \sim \N(\0, \Sigma)$ and the noise $\{\epsilon_i\}$ follows an \emph{i.i.d.} normal distribution such that $\epsilon_i \sim \mathcal{N}(-\sigma \Phi^{-1}(\tau), \sigma^2)$, $\operatorname{Pr}\left(\epsilon_{i} \leq 0 \mid \boldsymbol{x}_{i}\right)=\tau$.
Here $\Phi(\cdot)$ is the cumulative density function of standard normal distribution and $\Phi^{-1}(\cdot)$ is its inverse function. For each quantile level $\tau \in (0,1)$, the individual loss is $f(\boldsymbol{\theta} ; \zeta)=\rho_{\tau}\left(y-\boldsymbol{x}^{\top} \boldsymbol{\theta}\right)$, where $\rho_{\tau}(z)=z\left(\tau-1{\{z<0\}}\right)$. In this example, we have $H= \frac{1}{\sigma}\phi\big(\Phi^{-1}(\tau)\big) \E [\x \x^\top] $ and $Q=\E[ \v\v^\top S \v\v^\top]$ where $S= \tau (1-\tau) \E [\x \x^\top] $, according to Theorem \ref{thm:clt-qt}. The explicit form of $Q$ is provided in Proposition \ref{cor:clt0}, for example, if we sample uniformly from the canonical basis with two function queries ($m=1$), then $\Qi = d \, \diag(S)$. 

In the numerical experiments below, we fix sample size $n=10^6$, dimension $d=20$, and the noise variance $\sigma^2=1$. The stepsizes and spacing parameters are set with same specifications except for $h_0=1$ and $\eta_0=0.1$. We present our results below in Table~\ref{table:quantile} with three quantile levels $\tau = 0.1, 0.5, 0.9$ and three searching direction schemes {\ti}, {\ts}, {\tg}, detailed descriptions of which are presented in Proposition \ref{cor:clt0} of Section \ref{sec:choice_dir}. As can be inferred from the table, plug-in estimators have a good coverage rate close to the oracle ones. The fixed-$b$ HAR inference structure provides coverage around $90\%$ without additional function queries.

\section*{Acknowledgements} The authors are very grateful to Professor Yuan Liao from Rutgers University for the constructive comments that improved the quality of this paper.

\setcounter{section}{0}
\renewcommand\thesection{\Alph{section}}
\appendix

\numberwithin{figure}{section}
\numberwithin{table}{section}

\clearpage
\section*{Appendix}

Throughout the appendix, we will assume, without loss of generality, $F(\cdot)$ achieves its minimum at $\tth^\star = \0$ and $F(\0) = 0$. We now introduce some notations as follows,
\begin{align*}
\bxi_n &= \nabla F (\tth_{n-1}) - \E_{n-1} (\frac{1}{h_n}[F(\tth_{n-1}+h_n \v_n)-F(\tth_{n-1})] \v_n), \\
\bgamma_n &= \E_{n-1} \frac{1}{h_n}[F(\tth_{n-1}+h_n \v_n)-F(\tth_{n-1})] \v_n - \frac{1}{h_n}[F(\tth_{n-1}+h_n \v_n)-F(\tth_{n-1})] \v_n, \\
\bvarepsilon_n &= \frac{1}{h_n}[F(\tth_{n-1}+h_n \v_n)-F(\tth_{n-1})] \v_n - \frac{1}{h_n}[f(\tth_{n-1}+h_n \v_n ;\zet_n)-f(\tth_{n-1}; \zet_n)]\v_n.
\end{align*}
We also note that $C$ refers to constants that may change from equation to equation. 

\section{Appendix for Section~\ref{sec:clt}}
\label{sec:app-a}

\subsection{Two-query approximation}
\label{subsec:two-query}

\subsubsection*{Proof of Lemma~\ref{lem:bv}}

\begin{proof}
By definition, $\E_{\zet} \; \widehat{g}_{h, \v}(\tth; \zet) = \frac{1}{h}\Delta_{h,\v}F(\tth) \v = \frac{1}{h} \left[F(\tth+ h \v)-F(\tth)\right] \v$. For the first inequality, we have
\begin{align}
\label{eq:lem1:est1}
\left\| \E \; \widehat{g}_{h, \v}(\tth; \zet) - \nabla F(\tth) \right\| &=\left\| \E \; \frac{1}{h} \left[F(\tth+ h \v)-F(\tth)\right] \v - \nabla F(\tth) \right\| \notag\\
&= \left\| \E \; \v \v^\top \nabla F(\tth) + \frac{1}{2}h\E \; \v \v^\top \nabla^2 F(\tth_{h, \v}) \v  - \nabla F(\tth) \right\| \notag\\
&=  \frac{1}{2}h\left\| \E \; \v \v^\top \nabla^2 F(\tth_{h, \v}) \v \right\| \notag\\
&\leq \frac{1}{2}h L_f \E \|\v\|^3,
\end{align}
where in the third equality we use the Taylor expansion of $F(\tth)$, and $\tth_{h, \v}$ comes from the remainder term of the Taylor expansion.
\end{proof}

\subsubsection*{Proof of Proposition~\ref{lemma:x}}

\begin{lemmax}
{ Assume Assumptions \ref{assumption1p}, \ref{assumption2}, and \ref{assumption4} hold.} Set the step size as $\eta_n = \eta_0 n^{-\alpha}$ for some constant $\eta_0>0$ and $\alpha\in \left(\frac{1}{2}, 1\right)$ and the spacing parameter as $h_n = h_0 n^{-\gamma}$ for some constant $h_0 > 0$, and $\gamma \in \left(\frac{1}{2}, 1\right)$. The {\kw} iterate $\tth_n$ converges to $\tth^\star$ almost surely. 
{ Furthermore, assume Assumptions~\ref{assumption1p} holds with $\delta_1=+\infty$.} For sufficiently large $n$, we have for $0 < \delta \leq 2$,
\begin{align*}
\E\| \tth_n - \tth^\star\|^{2+\delta} &\leq C n^{-\alpha(2+\delta)/2}.
\end{align*}
where the constant $C$ depends on $d, \lambda,  L_f,\alpha, \gamma,$ $\eta_0, h_0$.
\end{lemmax}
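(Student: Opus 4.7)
The plan is to study the one-step recursion for $\tth_n - \tth^\star$ via the three-term decomposition introduced at the start of the appendix: $\widehat{g}_{h_n,\v_n}(\tth_{n-1};\zet_n) = \nabla F(\tth_{n-1}) - \bxi_n - \bgamma_n - \bvarepsilon_n$, where $\bxi_n$ is $\mathcal{F}_{n-1}$-measurable and captures the pure KW bias, while $\bgamma_n$ (direction noise) and $\bvarepsilon_n$ (sample noise) are conditionally mean-zero given $\mathcal{F}_{n-1}$. Lemma~\ref{lem:bv} combined with the constraint $\E[\v\v^\top]=I_d$ from Assumption~\ref{assumption4} gives $\|\bxi_n\|\leq \tfrac12 L_f h_n \E\|\v\|^3 = O(h_n)$. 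For the noise terms, Taylor-expanding $f(\tth_{n-1}+h_n\v_n;\zet_n)$ around $\tth_{n-1}$ to first order produces $\widehat{g}_{h_n,\v_n}(\tth_{n-1};\zet_n) = \v_n\v_n^\top \nabla f(\tth_{n-1};\zet_n) + O(h_n\|\v_n\|^3)$ pointwise, so the $h_n^{-1}$ prefactor cancels; Assumption~\ref{assumption2} together with the $(6+3\delta)$-th moment bound on $\v$ then yields $\E_{n-1}\|\bgamma_n\|^{2+\delta} + \E_{n-1}\|\bvarepsilon_n\|^{2+\delta} \leq C(1+\|\tth_{n-1}-\tth^\star\|^{2+\delta})$.

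As a warm-up, I would expand $\|\tth_n-\tth^\star\|^2$, take $\E_{n-1}$, and use strong convexity and $L_f$-smoothness (Assumption~\ref{assumption1}) to derive the recursion
\[
\E_{n-1}\|\tth_n - \tth^\star\|^2 \leq (1 - c\eta_n)\|\tth_{n-1} - \tth^\star\|^2 + C(\eta_n^2 + \eta_n h_n^2),
\]
where the $\eta_n h_n^2$ term arises from absorbing the bias cross-term $2\eta_n\langle\tth_{n-1}-\tth^\star,\bxi_n\rangle$ via Young's inequality and the noise square via the $L^2$ bound above. Iterating with $\eta_n=\eta_0 n^{-\alpha}$, $h_n=h_0 n^{-\gamma}$ yields $\E\|\tth_n - \tth^\star\|^2 = O(n^{-\alpha})$, which is the $\delta=0$ endpoint. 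Almost-sure convergence then follows by applying Robbins-Siegmund to the same recursion, noting that $\sum \eta_n=\infty$, $\sum \eta_n^2<\infty$ (since $\alpha>\tfrac12$), and $\sum \eta_n h_n^2<\infty$ (since $\alpha+2\gamma>\tfrac32$).

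To extend the bound to $0<\delta\leq 2$, I would induct on the moment order. The key pointwise inequality is the Taylor-type bound $\|a+b\|^{2+\delta} \leq \|a\|^{2+\delta} + (2+\delta)\|a\|^\delta\langle a,b\rangle + C_\delta(\|a\|^\delta \|b\|^2 + \|b\|^{2+\delta})$. Apply this with $a = \tth_{n-1} - \tth^\star - \eta_n\nabla F(\tth_{n-1})$, whose norm satisfies $\|a\|^2 \leq (1-2\eta_n\lambda + L_f^2\eta_n^2)\|\tth_{n-1}-\tth^\star\|^2$, and $b = \eta_n(\bxi_n + \bgamma_n + \bvarepsilon_n)$; conditioning on $\mathcal{F}_{n-1}$ kills the linear-in-noise term, $\bxi_n$ contributes $O(\eta_n h_n\|\tth_{n-1}-\tth^\star\|^{1+\delta})$ which is absorbed by Young against the contraction, and the quadratic-plus-higher noise terms contribute $C\eta_n^2\|\tth_{n-1}-\tth^\star\|^\delta(1+\|\tth_{n-1}-\tth^\star\|^2) + C\eta_n^{2+\delta}(1+\|\tth_{n-1}-\tth^\star\|^{2+\delta})$. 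Substituting the previously established second-moment bound to control the mixed $\|\tth_{n-1}-\tth^\star\|^\delta$ factor yields a clean recursion $a_n \leq (1-c_\delta \eta_n)a_{n-1} + C\eta_n^{1+\delta/2}$ for $a_n := \E\|\tth_n-\tth^\star\|^{2+\delta}$, which solves to $a_n=O(\eta_n^{1+\delta/2})=O(n^{-\alpha(2+\delta)/2})$.

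The principal obstacle is the higher-moment control of $\|\widehat{g}_{h_n,\v_n}\|^{2+\delta}$: a naive application of the defining difference quotient produces an $h_n^{-(2+\delta)}$ blowup, so the Taylor cancellation must be propagated throughout the $(2+\delta)$-th-moment expansion, and the $(6+3\delta)$-th moment condition on $\v$ is precisely what is required to control $\E\|\v_n\v_n^\top\nabla f\|^{2+\delta}$ together with the remainder $\E(h_n\|\v_n\|^3)^{2+\delta}$. A secondary subtlety is carefully tuning the Young-inequality constants so that the $\eta_n h_n$ bias cross-terms never dominate the leading $\eta_n^{1+\delta/2}$ noise scale; this is where the dependencies of $C$ on $d,\lambda,L_f,\alpha,\gamma,\eta_0,h_0$ enter explicitly, to be traced in the accompanying remark.
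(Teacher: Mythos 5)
Your proposal follows essentially the same route as the paper's proof: the identical $\bxi_n,\bgamma_n,\bvarepsilon_n$ decomposition with the same order-of-magnitude bounds, the same second-moment recursion unrolled to get the $O(n^{-\alpha})$ rate and almost-sure convergence via a supermartingale argument, and the same $(2+\delta)$-power inequality bootstrapped from the lower-moment bound by induction. The one detail to repair is your pointwise first-order Taylor expansion of the individual loss $f$ with an $O(h_n\|\v_n\|^3)$ remainder, which would require control of $\nabla^2 f$ that is not available under Assumptions \ref{assumption1}, \ref{assumption2}, \ref{assumption4}; the paper instead represents $\bvarepsilon_n$ as $\frac{1}{h_n}\int_0^{h_n}\langle\nabla F(\tth_{n-1}+s\v_n)-\nabla f(\tth_{n-1}+s\v_n;\zet_n),\v_n\rangle\v_n\intd s$ and applies the moment bound of Assumption \ref{assumption2} inside the integral, which delivers exactly the conditional moment estimates you assert.
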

\begin{remark} \label{rmk:bach} The parameter dependency in Proposition~\ref{lemma:x} could be given explicitly as follows,
\begin{align*}
\E \| \tth_n - \tth^\star\|^2 & \leq \exp\left(CM_1\eta_0/(2\alpha -1)+C M_2/(2\beta -1)-C\lambda \eta_0 n^{1-\alpha}/(1-\alpha)\right) \|\tth_0\|^2\\
& \quad + M_3\left( \exp\left(-C \lambda \eta_0 n^{1-\alpha}/(1-\alpha)\right) + \frac{\eta_0 n^{-\alpha}}{\lambda} \right)\\
& \quad + \frac{M_3}{M_1}\exp\left(CM_1\eta_0/(2\alpha -1)+CM_2/(2\beta -1)-C\lambda \eta_0 n^{1-\alpha}/(1-\alpha)\right),
\end{align*}
where the constant $C$ above is a universal constant that does not depend on any constant/parameter in the assumptions. The other terms $M_1, M_2, M_3$ above are given below,
\begin{align*}
M_1 &= C \left(L_f^2 \E \|\v\|^4 + M^{\frac{2}{2+\delta}} \E \|\v\|^{4} +  L_f^2\right), \\
M_2 &= C L_f^2 \E\|\v\|^3, \\
M_3 &= C\left(\E \|\v\|^3 + \left(h_n^2 L_f^2 \E \|\v\|^6 + M^{\frac{2}{2+\delta}} \E \|\v\|^{4} (h_n^{2}\|\v\|^{2} +1)\right)\right).
\end{align*}
\end{remark}

We will prove both Proposition~\ref{lemma:x} and Remark~\ref{rmk:bach} below.
\begin{proof}
{ We first assume Assumptions \ref{assumption1p}, \ref{assumption2}, and \ref{assumption4} and give some bounds on $\bxi_n, \bgamma_n, \bvarepsilon_n$.} By definition, $\E_{n-1}\bgamma_n = \E_{n-1}\bvarepsilon_n = 0$. From \eqref{eq:lem1:est1},
\begin{align}
\label{eq:fact2}
\|\bxi_n\| \leq \frac{1}{2}h_n L_f \E \|\v\|^3.
\end{align}
We can bound $\bgamma_n$ by the following
\begin{align}
\label{eq:fact3}
\E \|\bgamma_n \|^2 &\leq \E \left\|\frac{1}{h_n}[F(\tth_{n-1}+h_n \v_n)-F(\tth_{n-1})] \v_n \right\|^2 \notag\\
&\leq \E \|\langle \nabla F(\tth_{n-1}), \v_n \rangle \v_n \|^2 + \frac{1}{4} h_n^2 L_f^2 \E \|\v\|^6 \notag\\
&\leq L_f^2 \E \|\v\|^4 \E\|\tth_{n-1}\|^2 + \frac{1}{4} h_n^2 L_f^2 \E \|\v\|^6.
\end{align}
We also have the following fact for $\bvarepsilon$.
\begin{align}
\label{eq:fact4}
&\E_{n-1} \left[ \|\bvarepsilon_n\|^2 | \v_n \right] \notag\\
= \; \; &\E_{n-1} \left[ \left\|\frac{1}{h_n}\int_0^{h_n}\langle \nabla F(\tth_{n-1}+s \v_n) - \nabla f(\tth_{n-1}+s \v_n;\zet_n), \v_n\rangle \v_n \intd s\right\|^2 \bigg| \v_n \right] \notag \\
\leq \; \; &\|\v_n\|^{4} \E_{n-1} \left[ \frac{1}{h_n}\int_0^{h_n} \left\| \nabla F(\tth_{n-1}+s \v_n) - \nabla f(\tth_{n-1}+s \v_n;\zet_n)\right\|^{2} \intd s \big| \v_n \right] \notag \\
\leq \; \; &M^{\frac{2}{2+\delta}} \|\v_n\|^{4} \frac{1}{h_n}\int_0^{h_n}( \|\tth_{n-1}+s \v_n\|^{2}+1) \intd s \notag \\
\leq \; \; &M^{\frac{2}{2+\delta}} \|\v_n\|^{4} (\|\tth_{n-1}\|^{2}+h_n^{2}\|\v_n\|^{2} +1),
\end{align}
where in the second inequality, we use Assumption~\ref{assumption2}.

Now decompose the update step as follows,
\begin{align*}
\tth_{n}&=\tth_{n-1}-\eta_n\frac{1}{h_n}[f(\tth_{n-1}+h_n \u_n ;\zet_n)-f(\tth_{n-1} ;\zet_n)]\\
&= \tth_{n-1}-\eta_n\nabla F(\tth_{n-1}) + \eta_n(\bxi_n + \bgamma_n + \bvarepsilon_n).
\end{align*}
{ We can derive that,
\begin{align} \label{eq:decomp-theta}
\| \tth_n \|^2 \leq &\| \tth_{n-1} \|^2 - 2 \eta_n \langle \nabla F(\tth_{n-1}), \tth_{n-1}\rangle + 2 \eta_n \langle \bxi_n + \bgamma_n + \bvarepsilon_n, \tth_{n-1} \rangle \notag\\
&+\eta_n^2 \| \bxi_n + \bgamma_n + \bvarepsilon_n - \nabla F(\tth_{n-1}) \|^2.
\end{align}
For the first part in the RHS of the above inequality, using Lemma B.1 in \cite{su2018uncertainty}, we have
\begin{align}
\langle \tth, \nabla F(\tth) \rangle \geq \rho \| \tth \| \min \left\{\| \tth \|, \delta_1\right\}.
\end{align}
for some $\rho > 0$. Moreover,
\begin{align}
\label{eq:RHSp2}
\left| \eta_n \E_{n-1} \langle \bxi_n + \bgamma_n + \bvarepsilon_n, \tth_{n-1} \rangle \right|
&= \eta_n \left|\E_{n-1} \langle \bxi_n, \tth_{n-1}\rangle \right| \notag \\
&\leq \frac{1}{2} \eta_n h_n L_f\|\tth_{n-1}\| \E \|\v\|^3 \notag \\
&\leq C L_f^2 \E \|\v\|^3 h_n^2 \| \tth_{n-1} \|^2 + C \E \|\v\|^3 \eta_n^2, \\
\label{eq:RHSp3}
\E_{n-1}\| \bxi_n + \bgamma_n + \bvarepsilon_n - \nabla F(\tth_{n-1}) \|^2
&\leq 4\| \bxi_n \|^2 + 4\|\bgamma_n \|^2+ 4\| \bvarepsilon_n \|^2 +4\| \nabla F(\tth_{n-1}) \|^2\notag \\
&\leq h_n^2 L_f^2\E (\|\v\|^3)^2 + 4 L_f^2 \E \|\v\|^4 \|\tth_{n-1}\|^2 + h_n^2 L_f^2 \E \|\v\|^6 \notag \\
& \quad + 4 M^{\frac{2}{2+\delta}} \E \|\v_n\|^{4} (\|\tth_{n-1}\|^{2}+h_n^{2}\|\v_n\|^{2} +1) + 4 L_f^2 \|\tth_{n-1}\|^2 \notag\\
&: = M_1 \|\tth_{n-1}\|^2 + M_2
\end{align}
where we use Cauchy-Schwarz inequality in \eqref{eq:RHSp2}, \eqref{eq:RHSp3} and $M_1 = C \bigl(L_f^2 \E \|\v\|^4 + M^{\frac{2}{2+\delta}} \E \|\v\|^{4} +  L_f^2\bigr), M_2 = C\bigl(h_n^2 L_f^2 \E \|\v\|^6 + M^{\frac{2}{2+\delta}} \E \|\v\|^{4} (h_n^{2}\|\v\|^{2} +1)\bigr)$.

Combining all estimates, we have
\begin{align*}
\E_{n-1} \| \tth_n \|^2 \leq \left(1 + C \eta_n^2 + C h_n^2\right) \| \tth_{n-1} \|^2 - 2 \eta_n \rho \| \tth_{n-1} \| \min \left\{\| \tth_{n-1} \|, \delta_1\right\} +  C \eta_n^2.
\end{align*}
This provides an analogous recursion form as in \cite[proof of Theorem 2, part 1]{polyak1992acceleration}. Given the assumptions on $\eta_n$ and $h_n$, and we have $\tth_n$ converges almost surely to $\0$, and
\begin{align}\label{eq:asconv}
\sum_{i=1}^\infty \frac{\|\tth_i\|^2}{i^{\frac{1}{2}}} < \infty
\end{align}
almost surely, where \eqref{eq:asconv} is analogous to the second last equation of \cite[proof of Theorem 2, part 4]{polyak1992acceleration}.

Now assume Assumption~\ref{assumption1p} holds with $\delta_1=+\infty$, we have a stronger estimate,
\begin{align*}
\langle \nabla F(\tth_{n-1}), \tth_{n-1}\rangle \geq F(\tth_{n-1}) + \frac{\lambda}{2} \| \tth_{n-1} \|^2 \geq \lambda \| \tth_{n-1} \|^2.
\end{align*}
} So combining all inequalities, we have
\begin{align}
\label{eq:recur1}
\E_{n-1} \| \tth_n \|^2 \leq \left[1 - 2\lambda \eta_n + M_1 \eta_n^2 + M_3 h_n^2 \right] \| \tth_{n-1} \|^2 +  M_4 \eta_n^2,
\end{align}
where $M_3, M_4$ is defined by $M_3 = C L_f^2 \E\|\v\|^3$, $M_4 = C(\E \|\v\|^3 + M_2)$. Following the proof of Theorem 1 of \cite{bach2011non}, we can apply the recursion and get
\begin{align*}
\E \| \tth_n \|^2 \leq \prod_{k=1}^n \left[1 - 2\lambda \eta_k + M_1 \eta_k^2 + CM_3 h_k^2\right] \| \tth_{0} \|^2+  M_4 \sum_{k=1}^n \prod_{i=k+1}^n \left[1 - 2\lambda \eta_i + M_1 \eta_k^2 + M_3 h_k^2 \right]\eta_k^2.
\end{align*}
We can then bound the first term on the RHS,
\begin{align*}
\prod_{k=1}^n \left[1 - 2\lambda \eta_k + M_1 \eta_k^2 +M_3 h_k^2\right]
\leq \exp\left(- 2\lambda \sum_{k=1}^n\eta_k\right)\exp\left(M_1\sum_{k=1}^n \eta_k^2\right)\exp\left(M_3\sum_{k=1}^n h_k^2\right),
\end{align*}
as well as the second term on the RHS
\begin{align*}
&\sum_{k=1}^n \prod_{i=k+1}^n \left[1 - 2\lambda \eta_i + M_1 \eta_k^2 + M_3 h_k^2\right]\eta_k^2 \\
\leq \; \; &\exp\left(-\lambda \sum_{k=m+1}^n \eta_k\right) \sum_{k=1}^n \eta_k^2 +\frac{\eta_m}{\lambda} +\frac{1}{M_1}\exp\left(M_1\sum_{k=1}^{n_0} \eta_k^2\right)\exp\left(M_3\sum_{k=1}^{n_0} h_k^2\right)\exp\left(-\lambda \sum_{k=1}^n \eta_k \right),
\end{align*}
where we denote by $n_0 = \inf \{k\in \mathbb{N}, 1 - 2\lambda \eta_k + M_1 \eta_k^2 + M_3 h_k^2  \leq 1 - \lambda \eta_k\}$ and $m$ is any integer in $\{1, \dots, n\}$. Choose $m = n/2$ and bound $n_0$ by $n$. Notice that $\sum_{k=1}^n \eta_k^2$ converge. So we can get
\begin{align*}
\E \| \tth_n \|^2 & \leq \exp\left(CM_1\eta_0/(2\alpha -1)+CM_3/(2\beta -1)-C\lambda \eta_0 n^{1-\alpha}/(1-\alpha)\right) \|\tth_0\|^2\\
& \quad + M_4\left( \exp\left(-C \lambda \eta_0 n^{1-\alpha}/(1-\alpha)\right) + \frac{\eta_0 n^{-\alpha}}{\lambda} \right)\\
& \quad + \frac{M_4}{M_1}\exp\left(CM_1\eta_0/(2\alpha -1)+CM_3/(2\beta -1)-C\lambda \eta_0 n^{1-\alpha}/(1-\alpha)\right).
\end{align*}
Only the term $M_4\eta_0 n^{-\alpha}/\lambda$ decreases at the order of $O(n^{-\alpha})$ while all the other terms decrease much faster. 

Notice that all $C$'s in the above inequality are universal constants which do not depend on any parameters in the assumptions. This proves Remark~\ref{rmk:bach}.

From now on, we will absorb all parameters (other than $n$) into $C$ to make the asymptotic analysis more clear. By martingale convergence theorem, $\|\tth_n\|$ converges almost surely. Because its second moment converges to $\0$, it must converge to $\0$ almost surely.

We now show that,
\begin{align*}
\E \|\tth_n - \tth^\star\|^{2+\delta} \leq Cn^{-\alpha(2+\delta)/2}.
\end{align*}
By same arguments as in \eqref{eq:fact2}, \eqref{eq:fact3}, \eqref{eq:fact4}, we can get $\|\bxi_n\|^{2+\delta} \leq C h_n^{2+\delta}$, $\E_{n-1} \|\bgamma_n\|^{2+\delta} \leq \|\tth_{n-1}\|^{2+\delta} + Ch_n^{2+\delta}$, $\E_{n-1} \left[ \|\bvarepsilon_n\|^{2+\delta} \right] \leq C(\|\tth_{n-1}\|^{2+\delta} + 1)$.

By similar arguments as in Lemma~\ref{lemmamoments}, there exists constants $C$ such that for any $\a, \b$,
\begin{align*}
\|\a + \b\|^{2+\delta} \leq \|\a\|^{2+\delta} + (2+\delta)\langle \a, \b\rangle \|\a\|^{\delta} + C\|\a\|^{\delta} \|\b\|^2  + C\|\b\|^{2+\delta}.
\end{align*}
So we have the bound
\begin{align*}
\E_{n-1}\|\tth_n\|^{2+\delta} &\leq \|\tth_{n-1}\|^{2+\delta} + \eta_n(2+\delta)\E_{n-1} \langle \tth_{n-1}, -\nabla F(\tth_{n-1}) + \bxi_n+ \bgamma_n + \bvarepsilon_n \rangle \|\tth_{n-1}\|^{\delta}\\
& \quad  + C\eta_n^2 \|\tth_{n-1}\|^{\delta} \E_{n-1}\|-\nabla F(\tth_{n-1}) +\bxi_n+ \bgamma_n + \bvarepsilon_n\|^2 \\
& \quad + C\eta_n^{2+\delta}\E_{n-1}\|-\nabla F(\tth_{n-1}) +\bxi_n+ \bgamma_n + \bvarepsilon_n\|^{2+\delta}\\
& \leq (1-(2+\delta)\lambda\eta_n) \|\tth_{n-1}\|^{2+\delta} + C\eta_n h_n \|\tth_{n-1}\|^{1+\delta} \\
& \quad + C\eta_n^2 (\|\tth_{n-1}\|^2+1) \|\tth_{n-1}\|^{\delta} + C\eta_n^{2+\delta}(\|\tth_{n-1}\|^{2+\delta}+1).
\end{align*}
If $0<\delta\leq 1$, by previous bound $\E\|\tth_n\|^2 \leq Cn^{-\alpha}$, we can get $\E\|\tth_n\|^{1+\delta} \leq Cn^{-\alpha(1+\delta)/2}$ and $\E\|\tth_n\|^\delta \leq Cn^{-\alpha\delta/2} $ by H\"older's inequality. So we can further get
\begin{align*}
\E\|\tth_n\|^{2+\delta} \leq (1-Cn^{-\alpha}+Cn^{-2\alpha}) \E \|\tth_{n-1}\|^{2+\delta} +C n^{-(2+\delta)\alpha/2},
\end{align*}
which implies $\E\|\tth_n\|^{2+\delta} \leq C n^{-(2+\delta)\alpha/2}$ as in the above proof after \eqref{eq:recur1}.

Now the case for $0 < \delta \leq 1$ is proved. We can then use induction. If $\E\|\tth_n\|^{2+\delta} \leq C n^{-(2+\delta)\alpha/2}$ for all $\delta \leq n$, then we can use the same method to prove the same inequality holds for $\delta \in (n, n+1]$. Thus the inequality holds for all $\delta$.
\end{proof}

\subsubsection*{Proof of Lemma~\ref{lem:variance}}

\begin{proof}
By Assumption~\ref{assumption2}, we know that
\begin{align*}
\E \|\nabla f(\tth;\zet) - \nabla F(\tth)\|^{2+\delta} \leq M(\|\tth\|^{2+\delta}+d^{2+\delta}).
\end{align*}
Therefore, the following holds for some constant $C > 0$,
\begin{align}
\label{eq:lem1:est2}
\E \|\nabla f(\tth;\zet) - \nabla F(\tth)\|^2 \leq C(\|\tth\|^2+d^2).
\end{align}
In particular,
\begin{align}
\label{eq:lem1:est3}
\E \|\nabla f(\0; \zet) - \nabla F(\0)\|^2 \leq C.
\end{align}
From Assumption~\ref{assumption3}, we can get the following estimate for the Hessian matrix $\nabla^2 f(\tth;\zet)$,
\begin{align*}
\E \| \nabla^2 f(\tth;\zet) \|^2 &\leq 2\E \| \nabla^2 f(\0;\zet) \|^2 + 2\E \left\| \nabla^2 f(\tth;\zet) - \nabla^2 f(\0;\zet) \right\|^2 \\
&\leq C(1+\|\tth\|^2).
\end{align*}
Using the above observation, we find that
\begin{align}
\label{eq:lem1:est4}
& \quad \E \|\nabla f(\tth; \zet) - \nabla F(\tth) - \nabla f(\0; \zet) + \nabla F(\0)\|^2 \notag\\
&\leq C\|\tth\|^2 + 2  \E \|\nabla f(\tth;\zet)-\nabla f(\0;\zet))\|^2 \notag\\
&=C\|\tth\|^2 + 2  \E \left\|\int_0^1 \nabla^2 f(s\tth;\zet) \tth \intd s \right\|^2 \notag\\
&\leq C\|\tth\|^2 + 2  \E \int_0^1 \| \nabla^2 f(s\tth;\zet) \tth\|^2 \intd s \notag\\
&\leq C\|\tth\|^2 (1+\int_0^1 \E  \| \nabla^2 f(s\tth;\zet)\|^2 \intd s) \notag\\
&\leq C\|\tth\|^2(1+\|\tth\|^2).
\end{align}
Define the function $\Sigma(\tth_1, \tth_2)$ by
\begin{align*}
\Sigma(\tth_1, \tth_2):= \E (\nabla f(\tth_1; \zet) - \nabla F(\tth_1))(\nabla f(\tth_2; \zet) - \nabla F(\tth_2))^\top.
\end{align*}
Then combining inequalities \eqref{eq:lem1:est2}, \eqref{eq:lem1:est3}, \eqref{eq:lem1:est4}, we have
\begin{align}
\label{eq:lem1:est5}
\|\Sigma(\tth_1, \tth_2) -S\|&\leq \E \|(\nabla f(\tth_1; \zet) - \nabla F(\tth_1))(\nabla f(\tth_2; \zet) - \nabla F(\tth_2))^\top \notag\\
& \quad - (\nabla f(\0; \zet) - \nabla F(\0))(\nabla f(\0; \zet) - \nabla F(\0))^\top\| \notag\\
&\leq \E \|\nabla f(\tth_1; \zet) - \nabla F(\tth_1)\|\|\nabla f(\tth_2; \zet) - \nabla F(\tth_2)-\nabla f(\0; \zet) + \nabla F(\0)\| \notag\\
&\quad+\E\|\nabla f(\tth_1; \zet) - \nabla F(\tth_2)-\nabla f(\0; \zet) +\nabla F(\0)\|\|\nabla f(\0; \zet) - \nabla F(\0)\| \notag\\
&\leq C(d+\|\tth_1\|)\|\tth_2\|(1+\|\tth_2\|) +C\|\tth_1\|(1+\|\tth_1\|).
\end{align}
Notice that
\begin{align*}
&\quad \E_{\zet} \widehat{g}_{h, \v}(\tth; \zet) \widehat{g}_{h, \v}(\tth; \zet)^\top - (\frac{1}{h}\Delta_{h, \v}F(\tth) \v)(\frac{1}{h}\Delta_{h, \v}F(\tth) \v)^\top \\
&= \E_{\zet} (\widehat{g}_{h, \v}(\tth; \zet) - \frac{1}{h}\Delta_{h, \v}F(\tth) \v)(\widehat{g}_{h, \v}(\tth; \zet) - \frac{1}{h}\Delta_{h, \v}F(\tth) \v)^\top \\
&= \frac{1}{h^2} \E_{\zet} \v  (f(\tth+ h \v; \zet)-f(\tth; \zet) - F(\tth+ h \v) + F(\tth))^2  \v^\top  \\
&= \frac{1}{h^2}  \E_{\zet}\v \v^\top \bigg[ \int_0^{h} \int_0^{h} \left(\nabla F(\tth+s_1 \v) - \nabla f(\tth+s_1 \v;\zet)\right) \\
& \quad \left(\nabla F(\tth+s_2 \v) - \nabla f(\tth+s_2 \v;\zet)\right)^\top \intd s_1 \intd s_2  \bigg| \bigg] \v \v^\top \\
&= \frac{1}{h^2} \E_{\zet} \v \v^\top  \int_0^{h} \int_0^{h} \Sigma(\tth+s_1 \v, \tth+s_2 \v) \intd s_1 \intd s_2 \v \v^\top.
\end{align*}
We can use \eqref{eq:lem1:est5} and derive that
\begin{align*}
&\quad \| \E \widehat{g}_{h, \v}(\tth; \zet) \widehat{g}_{h, \v}(\tth; \zet)^\top - (\frac{1}{h}\Delta_{h, \v}F(\tth) \v) (\frac{1}{h}\Delta_{h, \v}F(\tth) \v)^\top -  \v \v^\top S \v \v^\top \| \\
&\leq C\|\v\|^4 (\|\tth\|+h\|\v\|)(1 + \|\tth\|+h\|\v\|)(d + \|\tth\|+h\|\v\|).
\end{align*}
Now we have
\begin{align}
\label{eq:estcov}
&\quad \| \E \widehat{g}_{h, \v}(\tth; \zet) \widehat{g}_{h, \v}(\tth; \zet)^\top - \E (\frac{1}{h}\Delta_{h, \v}F(\tth) \v) (\frac{1}{h}\Delta_{h, \v}F(\tth) \v)^\top -  \E\v \v^\top S \v \v^\top \| \notag\\
&\leq C\E\|\v\|^4 (\|\tth\|+h\|\v\|)(1 + \|\tth\|+h\|\v\|)(d + \|\tth\|+h\|\v\|).
\end{align}
By the same argument,
\begin{align*}
& \quad \| \E(\frac{1}{h}\Delta_{h, \v}F(\tth) \v)(\frac{1}{h}\Delta_{h, \v}F(\tth) \v)^\top \|\\
& \leq \frac{1}{h^2}  \E \bigg\|\v \v^\top \bigg[ \int_0^{h} \int_0^{h} \left(\nabla F(\tth+s_1 \v)\right) \left(\nabla F(\tth+s_2 \v) \right)^\top \intd s_1 \intd s_2  \bigg| \bigg] \v \v^\top \bigg\| \\
& \leq C \E \|\v\|^4(\|\tth\|^2+h^2\|\v\|^2).
\end{align*}
So we finally get
\begin{align*}
\| \E \widehat{g}_{h, \v}(\tth; \zet) \widehat{g}_{h, \v}(\tth; \zet)^\top -  \E\v \v^\top S \v \v^\top \| \leq C\E\|\v\|^4(\|\tth\|+h\|\v\|)(1 + \|\tth\|+h\|\v\|)(d + \|\tth\|+h\|\v\|).
\end{align*}
for some constant $C > 0$.
\end{proof}

\subsubsection*{Proof of Theorem~\ref{thm:clt}}

\begin{proof}
We follow the proof in \cite{polyak1992acceleration}. The update step is
\begin{align*}
\tth_{n} &= \tth_{n-1}-\eta_n\nabla F(\tth_{n-1}) + \eta_n(\bxi_n + \bgamma_n + \bvarepsilon_n)\\
&= (I_d-\eta_n H)\tth_{n-1}+ \eta_n(H\tth_{n-1}-\nabla F(\tth_{n-1})+\bxi_n + \bgamma_n + \bvarepsilon_n).
\end{align*}
We only need to prove the following three claims. First, the following term converges almost surely
\begin{align}
\label{polyakcondition1}
\sum_{i=1}^\infty \frac{1}{\sqrt{i}}\|H\tth_{i-1}-\nabla F(\tth_{i-1})+\bxi_i\| < \infty.
\end{align}
Furthermore, if the bounded sequence $\{w_i^n\}_{1 \leq i \leq n}$ satisfies $\frac{1}{n}\sum_{i=1}^n \|w_i^n\| \to 0$, then
\begin{align}
\label{polyakcondition2}
\frac{1}{n}\sum_{i=1}^n \|w_i^n (\bgamma_i + \bvarepsilon_i)\|^2 \to 0
\end{align}
in probability. When $t \rightarrow \infty$, the following convergence in probability,
\begin{align}
\label{polyakcondition3}
\frac{1}{\sqrt{n}}\sum_{i=1}^n (\bgamma_i + \bvarepsilon_i) \Longrightarrow \N(\0, Q).
\end{align}
Condition~\eqref{polyakcondition1} is used in Part 4 of the proof of Theorem 2 in \cite{polyak1992acceleration}. It implies the error term introduced by the KW estimator is negligible. To be more precise, Define
\[
\Delta^\prime_i = \Delta^\prime_{i-1} - \eta_i H \Delta^\prime_{i-1} + \eta_n(\bgamma_n + \bvarepsilon_n), \; \; \Delta_0^\prime = \tth_0.
\]
Then
\[
\sqrt{n}\|\bar\Delta^\prime_n - \bar\tth_n\| = \|\frac{1}{\sqrt{n}}\sum_{i=0}^{n-1}(H^{-1} + w^{n}_i)(H\tth_{i-1}-\nabla F(\tth_{i-1})+\bxi_i)\|.
\]
Condition~\eqref{polyakcondition1} means that $\sqrt{n}\|\bar\Delta^\prime_n - \bar\tth_n\| \to 0$ almost surely.

Conditions \eqref{polyakcondition2} and \eqref{polyakcondition3} appears in Part 1 of the proof of Theorem 1 in \cite{polyak1992acceleration} to establish the central limit theorem for $\bar\Delta^\prime_n$. Conditions \eqref{polyakcondition2} is implicitly assumed in the proof, but it is not trivial. It is easy to check that as long as those three conditions are satisfied, the rest of the proof of Theorem 2 in \cite{polyak1992acceleration} works in our problem without change. 

In the previous proof of Proposition~\ref{lemma:x}, we have derived the following estimates that hold when $\E\|\tth_n\|^2$ does not necessarily converges to 0.
\begin{align*}
\E_{n-1} \|\bgamma_n\|^2 &\leq C\|\tth_{n-1}\|^2 + Ch_n^2,\\
\E_{n-1} \|\bvarepsilon_n\|^2 &\leq C\|\tth_{n-1}\|^2 + C,\\
\|\bxi_n\| &\leq Ch_n,
\end{align*}
Assumption~\ref{assumption3} implies that
\begin{align*}
\|\nabla^2 F(\tth) - \nabla^2F(\y) \|^2 \leq L_h \|\tth-\y\|^2.
\end{align*}
By Taylor expansion, we can further derive the bound
\begin{align}\label{eq:use3}
\|H\tth_{i-1}-\nabla F(\tth_{i-1})\| \leq C\|\tth_{i-1}\|^2.
\end{align}
Combining the previous inequality with inequality~\eqref{eq:fact2}, we know that
\begin{align*}
\|H\tth_{i-1}-\nabla F(\tth_{i-1})+\bxi_i\| \leq C(\|\tth_{i-1}\|^2 + h_i^2),
\end{align*}
which indicates that
\begin{align*}
\sum_{i=1}^\infty \frac{1}{\sqrt{i}}\|H\tth_{i-1}-\nabla F(\tth_{i-1})+\bxi_i\| &\leq C \sum_{i=1}^\infty \frac{1}{\sqrt{i}}(\|\tth_{i-1}\|^2 + h_i^2) \\
&\leq C + C\sum_{i=1}^\infty \frac{1}{\sqrt{i}}\|\tth_{i-1}\|^2 < \infty,
\end{align*}
where the last inequality follows from inequality~\ref{eq:asconv}. So condition~\eqref{polyakcondition1} holds. 

Because $\E_{i-1}\|\bgamma_i + \bvarepsilon_i\|^2 \leq C\|\tth_{n-1}\|^2 + C$ is almost surely bounded, for any $\varepsilon$, there exists $N$, such that
\[
\Pr\{C\|\tth_{n-1}\|^2 + C < N, \forall n > 0 \} > 1 - \varepsilon.
\]
So we have the following estimate
\begin{align*}
&\quad \E[\|\bgamma_i + \bvarepsilon_i\|^2 | C\|\tth_{n-1}\|^2 + C < N, \forall n >0] \\
& \leq \E[\|\bgamma_i + \bvarepsilon_i\|^2 I\{C\|\tth_{n-1}\|^2 + C < N, \forall n >0\}]/(1 - \varepsilon)\\
& \leq \E[\|\bgamma_i + \bvarepsilon_i\|^2 I\{C\|\tth_{i-1}\|^2 + C < N\}]/(1 - \varepsilon)\\
& \leq \E[\|\bgamma_i + \bvarepsilon_i\|^2 | C\|\tth_{i-1}\|^2 + C < N]/(1 - \varepsilon) \leq \frac{N}{1 - \varepsilon}.
\end{align*}
So in the event $\{C\|\tth_{n-1}\|^2 + C < N, \forall n > 0 \}$, $\E \|\bgamma_i + \bvarepsilon_i\|^2$ is bounded, so $\frac{1}{n}\sum_{i=1}^n \|w_i^n (\bgamma_i + \bvarepsilon_i)\|^2 \to 0$ in probability conditioned on this event. Because $\varepsilon$ can be arbitrarily small, $\frac{1}{n}\sum_{i=1}^n \|w_i^n (\bgamma_i + \bvarepsilon_i)\|^2 \to 0$ in probability. So condition~\eqref{polyakcondition2} holds. 

To prove condition~\eqref{polyakcondition3}, it suffices to verify that,
\begin{align*}
\frac{1}{\sqrt{n}}\sum_{i=1}^n \bvarepsilon_i \Longrightarrow \N(\0, Q).
\end{align*}
By martingale central limit theorem \cite[Theorem 8.2.4]{durrett2019probability}, we only need to verify two conditions,
\begin{align}
\label{cltcondition1}
\frac{1}{n}\sum_{i=1}^n\E_{i-1}[\bvarepsilon_i \bvarepsilon_i^\top] \rightarrow Q, \\
\label{cltcondition2}
\frac{1}{n}\sum_{i=1}^n\E_{i-1}\left[\|\bvarepsilon_i\|^2 \1_{\|\bvarepsilon_i\|>a\sqrt{n}}\right] \rightarrow 0,
\end{align}
in probability for all $a>0$.

Notice that \eqref{eq:estcov} is equivalent to the following inequality,
\begin{align} \label{expectationoveru}
\| \E_{n-1} \bvarepsilon_{n} \bvarepsilon_n^\top -  \E\v \v^\top S \v \v^\top \| \leq C (\|\tth_{n-1}\|+h_n)(1 + \|\tth_{n-1}\|^2+h_n^2).
\end{align}
Thus $\E_{n-1}[\bvarepsilon_n \bvarepsilon_n^\top]$ converges almost surely to $Q$ and condition~\eqref{cltcondition1} holds.

Now consider the quantity in \eqref{cltcondition2}, by Proposition~\ref{lemma:x},
\begin{align*}
\E_{i-1}\left[\|\bvarepsilon_i\|^2 \1_{\|\bvarepsilon_i\|>a\sqrt{n}}\right] \leq \left[\E_{i-1}\left[\|\bvarepsilon_i\|^{2+\delta}\right] \right]^{\frac{2}{2+\delta}} \left[\E_{i-1}\left[ \1_{\|\bvarepsilon_i\|>a\sqrt{n}}\right]\right]^\frac{\delta}{2+\delta}.
\end{align*}
Note that
\begin{align*}
\E_{i-1}\left[\1_{\|\bvarepsilon_i\|>a\sqrt{n}}\right] = \P_{i-1}\left(\|\bvarepsilon_i\|>a\sqrt{n}|\tth_{i-1} \right) \leq \frac{1}{a\sqrt{n}}\E_{i-1} \|\bvarepsilon_i\|.
\end{align*}
Therefore, it can be bounded by
\begin{align*}
\E_{i-1}\left[\|\bvarepsilon_i\|^2 \1_{\|\bvarepsilon_i\|>a\sqrt{n}}\right] \leq C \left(\frac{1}{a\sqrt{n}}\right)^\frac{\delta}{2+\delta} \left(1+\|\tth_{i-1}\|^{2+\delta}\right)^\frac{2}{2+\delta} \left(1+\|\tth_{i-1}\|\right)^\frac{\delta}{2+\delta},
\end{align*}
The sum can be bounded by
\begin{align*}
\frac{1}{n}\sum_{i=1}^n\E_{i-1}\left[\|\bvarepsilon_i\|^2 \1_{\|\bvarepsilon_i\|>a\sqrt{n}}\right] &\leq C \frac{1}{n}\sum_{i=1}^n \left(\frac{1}{a\sqrt{n}}\right)^\frac{\delta}{2+\delta} \left(1+\|\tth_{i-1}\|^{2+\delta}\right)^\frac{2}{2+\delta} \left(1+\|\tth_{i-1}\|\right)^\frac{\delta}{2+\delta}\\
&\leq Cn^{-1-\frac{\delta}{2(2+\delta)}} \sum_{i=1}^n \left(1+\|\tth_{i-1}\|^{2+\delta}\right)^\frac{2}{2+\delta} \left(1+\|\tth_{i-1}\|\right)^\frac{\delta}{2+\delta}\\
&\leq Cn^{-1-\frac{\delta}{2(2+\delta)}} \sum_{i=1}^n (1+\|\tth_{i-1}\|^{2+\frac{\delta}{2+\delta}})\\
&\leq Cn^{-\frac{\delta}{2(2+\delta)}} + Cn^{-1-\frac{\delta}{2(2+\delta)}} \sum_{i=1}^n \|\tth_{i-1}\|^{2+\frac{\delta}{2+\delta}}.
\end{align*}
The first term converges to 0. The inequality~\eqref{eq:asconv} implies
\[
\sum_{i=1}^\infty \frac{\|\tth_{i-1}\|^{2+\frac{\delta}{2+\delta}}}{i^{\frac{1}{2}+\frac{\delta}{4(2+\delta)}}}< \infty.
\]
The Kronecker's lemma implies that for any $0<b_1\leq b_2 \leq \dots$ diverges to infinity,
\[
\frac{1}{b_n}\sum_{i=1}^n \frac{b_i \|\tth_{i-1}\|^{2+\frac{\delta}{2+\delta}}}{i^{\frac{1}{2}+\frac{\delta}{4(2+\delta)}}} \to 0.
\]
Take $b_i = i^{\frac{1}{2}+\frac{\delta}{4(2+\delta)}}$ and we get
\[
n^{-\frac{1}{2}-\frac{\delta}{4(2+\delta)}}\sum_{i=1}^n \|\tth_{i-1}\|^{2+\frac{\delta}{2+\delta}} \to 0.
\]
and $\frac{1}{n}\sum_{i=1}^n\E_{i-1}\left[\|\bvarepsilon_i\|^2 \1_{\|\bvarepsilon_i\|>a\sqrt{n}}\right] \to 0$ almost surely.
\end{proof}

\subsubsection*{Proof of Proposition~\ref{cor:clt0}}

\begin{proof}
For $\Qg$, let $\z \sim \mathcal{N}(\0, I_d)$, and we now calculate $\E \z \z^\top S \z \z^\top$. The $(i, i)$-th entry is
\begin{align*}
\E \sum_{j,k} z_i z_j S_{jk} z_k z_i = \sum_{j\neq i}S_{jj} + 3S_{ii} = 2S_{ii} + \tr(S).
\end{align*}
For $i\neq j$, the $(i, j)$th entry is
\begin{align*}
\E \sum_{k,l} z_i z_k S_{kl} z_l z_j = 2S_{ij}.
\end{align*}
So $\E \z \z^\top S \z \z^\top = 2S + \tr(S) I_d$.

For $\Qs$, let $\v$ be sampled from the uniform distribution on the sphere $\|\v\| = d$. The Gaussian vector $\z$ can be decomposed into independent radius part and spherical part,
\begin{align*}
\E[\z\z^\top] = \E \left[\|\z\|^2 \frac{\z}{\|\z\|} \frac{\z^\top}{\|\z\|} \right] &= \ \E \v\v^\top, \\
\E[\z\z^\top S \z\z^\top] = \E \left[\|\z\|^4 \frac{\z}{\|\z\|} \frac{\z^\top}{\|\z\|} S \frac{\z}{\|\z\|} \frac{\z^\top}{\|\z\|} \right] &= \frac{d+2}{d}\E \v\v^\top S \v \v^\top.
\end{align*}
Now we have
\begin{align*}
\E \v \v^\top  = I_d, \; \; \E \v \v^\top S \v \v^\top = \frac{d}{d+2}(2S+\tr(S)I_d).
\end{align*}
For $\Qu$, let $\u$ obey the uniform distribution on $\{\sqrt{d}e_1, \dots, \sqrt{d}e_d\}$. By direct calculation, we have
\begin{align*}
\E \u \u^\top S \u \u^\top = \sum_{j=1}^d \frac1d \cdot d ^2 S_{jj}=d \ \diag(S).
\end{align*}

The final two cases for $\Qu, \Qp$ can also be verified by direct calculation.
\end{proof}

\subsection{Illustration of choices of directions $\Pv$}
\label{subsec:suppchoice}

We first note that $\Qg\succ\Qs$ regardless of the dimension $d$ and Gram matrix $S$. Intuitively, when the direction $\v$ is generated by Gaussian \hyperref[tg]{\tg}, it can be decomposed into two independent random variables: the radical part $\| \v\|$ and the spherical part $\v / \| \v\|$. The spherical part $\v / \| \v\|$ follows the same distribution as the uniform distribution on the sphere with radius $d$ (which is identical to \hyperref[ts]{\ts}). The extra randomness in the radical part $\| \v\|^2 \sim \chi^2(d)$ leads to a larger magnitude of $Q$ compared to that of \hyperref[ts]{\ts}. Therefore the {\akw} estimator with Gaussian directions \hyperref[tg]{\tg} is always inferior to that with spherical directions \hyperref[ts]{\ts}, asymptotically. However, for the other candidates, they are not directly comparable, and the optimal choice of $\Pv$ depends on the optimality criterion, and Gram matrix $S$.

As a simple illustration, we consider $S = \diag(1, r_0)$ for some $r_0>0$. We have
\begin{enumerate}[(i)]
\item[{\hyperref[ts]{\ts}}] Spherical: $\Qs=\diag\left(\frac{r_0+3}{2},\frac{3r_0+1}{2}\right)$.
\item[{\hyperref[ti]{\ti}}] Uniform in a natural coordinate basis: $\Qi = \diag(2,2r_0)$.
\item[{\hyperref[tu]{\tu}}] Uniform in an arbitrary orthonormal basis $U$: when $U = (\cos\omega,\sin\omega; -\sin\omega, \cos\omega)$ and $\omega=0$, we have $\Qu=\Qi=\diag(2,2r_0)$; when $\omega=\pi/4$, we have $\Qu=\diag(1+r_0, 1+r_0)$.
\item[{\hyperref[tp]{\tp}}] Non-uniform in a natural coordinate basis: $\diag\left(\frac{1}{p_1},\frac{r_0}{1-p_1}\right), p_1 \in (0,1)$.
\end{enumerate}

From the above we can see that, the choices of the distribution of direction vectors $\Pv$ depends on the optimality-criteria on comparing the covariance matrices. Specifically in the above example, if one seeks to minimize
\begin{itemize}
\item the trace of covariance matrix, we have
\begin{align*}
\tr(\Qs)=\tr(\Qi)=\tr(\Qu)=2+2r_0, \; \; \tr(\Qp) = \frac{1}{p_1} + \frac{r_0}{1-p_1},
\end{align*}
and the optimal distribution that minimizes the trace depends on the value of $p_1$.
\item the determinant of covariance matrix, we have
\begin{alignat*}{2}
&\det(\Qs)=\frac{3r_0^2+10r_0+3}{4},  &&\det(\Qi)=4r_0, \\
&\det(\Qu)=\frac{-\cos(4 \omega) (r_0 - 1)^2+ r_0^2+6r_0+ 1}{2}, \;\;&&\det(\Qp) = \frac{r_0}{p_1(1-p_1)}.
\end{alignat*}
By a simple derivation, we have
$\det(\Qs)\geq \det(\Qu)\geq \det(\Qi)$ and $\det(\Qp) \geq \det(\Qi)$.

\item the operator norm of covariance matrix, i.e., the largest eigenvalue, we have
\begin{alignat*}{2}
&\lambda_{\max}(\Qs)=\frac{r_0+3}{2},&&\lambda_{\max}(\Qi)=2,\\
&\lambda_{\max}(\Qp) = \max \left\{\frac{1}{p_1}, \frac{r_0}{1-p_1}\right\}, \;\;&&\lambda_{\max}(\Qu)=r_0+1+(1-r_0)\left|\cos(2\omega)\right|.
\end{alignat*}
The smallest operator norm for $\Qp$ is given by $p_1 = \frac{1}{1+r_0}$. When $r_0\leq 1$, and $0\leq \omega\leq \pi/6$, we have $ \lambda_{\max}(\Qi)\geq\lambda_{\max}(\Qu)\geq \lambda_{\max}(\Qs) \geq \lambda_{\max}(\Qp)$. When $r_0\geq 1$, and $0\leq \omega\leq \pi/6$, we have $ \lambda_{\max}(\Qp)\geq  \lambda_{\max}(\Qs)\geq\lambda_{\max}(\Qu)\geq \lambda_{\max}(\Qi)$. For other choices of $\omega$, we can obtain a comparison analogously.
%
\end{itemize}

\begin{figure}[!t]
  \centering
  \includegraphics[width=0.6\textwidth]{./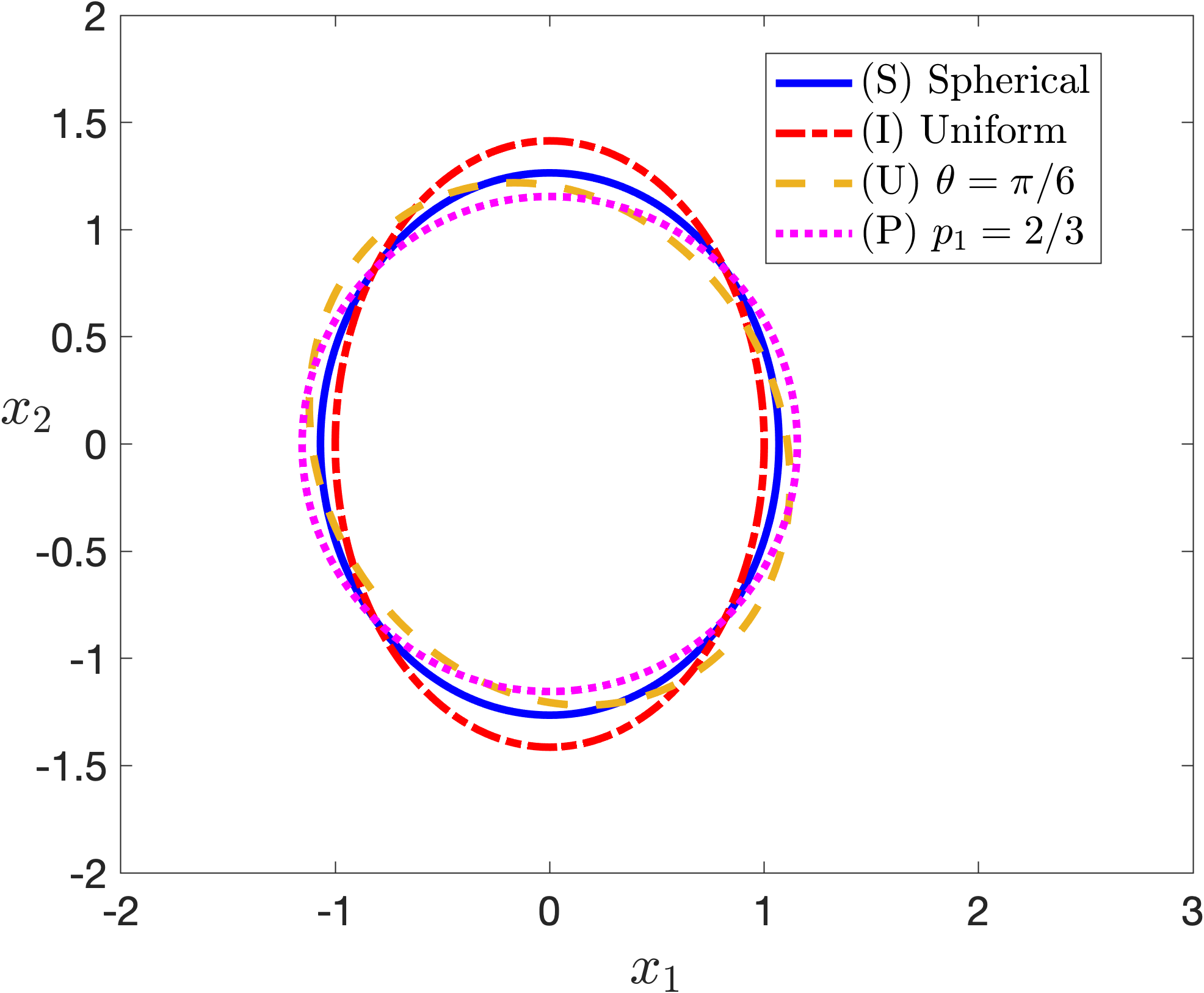}
  \caption{Comparison of $Q$ matrices under different direction distributions $\Pv$ when $S = \diag(1, 1/2)$.}
  \label{fig-1}
\end{figure}
In general, it is natural to  use Loewner order to compare two positive semi-definite matrix $A, B \in \R^{d \times d}$, i.e., $A \succeq B$ if $\x^\top A \x \geq \x^\top B \x$ for any $\x \in \R^d$. It is equivalent to say, for any positive constant $c > 0$, the ellipsoid $\{\x \in \R^d: \x^\top A \x \leq c\}$ contains the ellipsoid $\{\x \in \R^d: \x^\top B \x \leq c\}$. To better illustrate the result, we consider the 2-dimensional case where $S = \diag(1, 1/2)$ and plot the ellipse $\{\x \in \R^2: \x^\top Q^{(\cdot)} \x = 2\}$. In Figure~\ref{fig-1}, we compare $\Qs$, $\Qi$ (as a special case of $\Qu$ with $\theta=0$), $\Qu$ with $\theta= \frac{\pi}{6}$, and $\Qp$ with $p_1=\frac{1}{1+r_0} = \frac{2}{3}$. As can be inferred from the plot, none of the ellipsoids contain any other ellipsoids.

As shown in this illustrative example, there is no unique optimal direction distribution, and a practitioner might choose a search direction based on her favorable optimality criterion. 

Lastly, in the following Remark \ref{rmk:sample-with-p}, we show that, if the optimality criterion degenerates to one dimension, one may utilize the non-uniform distribution ${\hyperref[tp]{\tp}}$ to obtain a smaller limiting variance.
In particular, consider the application where we are only interested in the first coordinate of $\tth^\star$, in which cases the optimality criterion of the limiting variance is on $\theta^\star_1$. We will show that the {\akw} estimator with the non-uniform distribution $\tp$ achieves the Cram\'{e}r-Rao lower bound.

\begin{remark} \label{rmk:sample-with-p}
Assume the population loss function $F(\cdot)$ has Hessian $H = I_d$. Considering a non-uniform sampling $\tp$ from $\{\e_k\}_{k=1}^d$ for the direction distribution $\Pv$. We choose $\v=\e_k$ with probability $p_k$ for $k=1,2,\dots, d$, where $p_1=1-p$ for some constant $p\in(0,1]$ and $p_k=p/(d-1)$ for $k\neq 1$. Define i.i.d. random variables $k_n$ where $k_n = 1$ with probability $1-p$ and $k_n = 2,\ldots,d$ uniformly with probability $p/(d-1)$. The gradient estimator is defined by,
\begin{align*}
\widehat{g}(\tth_{n-1}; \zet_n) = \frac{f(\tth_{n-1}+h_n\e_{k_n}; \zet_n)-f(\tth_{n-1};\zet_n)}{h_n p_n} \e_{k_n},
\end{align*}
where $p_n = 1-p$ if $k_n = 1$, $p_n = p/(d-1)$ for $k_n > 1$.  By the same argument as Proposition~\ref{cor:clt0}, the variance for $\overline{\tth}_n$ in the direction $\e_1$ is,
\begin{align*}
n \mathrm{Var}\left(\e_1^\top (\overline{\tth}_n - \tth^\star) \right) = \frac{S_{11}}{1-p}.
\end{align*}
As $p\rightarrow 0$, we approximately obtain the optimal variance given by Cram\'{e}r-Rao lower bound in the direction $\e_1$. However, in order to approach the optimal variance in the direction $\e_1$, we increase the magnitude of variance in all other directions, where the variance in other directions is given by $n \mathrm{Var}\left(\e_k^\top (\overline{\tth}_n - \tth^\star) \right) = (d-1)S_{kk}/p$ for $k=2,\ldots, d$.
\end{remark}

\subsection{Nonsmooth loss function}
\label{subsec:quantile}

\subsubsection*{Proof of Theorem \ref{thm:clt-qt}}
\begin{proof} 

We first prove that Theorem \ref{thm:clt} is valid under Assumptions \ref{assumption1p}, \ref{assumption2}, \ref{assumption4}, and \ref{assumption5} below.

\begin{assumption}\label{assumption5}
There exist $C>0, L_h>0, \delta_2 > 0, Q$ such that for any $\tth$ in the $\delta_2$-ball centered at $\tth^\star$, and for $h$ sufficiently small,
\[
\left\|\E\left[\widehat g_{h,\v}(\tth;\zet) \widehat g_{h,\v}(\tth;\zet) ^\top \right] - Q\right\|\leq C(h+\|\tth - \tth^\star\|),\qquad
\|\nabla^2 F(\tth) - H \| \leq L_h \|\tth-\tth^\star\|,
\]
where $H$ is the Hessian matrix of the population loss function $F(\cdot)$, i.e.,
$H=\nabla^2 F(\tth^\star)$.
\end{assumption}

Recall that, Proposition \ref{lemma:x} holds under Assumptions \ref{assumption1p}, \ref{assumption2}, \ref{assumption4}. The only two places that Assumption \ref{assumption3} is used in the proof of Theorem \ref{thm:clt} are \eqref{eq:use3} and \eqref{expectationoveru}.

The first part of assumption~\ref{assumption5} is a slightly different version of Lemma \ref{lem:variance} with a matching leading term. It implies that
\begin{align*}
&\quad\|\E_{n-1} \bvarepsilon_{n} \bvarepsilon_n^\top -  Q \|\\
 &= \|\E_{n-1}\left[\widehat g_{h_n,\v}(\tth_{n-1};\zet_n) \widehat g_{h_n,\v}(\tth_{n-1};\zet_n) ^\top \right]  -(\frac{1}{h_n}\Delta_{h_n, \v}F(\tth_{n-1}) \v) (\frac{1}{h_n}\Delta_{h_n, \v}F(\tth_{n-1}) \v)^\top -  Q \|\\
&\leq \left\|\E\left[\widehat g_{h_n,\v}(\tth_{n-1};\zet_n) \widehat g_{h_n,\v}(\tth_{n-1};\zet_n) ^\top \right] - Q\right\| + \|\E_{n-1}(\frac{1}{h_n}\Delta_{h_n, \v}F(\tth_{n-1}) \v) (\frac{1}{h_n}\Delta_{h_n, \v}F(\tth_{n-1}) \v)^\top\|\\
&\leq C(\|\tth_{n-1}\| + h_n) + C(\|\tth_{n-1}\|^2 + h_n^2).
\end{align*}
So \eqref{expectationoveru} is still true with the same leading term. The second part of assumption~\ref{assumption5} implies \eqref{eq:use3} by Taylor expansion. So Assumption~\ref{assumption5} can replace Assumption~\ref{assumption3} in the previous proof.

It finally suffices to show that Assumption \ref{assumption6} implies Assumptions \ref{assumption1p}, \ref{assumption2} and \ref{assumption5}, where $\nabla f(\tth;(\x,y))$ is defined as a subgradient of the loss function $f(\tth;\zeta)=\rho(y-\x^\top\tth)$, particularly, $\x \psi(y-\x^\top \tth)$. Notice that
\begin{align*}
\E\left[ \widehat g_{h,\v}(\tth;\zet) \widehat g_{h,\v}(\tth;\zet) ^\top \right]  &= \E \frac{\v\v^\top}{h^2}\left[\rho\big(y-\x^\top (\tth+h\v) \big)-\rho\big( y-\x^\top \tth \big)\right]^2\\
&= \E \frac{\v\v^\top}{h^2}\left[\rho\big(\varepsilon-\x^\top (\tth+h\v) \big)-\rho\big( \varepsilon-\x^\top \tth \big)\right]^2.
\end{align*}
Define the function $D(h, \tth):= \E \v\v^\top \left[\rho\big(\varepsilon-\x^\top (\tth+h\v) \big)-\rho\big( \varepsilon-\x^\top \tth \big)\right]^2$. Direct computation shows that
\[
D(0, 0) = 0, \frac{\partial}{\partial h}D(0, 0) = 0, \frac{\partial^2}{\partial h^2}D(0, 0) = 2\E[\psi^2(\varepsilon) \v\v^\top\x\x^\top\v\v^\top] = 2Q,
\]
Because $\varepsilon$ has $C^3$ density,  $\frac{\partial^2}{\partial h^2}D(h, \tth)$ is actually differentiable (jointly with respect to both variables). By Taylor expansion, for sufficiently small $h$ and $\tth$, $\|\frac{\partial^2}{\partial h^2}D(h, \tth)-2Q\| \leq C(\|\tth\| + h)$, which implies
\[
\| \E\left[ \widehat g_{h,\v}(\tth;\zet) \widehat g_{h,\v}(\tth;\zet) ^\top \right] - Q\| = \|D(h, \tth)/h^2 - Q\|\leq C(\|\tth\| + h),
\]
for sufficiently small $h$ and $\tth$.

Furthermore, 
\begin{align*}
\nabla^2 F(\tth) &= \nabla^2 \E [\rho(y-\x^\top \tth)]\\
&= \nabla \E [\x \psi(\x^\top (\tth^\star - \tth)+\varepsilon)]\\
&= \nabla \E [\x \phi(\x^\top (\tth^\star - \tth))]\\
&= \E [\x \x^\top \phi'(\x^\top (\tth^\star - \tth))].
\end{align*}
Since $\rho$ is convex, $\psi, \phi$ are monotone. Therefore $\phi' \geq0$ and $\nabla^2 F(\tth) \geq 0$. Since $\nabla^2 F(\tth^\star) = H \succ 0$, and $\phi'$ is upper bounded, Assumption~\ref{assumption1p} holds.  Next,
\begin{align*}
\|\nabla^2 F(\tth) - H\| &= \|\E [\x \x^\top (\phi'(\x^\top (\tth^\star - \tth)) - \phi'(0))]\|\\
&\leq C \|\E [\x \x^\top |\x^\top (\tth^\star - \tth)|]\|\\
&\leq C \|(\tth^\star - \tth)\|.
\end{align*}
Thus Assumption~\ref{assumption5} holds. Lastly, \begin{align*}
&\quad \E \|\x \psi(y-\x^\top \tth) - \nabla F(\tth)\|^4\\
&\leq C\E \|\x \psi(y-\x^\top \tth)\|^4\\
&\leq C \E\|\x \psi(\x^\top (\tth^\star - \tth)+\varepsilon)\|^4\\
&\leq C \E\|\x (|\x^\top (\tth^\star - \tth)+\varepsilon| +1)\|^4\\
&\leq C (\|\theta^\star - \theta\|^4 +1).
\end{align*}
So Assumption~\ref{assumption2} holds with $\delta = 2$.
\end{proof}

\subsection{Multi-query approximation}
\label{subsec:multi-query}

\subsubsection*{Proof of Theorem~\ref{thm:multiple_1}}
\begin{proof}
The convergence result can be obtained as in the two function evaluation case. The only difference is the following calculation:
\begin{align*}
\E \left(\frac{1}{m}\sum_{i=1}^m \v_i \v_i^\top\right)S\left(\frac{1}{m}\sum_{i=1}^m \v_i \v_i^\top\right) = \frac{1}{m}\E \v \v^\top S \v \v^\top +\frac{m-1}{m}S,
\end{align*}
which implies the desired result.
\end{proof}

\subsubsection*{Proof of Theorem~\ref{thm:multiple_2}}
\begin{proof}
It is clear that $Q_m = S$ for $m =d$. We need to compute the quantity
\begin{align*}
Q_m = \frac{d^2}{m^2}\E \bigg(\sum_{i=1}^m \v_i \v_i^\top\bigg) S \bigg(\sum_{i=1}^m \v_i \v_i^\top\bigg),
\end{align*}
which can be simplifies to
\[
Q_m = \frac{d^2}{m^2}\E \bigg(\sum_{i=1}^m \v_i \v_i^\top S  \v_i \v_i^\top\bigg) + \frac{d^2}{m^2}\E \bigg(\sum_{i\neq j} \v_i \v_i^\top S  \v_j \v_j^\top\bigg).
\]
By symmetry, it equals to
\[
Q_m = \frac{d^2}{m}\E \v_1 \v_1^\top S  \v_1 \v_1^\top + \frac{d^2(m-1)}{m}\E \v_1 \v_1^\top S  \v_2 \v_2^\top.
\]

We know $\E \v_1 \v_1^\top S  \v_1 \v_1^\top = \frac{1}{d^2} Q$ and $Q_d = S$. So we can solve for $\E \v_1 \v_1^\top S  \v_2 \v_2^\top$ and get
\[
\E \v_1 \v_1^\top S  \v_2 \v_2^\top = \frac{1}{d(d-1)}(\frac{1}{d}Q - \diag(S)).
\]

Therefore,
\begin{align*}
Q_m &= \frac{1}{m}Q + \frac{d(m-1)}{m(d-1)}(\frac{1}{d}Q-\diag S)\\
&= \frac{d-m}{m(d-1)}Q+ \frac{d(m-1)}{m(d-1)}S. \qedhere
\end{align*}
\end{proof}


\section{Proofs of Results in Section~\ref{sec:infer}}
\label{sec:app-c}

\subsection{Proof of Lemma~\ref{lemma:hessian}}
\label{sec:G_con}
Before we come to the proof of the Hessian estimator~\eqref{eq:hessian-est2} in Lemma~\ref{lemma:hessian}, we first introduce a naive method to estimate Hessian matrix $H$ which we omit in the main text.

Inspired by the previous gradient estimator, we can estimate the Hessian matrix $H$ by the following
\begin{align*}
\widehat{G}_n
= \frac{1}{m h_n^2} \sum_{j=1}^{m} \left[\Delta_{h_n \v_n^{(j)}} f(\tth_{n-1} + h_n \u_n^{(j)}; \zet_n) - \Delta_{h_n \v_n^{(j)}} f(\tth_{n-1}; \zet_n)\right]\u_n^{(j)} \v_n^{(j)\top},
\end{align*}
where $\{\u_n^{(j)}\}_{j=1}^m$ and $\{\v_n^{(j)}\}_{j=1}^m$ are \emph{i.i.d.} random vectors and $m > 0$ is a parameter (which might be different from $m$ in the previous section). Therefore, our naive Hessian estimator is,
\begin{align}
\label{eq:hessian-est1}
\widetilde{H}_n = \frac{1}{n} \sum_{i=1}^n \frac{\widehat{G}_i+\widehat{G}_i^\top}{2}.
\end{align}
where the $(\widehat{G}_i + \widehat{G}_i^\top)/2 \,$ term ensures the symmetry of $\widetilde{H}_n$. The function query complexity is $\O(m)$ per step for this Hessian estimation.

Now we restate our Lemma~\ref{lemma:hessian} for the both estimators \eqref{eq:hessian-est1} and \eqref{eq:hessian-est2}.
\begin{lemma}
\label{lemma:hessian2}
If Assumption \ref{assumption1p}, \ref{assumption2}, \ref{assumption3}, \ref{assumption4} hold, or Assumption \ref{assumption4}, \ref{assumption6} holds, then $\widetilde{H}_n$ converges in probability to $H$. 

If Assumption \ref{assumption1p}, \ref{assumption2} hold with $\delta_1=+\infty$ and $\delta_2$, then we have the following result for the Hessian estimator~\eqref{eq:hessian-est1},
\begin{align}
\E \| \widetilde{H}_n - H \|^2 \leq C_1 n^{-\alpha} + C_2\left(1+\frac{1}{m}\right)n^{-1}.
\end{align}
The Hessian estimator~\eqref{eq:hessian-est2} satisfies,
\begin{align}
\E \| \widetilde{H}_n - H \|^2 \leq C_1 n^{-\alpha} + C_2 p^{-1}n^{-1}.
\end{align}
\end{lemma}
\begin{proof}
We first assume Assumption \ref{assumption1p}, \ref{assumption2}, \ref{assumption3}, and \ref{assumption4} hold. In the case of naive Hessian estimator \eqref{eq:hessian-est1}, we decompose $\widetilde{H}_n - H$ for estimator~\eqref{eq:hessian-est1} as follows,
\begin{align}
\label{eq:hessian-decomp}
\widetilde{H}_n - H &= \frac{1}{n} \sum_{i=1}^n\frac{\widehat{G}_i+\widehat{G}_i^\top}{2} - H \notag \\
&= \frac{1}{n} \sum_{i=1}^n \left(\frac{\widehat{G}_i+\widehat{G}_i^\top}{2} - \left(\frac{1}{m}\sum_{j=1}^m\u_i^{(j)}\u_i^{(j)\top}\right) \nabla^2 f(\tth_{n-1}; \zet_n) \left(\frac{1}{m}\sum_{j=1}^m\v_i^{(j)}\v_i^{(j)\top}\right) \right) \notag \\
&\quad + \frac{1}{n} \sum_{i=1}^n \left(\left(\frac{1}{m}\sum_{j=1}^m\u_i^{(j)}\u_i^{(j)\top}\right) \nabla^2 f(\tth_{n-1}; \zet_n) \left(\frac{1}{m}\sum_{j=1}^m\v_i^{(j)}\v_i^{(j)\top}\right)- \nabla^2 f(\tth_{i-1}; \zet_i)\right) \notag\\
&\quad + \frac{1}{n} \sum_{i=1}^n \left[\nabla^2 f(\tth_{i-1}; \zet_i) - \nabla^2 f(\0; \zet_i)\right] + \frac{1}{n} \sum_{i=1}^n \left(\nabla^2 f(\0; \zet_i) - H\right).
\end{align}
For the first term in the decomposition~\eqref{eq:hessian-decomp},
\begin{align*}
&\quad \E_{n-1} \left[\| \frac{1}{h_n^2}\big[ f(\tth_{n-1} + h_n \u+ h_n \v; \zet_n) -  f(\tth_{n-1} + h_n \u; \zet_n)- f(\tth_{n-1} + h_n \v; \zet_n) \right.\\
&\quad + \left. \left. f(\tth_{n-1}; \zet_n) \big] \u\v^\top- \u\u^\top \nabla^2 f(\tth_{n-1}; \zet_n) \v \v^\top \right\|^2 \bigg| \u, \v\right]\\
&\leq \E_{n-1} \left[ \left\| \frac{1}{h_n^2} \u\u^\top \int_0^{h_n} \int_0^{h_n} \nabla^2f(\tth_{n-1}+s_1 \u +s_2 \v ;\zet_n) - \nabla^2 f(\tth_{n-1}; \zet_n) \intd s_1 \intd s_2 \v\v^\top \right\|^2 \bigg| \u, \v\right]\\
&\leq \frac{1}{h_n^2} \|\u\|^2\|\v\|^2\int_0^{h_n} \int_0^{h_n}\E_{n-1}\left[ \left\|\nabla^2f(\tth_{n-1}+s_1 \u +s_2 \v ;\zet_n) - \nabla^2 f(\tth_{n-1}; \zet_n) \right\|^2 \big| \u, \v\right] \intd s_1 \intd s_2 \\
&\leq \frac{C}{h_n^2} \|\u\|^2\|\v\|^2 \int_0^{h_n} \int_0^{h_n} \left\|s_1 \u +s_2 \v\right\|^2  \intd s_1 \intd s_2 \leq Ch_n^2 \|\u\|^2\|\v\|^2 (\|\u\|^2+\|\v\|^2).
\end{align*}
The above derivation implies that
\begin{align*}
\E \|\widehat{G}_n - \left(\frac{1}{m}\sum_{j=1}^m\u_i^{(j)}\u_i^{(j)\top}\right) \nabla^2 f(\tth_{n-1}; \zet_n) \left(\frac{1}{m}\sum_{j=1}^m\v_i^{(j)}\v_i^{(j)\top}\right)\| \leq Ch_n^2.
\end{align*}
Therefore, we can show that
\begin{align}
\label{hessian-est1}
& \quad \E \left\| \frac{1}{n} \sum_{i=1}^n \left(\frac{\widehat{G}_i+\widehat{G}_i^\top}{2} - \left(\frac{1}{m}\sum_{j=1}^m\u_i^{(j)}\u_i^{(j)\top}\right) \nabla^2 f(\tth_{i-1}; \zet_i) \left(\frac{1}{m}\sum_{j=1}^m\v_i^{(j)}\v_i^{(j)\top}\right) \right) \right\|^2 \notag\\
&\leq \E \left\| \frac{1}{n} \sum_{i=1}^n \left(\widehat{G}_i - \left(\frac{1}{m}\sum_{j=1}^m\u_i^{(j)}\u_i^{(j)\top}\right)  \nabla^2 f(\tth_{i-1}; \zet_i) \left(\frac{1}{m}\sum_{j=1}^m\v_i^{(j)}\v_i^{(j)\top}\right) \right) \right\|^2 \notag \\
&\leq C\frac{1}{n}\sum_{i=1}^n h_i^2 \leq Cn^{-2\gamma},
\end{align}
where in the first inequality, we use the fact that, $\widehat{G}_i$ and $\widehat{G}_i^\top$ has the same distribution.

For the second term, notice that
\begin{align*}
&\quad \E_{n-1} \left\| \left(\frac{1}{m}\sum_{j=1}^m\u_j\u_j^\top\right) \nabla^2 f(\tth_{n-1}; \zet_n) \left(\frac{1}{m}\sum_{j=1}^m\v_j\v_j^\top\right) - \nabla^2 f(\tth_{n-1}; \zet_n) \right\|^2\\
&\leq \E_{n-1} \left\|\frac{1}{m}\u_i\u_i^\top -I_d\right\|^2 \left\|\nabla^2 f(\tth_{n-1}; \zet_n)\right\|^2 \left\|\frac{1}{m}\v \v^\top - I_d\right\|^2 \\
& \quad + \E_{n-1} \left\|\frac{1}{m}\u_i\u_i^\top -I_d\right\|^2 \left\|\nabla^2 f(\tth_{n-1}; \zet_n)\right\|^2 + \E_{n-1} \left\|\nabla^2 f(\tth_{n-1}; \zet_n)\right\|^2 \left\|\frac{1}{m}\v \v^\top - I_d\right\|^2 \\
&\leq \frac{C}{m}\left(1+\|\tth_{n-1}\|^2\right).
\end{align*}
Furthermore, the second term is a sum of martingale difference sequence and we have
\begin{align}
\label{hessian-est2}
& \quad \E \| \frac{1}{n} \sum_{i=1}^n \left(\left(\frac{1}{m}\sum_{j=1}^m\u_i^{(j)}\u_i^{(j)\top}\right) \nabla^2 f(\tth_{n-1}; \zet_n) \left(\frac{1}{m}\sum_{j=1}^m\v_i^{(j)}\v_i^{(j)\top}\right)- \nabla^2 f(\tth_{i-1}; \zet_i)\right) \|^2 \notag\\
&= \frac{1}{n} \sum_{i=1}^n \E \|\left(\left(\frac{1}{m}\sum_{j=1}^m\u_i^{(j)}\u_i^{(j)\top}\right) \nabla^2 f(\tth_{n-1}; \zet_n) \left(\frac{1}{m}\sum_{j=1}^m\v_i^{(j)}\v_i^{(j)\top}\right)- \nabla^2 f(\tth_{i-1}; \zet_i)\right)\|^2 \notag \\
&\leq C\frac{1}{n^2}\sum_{i=1}^n \frac{1}{m}\left(1+\E\|\tth_{n-1}\|^2\right) \leq C\frac{1}{mn}.
\end{align}
For the third term in \eqref{eq:hessian-decomp}, we have
\begin{align}
\label{hessian-est3}
\E  \left\| \frac{1}{n} \sum_{i=1}^n \nabla^2 f(\tth_{i-1}; \zet_i) - \nabla^2 f(\0; \zet_i)\right\|^2 &\leq \frac{1}{n} \sum_{i=1}^n \E \left\| \nabla^2 f(\tth_{i-1}; \zet_i) - \nabla^2 f(\0; \zet_i) \right\|^2 \notag \\
&\leq \frac{C}{n} \sum_{i=1}^n \E \| \tth_i \|^2 \leq C n^{-\alpha}.
\end{align}
For the final term, we have
\begin{align}
\label{hessian-est4}
\E \left\|\frac{1}{n} \sum_{i=1}^n \nabla^2 f(\0; \zet_i) - H \right\|^2 &\leq  \frac{1}{n^2} \sum_{i=1}^n\E \left\| \nabla^2 f(\0; \zet_i) - H \right\|^2 \notag\\
& \leq  \frac{C}{n^2} \sum_{i=1}^n\E \left\| \nabla^2 f(\0; \zet_i)^2 - H^2 \right\| \leq C n^{-1},
\end{align}
where the second inequality is due to the fact that it is an equality in Frobenius norm.

Combine the previous estimates~\eqref{hessian-est1}, \eqref{hessian-est2}, \eqref{hessian-est3} and \eqref{hessian-est4}, our naive Hessian estimator satisfies,
\begin{align*}
\E \left\| \widetilde{H}_n- H \right\|^2 \leq Cn^{-\alpha}+ C(1+\frac{1}{m})n^{-1}.
\end{align*}

For the Hessian estimator~\eqref{eq:hessian-est2}, we need to distinguish the original version, denoted by $\hat{G}$, and the bernoulli sampling version, denoted by $\tilde{G}$. We then have the following decomposition,
\begin{align}
\label{eq:hessian-decomp2}
\widetilde{H}_n - H = &\frac{1}{n} \sum_{i=1}^n \frac{\widetilde{G}_i+\widetilde{G}_i^\top}{2} - H \notag \\
= &\frac{1}{n} \sum_{i=1}^n \frac{\widetilde{G}_i+\widetilde{G}_i^\top}{2} - \frac{\widehat{G}_i+\widehat{G}_i^\top}{2} + \frac{1}{n} \sum_{i=1}^n \left(\frac{\widehat{G}_i+\widehat{G}_i^\top}{2} - \nabla^2 f(\tth_{i-1}; \zet_i)\right) \notag \\
&+ \frac{1}{n} \sum_{i=1}^n \left[\nabla^2 f(\tth_{i-1}; \zet_i) - \nabla^2 f(\0; \zet_i)\right] + \frac{1}{n} \sum_{i=1}^n \nabla^2 f(\0; \zet_i) - H.
\end{align}
Given $\widehat{G}_n$, our Bernoulli sampling Hessian estimator $\widetilde{G}_n$ satisfies,
\begin{align*}
\E \left\|\widetilde{G}_n - \widehat{G}_n \right\|^2_{\Fro}
&= \E \left[\sum_{j=1}^d \sum_{k=1}^d \frac{1}{p} \left( \widehat{G}_n^{(jk)} B_n^{(jk)} - \widehat{G}_n^{(jk)}\right)^2\right] \\
&= \sum_{j=1}^d \sum_{k=1}^d   \E \left(\frac{1}{p} B_n^{(jk)} - 1 \right)^2 \left(\widehat{G}_n^{(jk)}\right)^2\\
&= \frac{1-p}{p} \sum_{j=1}^d \sum_{k=1}^d \E\left(\widehat{G}_n^{(jk)}\right)^2 =  \frac{1-p}{p} \| \widehat{G}_n \|_{\Fro}^2,
\end{align*}
where the entries of $B_n$ are \emph{i.i.d.} and follow a Bernoulli distribution, i.e., $B_n^{(k \ell)} \sim \mathrm{Bernoulli}(p)$, for some fixed $p \in (0,1)$. Here the second equality uses the fact that $B_i^{(jk)}$ are independent from each other. Therefore,
\begin{align*}
\E \left\| \frac{1}{n} \sum_{i=1}^n \widetilde{G}_i - \widehat{G}_i \right\|^2 \leq \E \left\| \frac{1}{n} \sum_{i=1}^n \widetilde{G}_i - \widehat{G}_i \right\|^2_{\Fro} \leq C \frac{1-p}{p} \, n^{-2} \sum_{i=1}^n \E \left\|\widehat{G}_i \right\|^2.
\end{align*}
With $1/t \sum_{i=1}^n\E \|\widehat{G}_i \|^2 \leq C + Cn^{-\alpha}$, the first term in decomposition~\eqref{eq:hessian-decomp2} satisfies,
\begin{align}
\label{hessian-est5}
\E \left\| \frac{1}{n} \sum_{i=1}^n \widetilde{G}_i - \widehat{G}_i \right\|^2 \leq C\frac{1-p}{p} n^{-1}.
\end{align}
Other terms can be bounded similarly as in the first case:
\begin{align}
\label{hessian-est6}
\E \left\| \frac{1}{n} \sum_{i=1}^n \frac{\widehat{G}_i+\widehat{G}_i^\top}{2} - \nabla^2 f(\tth_{i-1}; \zet_i) \right\|^2 &\leq C n^{-2\gamma}, \\
\label{hessian-est7}
\E  \left\| \frac{1}{n} \sum_{i=1}^n \nabla^2 f(\tth_{i-1}; \zet_i) - \nabla^2 f(\0; \zet_i)\right\|^2 &\leq C n^{-\alpha}, \\
\label{hessian-est8}
\E \left\|\frac{1}{n} \sum_{i=1}^n \nabla^2 f(\0; \zet_i) - H \right\|^2 &\leq C n^{-1}.
\end{align}
Combine inequality~\eqref{hessian-est5}, \eqref{hessian-est6}, \eqref{hessian-est7} and \eqref{hessian-est8}, we obtain the desired result for Hessian estimator~\eqref{eq:hessian-est2}.

Now we assume Assumption \ref{assumption4}, \ref{assumption6} hold. For estimator~\eqref{eq:hessian-est1}, we have
\begin{align*}
\widetilde{H}_n - H &= \frac{1}{n} \sum_{i=1}^n\frac{\widehat{G}_i+\widehat{G}_i^\top}{2} - H  \\
&= \frac{1}{n} \sum_{i=1}^n(\frac{\widehat{G}_i+\widehat{G}_i^\top}{2} - \E\frac{\widehat{G}_i+\widehat{G}_i^\top}{2})\\
&\quad + \frac{1}{n} \sum_{i=1}^n (\E\frac{\widehat{G}_i+\widehat{G}_i^\top}{2} - \nabla^2F(\tth_{i-1}))\\
&\quad + \frac{1}{n} \sum_{i=1}^n (\nabla^2F(\tth_{i-1}) - \nabla^2F(\tth))).
\end{align*}
In the case of quantile regression, $\widehat{G}$ has the form
\begin{align*}
&\quad \frac{1}{h^2}\big[ f(\tth + h \u+ h \v; \zet) -  f(\tth + h \u; \zet)- f(\tth + h \v; \zet) + f(\tth; \zet) \big]\u \v^\top \\
&= \frac{1}{h^2}\big[ \rho(y-\x^\top (\tth + h \u+ h \v)) - \rho(y-\x^\top (\tth + h \u)) - \rho(y-\x^\top (\tth + h \v)) + \rho(y-\x^\top \tth)\big]\u \v^\top.
\end{align*}
Let $D(h) = \E [\rho(y-\x^\top (\tth + h \u+ h \v)) - \rho(y-\x^\top (\tth + h \u)) - \rho(y-\x^\top (\tth + h \v)) + \rho(y-\x^\top \tth)]\u \v^\top$. Direct computation shows that $D(0) = 0, D'(0) = 0, D''(0) = 2 \E \x \x^\top \phi'(\x^\top(\tth^\star-\tth)) = 2\nabla^2 F(\tth), \|D''(h)-D''(0)\| \leq Ch$. So we have $\|D(h) - \nabla^2 F(\tth) h^2 \| \leq Ch^3$. This imples
\[
\|\E \hat{G}_n - \nabla^2 F(\tth_{n-1})\| \leq Ch_n,
\]
which converges to 0. So the second term
\[
\|\frac{1}{n} \sum_{i=1}^n (\E\frac{\widehat{G}_i+\widehat{G}_i^\top}{2} - \nabla^2(\tth_{i-1}))\| \leq \frac{C}{n}\sum_{i=1}^n h_i \to 0.
\]
The third term clearly converges to 0, so the only thing left is the first term.
\begin{align*}
&\quad \E\|\frac{1}{n} \sum_{i=1}^n(\frac{\widehat{G}_i+\widehat{G}_i^\top}{2} - \E\frac{\widehat{G}_i+\widehat{G}_i^\top}{2})\|^2\\
&\leq \frac{1}{n^2} \sum_{i=1}^n \E\|\frac{\widehat{G}_i+\widehat{G}_i^\top}{2} - \E\frac{\widehat{G}_i+\widehat{G}_i^\top}{2})\|^2\\
&\leq \frac{1}{n^2} \sum_{i=1}^n \E\|\widehat{G}_i\|^2.
\end{align*}
Because $\varepsilon$ has $C^3$ density, the function $\phi(u) = \E[\psi(u+\varepsilon)]$ is actually also $C^3$. This means that it is possible to take forth derivative of the function
\[
\E\|[\rho(y-\x^\top (\tth + h \u+ h \v)) - \rho(y-\x^\top (\tth + h \u)) - \rho(y-\x^\top (\tth + h \v)) + \rho(y-\x^\top \tth)]\u \v^\top\|^2,
\]
with respect to $h$. After a routine computation, we can find that it is the form $Ch^4 + O(h^5)$ for $h$ sufficiently small, which means that
\[
\E\|\widehat{G}_i\|^2 \leq C,
\]
so the first term converges to 0. The computation for estimator~\eqref{eq:hessian-est2} is similar.
\end{proof}

\subsubsection*{Proof of Theorem~\ref{thm:plugin}}

To prove Theorem~\ref{thm:plugin}, we first present the following lemma on the error rate of $\widehat{Q}_n$.
\begin{lemma}
\label{lemma:Gram}
If Assumption \ref{assumption1p}, \ref{assumption2}, \ref{assumption3}, \ref{assumption4} hold, or Assumption \ref{assumption4}, \ref{assumption6} holds, then $\widehat{Q}_n$ converges in probability to $Q$. If Assumption \ref{assumption1p}, \ref{assumption2} hold with $\delta_1=+\infty$ and $\delta=2$, we have $\widehat{Q}_n$ has the following convergence rate,
$\E\|\widehat{Q}_n-Q\| \leq Cn^{-\alpha/2}.
$\end{lemma}

\begin{proof}
First Assume Assumption \ref{assumption1p}, \ref{assumption2}, \ref{assumption3}, and \ref{assumption4} hold with $\delta_1=\infty$. Recall the update rule,
\begin{align*}
\tth_{n}= \tth_{n-1}-\eta_n\nabla F(\tth_{n-1}) + \eta_n(\bxi_n + \bgamma_n + \bvarepsilon_n),
\end{align*}
and our Gram matrix estimate $\widehat{Q}_n$ is,
\begin{align*}
\widehat{Q}_n = \frac{1}{n}\sum_{i=1}^n (\nabla F(\tth_{i-1}) -\bxi_i - \bgamma_i - \bvarepsilon_i) (\nabla F(\tth_{i-1}) -\bxi_i - \bgamma_i - \bvarepsilon_i)^\top.
\end{align*}
It can be seen that we have the following estimates,
\begin{align*}
\E \left\|\frac{1}{n}\sum_{i=1}^n \nabla F(\tth_{i-1})\nabla F(\tth_{i-1})^\top \right\| &\leq C\frac{1}{n}\sum_{i=1}^n \E\|\tth_{i-1}\|^2 \leq Cn^{-\alpha}, \\
\E \left\|\frac{1}{n}\sum_{i=1}^n \bxi_i \bxi_i^\top \right\| &\leq C \frac{1}{n}\sum_{i=1}^n h_n^2 \leq Cn^{-2\gamma}, \\
\E \left\|\frac{1}{n}\sum_{i=1}^n \bgamma_i \bgamma_i^\top \right\| &\leq C \frac{1}{n}\sum_{i=1}^n (\E\|\tth_{i-1}\|^2+h_n^2) \leq Cn^{-\alpha}, \\
\E \left\|\frac{1}{n}\sum_{i=1}^n \bvarepsilon_i \bvarepsilon_i^\top \right\| &\leq C \frac{1}{n}\sum_{i=1}^n (\E\|\tth_{i-1}\|^2+h_n^2 +1) \leq C.
\end{align*}
The crossing terms between them can be bounded by Cauchy-Schwarz inequality. Therefore, we can find that all terms in $\widehat{Q}_n$ except $\sum_{i=1}^n \bvarepsilon_i \bvarepsilon_i^\top / t$ can be bounded by $C n^{-\alpha/2}$. So it suffices to prove,
\begin{align}
\label{eq:Gram-est1}
\E \left\|\frac{1}{n}\sum_{i=1}^n \bvarepsilon_i \bvarepsilon_i^\top-Q \right\| \leq Cn^{-\alpha/2}.
\end{align}
Define a new sequence $z_n :=\bvarepsilon_n \bvarepsilon_n^\top -\E_{n-1}\bvarepsilon_n \bvarepsilon_n^\top$. Then $z_n$ is a martingale difference sequence and we have
\begin{align*}
\left\|\bvarepsilon_n \bvarepsilon_n^\top -Q \right\| &\leq \|z_n\| + \left\|\E_{n-1}\bvarepsilon_n \bvarepsilon_n^\top -Q \right\| \\
&\leq   \|z_n\| + C \left(\|\tth_{n-1}\|+\|\tth_{n-1}\|^3+h_n +h_n^3 \right),
\end{align*}
where the last inequality leverages inequality~\eqref{expectationoveru}. Now we have,
\begin{align*}
\E \left\|\frac{1}{n}\sum_{i=1}^n \bvarepsilon_i \bvarepsilon_i^\top-Q \right\| &\leq \E \left\|\frac{1}{n}\sum_{i=1}^n z_i \right\| + C \E \left(\|\tth_{n-1}\|+\|\tth_{n-1}\|^3+h_n +h_n^3 \right)\\
& \leq \E \left\|\frac{1}{n}\sum_{i=1}^n z_i\right\|  + C n^{-\alpha/2}.
\end{align*}
Thus we turn the proof of \eqref{eq:Gram-est1} into,
\begin{align}
\label{eq:Gram-est2}
\E \left\|\frac{1}{n}\sum_{i=1}^n z_i \right\| \leq Cn^{-1/2}.
\end{align}
By H\"older's inequality, it can be derived that,
\begin{align*}
\E_{n-1} \| z_n \|^{2}  \leq \E_{n-1} \| \bvarepsilon_n \|^{4} \leq C(\|\tth_{n-1}\|^{4}+h_n^{4} +1).
\end{align*}
Combine Lemma~\ref{lemmamoments} with Lemma~\ref{lemma:x}, we have
\begin{align*}
\E \left\| \frac{1}{n} \sum_{i=1}^n z_i \right\|^{2}  \leq \frac{1}{n^2} \sum_{i=1}^n C\E \left(\|\tth_{i-1}\|^{4}+h_i^{4} +1 \right)\leq Cn^{-1}.
\end{align*}
Therefore, condition~\eqref{eq:Gram-est2} is satisfied through Jensen's inequality.

If Assumption \ref{assumption1p}, \ref{assumption2}, \ref{assumption3}, \ref{assumption4} hold, then we have the estimate
\begin{align*}
\frac{1}{n}\sum_{i=1}^n \left\|\nabla F(\tth_{n})\nabla F(\tth_{n})^\top \right\| &\leq C \frac{1}{n}\sum_{i=1}^n\|\tth_{n}\|^2 \to 0, \\
\frac{1}{n}\sum_{i=1}^n \E \left\|\bxi_i \bxi_i^\top \right\| &\leq C \frac{1}{n}\sum_{i=1}^n h_n^2 \to 0, \\
\E_{n-1} \|\bgamma_n \bgamma_n^\top \| &\leq C\|\tth_{n-1}\|^2 + Ch_n^2, \\
\E_{n-1} \left\| \bvarepsilon_i \bvarepsilon_i^\top \right\| &\leq C(\|\tth_{n-1}\|^2+h_n^2 +1).
\end{align*}

We further claim that $\frac{1}{n}\sum_{i=1}^n \|\bgamma_i \bgamma_i^\top\|$ converges to 0 in probability. Because $ C\|\tth_{n-1}\|^2 + Ch_n^2$ converges almost surely to 0, for any $\varepsilon, \delta$, there exists $N$, such that
\[
\Pr\{C\|\tth_{n-1}\|^2 + Ch_n^2 < \delta, \forall n \geq N\} > 1 - \varepsilon.
\]
There exists $N'>0$, such that
\[
\Pr\{\frac{1}{N}\sum_{i=1}^N \|\bgamma_i \bgamma_i^\top\| > N' \} < \varepsilon.
\]
For any $m > N$,
\begin{align*}
&\quad \E[\|\bgamma_m \bgamma_m^\top\| | C\|\tth_{n-1}\|^2 + Ch_n^2 < \delta, \forall n \geq N] \\
& \leq \E[\|\bgamma_m \bgamma_m^\top\| I\{C\|\tth_{n-1}\|^2 + Ch_n^2 < \delta, \forall n \geq N\}]/(1 - \varepsilon)\\
& \leq \E[\|\bgamma_m \bgamma_m^\top\| I\{C\|\tth_{m-1}\|^2 + Ch_m^2 < \delta\}]/(1 - \varepsilon)\\
& \leq \E[\|\bgamma_m \bgamma_m^\top\| | C\|\tth_{m-1}\|^2 + Ch_m^2 < \delta]/(1 - \varepsilon) \leq \frac{\delta}{1 - \varepsilon}.
\end{align*}
So for any integer $K_1$, 
\[
\frac{1}{K_1N}\E[\sum_{m = N+1}^{K_1N}\|\bgamma_m \bgamma_m^\top\| | C\|\tth_{n-1}\|^2 + Ch_n^2 < \delta, \forall n \geq N] \leq \frac{(K_1-1)\delta}{K_1(1 - \varepsilon)}.
\]
For any $K_2 > 1$,
\[
\Pr\{\frac{1}{K_1N}\sum_{m = N+1}^{K_1N}\|\bgamma_m \bgamma_m^\top\|> \frac{K_2(K_1-1)\delta}{K_1(1 - \varepsilon)} |C\|\tth_{n-1}\|^2 + Ch_n^2 < \delta\} \leq \frac{1}{K_2},
\]
which implies 
\[
\Pr\{\frac{1}{K_1N}\sum_{m = N+1}^{K_1N}\|\bgamma_m \bgamma_m^\top\|> \frac{K_2(K_1-1)\delta}{K_1(1 - \varepsilon)}\} \leq \frac{1}{K_2}(1 - \varepsilon) + \varepsilon.
\]

So
\[
\Pr\{\frac{1}{K_1N}\sum_{m = 1}^{K_1N}\|\bgamma_m \bgamma_m^\top\|> \frac{K_2(K_1-1)\delta}{K_1(1 - \varepsilon)} + \frac{N'}{K_1}\} \leq \frac{1}{K_2}(1 - \varepsilon) + 2\varepsilon.
\]

We can choose approriate $K_2, \varepsilon, \delta$ to conclude that $\frac{1}{n}\sum_{i = 1}^{n}\|\bgamma_i \bgamma_i^\top\|$ converges to 0 in probability.

By similar arguments, the cross terms between $\nabla F(\tth_{i-1}), \bxi, \bgamma_n$ and $\bvarepsilon_i$ converge to 0 in probability. Finally,
\begin{align*}
\left\|\frac{1}{n}\sum_{i = 1}^{n} \bvarepsilon_i \bvarepsilon_i^\top -Q \right\| &\leq \|\frac{1}{n}\sum_{i = 1}^{n} z_i\| + \frac{1}{n}\sum_{i = 1}^{n} \left\|\E_{i-1}\bvarepsilon_i \bvarepsilon_i^\top -Q \right\| \\
&\leq \|\frac{1}{n}\sum_{i = 1}^{n}z_i\| + C \frac{1}{n}\sum_{i = 1}^{n} \left(\|\tth_{i-1}\|+\|\tth_{i-1}\|^3+h_i +h_i^3 \right),
\end{align*}
The term $\frac{1}{n}\sum_{i = 1}^{n} \left(\|\tth_{i-1}\|+\|\tth_{i-1}\|^3+h_i +h_i^3 \right)$ converge almost surely to 0. 

From the estimate
\[
\E_{n-1} \| z_n \|^{2}  \leq \E_{n-1} \| \bvarepsilon_n \|^{4} \leq C(\|\tth_{n-1}\|^{4}+h_n^{4} +1),
\]
we claim that $\|\frac{1}{n}\sum_{i = 1}^{n} z_i\|$ converges to 0. Recall the inequality~\ref{eq:asconv}
\[
\sum_{i=1}^\infty \frac{\|\tth_{i-1}\|^2}{i^{1/2}}< \infty.
\]
It imples that $\sum_{i=1}^\infty \frac{\|\tth_{i-1}\|^4}{i}< \infty$. So
\[
\sum_{i=1}^\infty \frac{\E_{i-1} \| z_i \|^{2}}{i^2} < \infty.
\]
By Theorem 2.18 in \cite{hall2014martingale}, we have $\frac{1}{n}\sum_{i = 1}^{n} z_i \to 0$ almost surely.

If Assumptions \ref{assumption4}, \ref{assumption6} hold, then the above proof holds without change.
\end{proof}

We now come back to the main proof of Theorem~\ref{thm:plugin}.
\begin{customthm}{\ref{thm:plugin}}
If Assumption \ref{assumption1p}, \ref{assumption2}, \ref{assumption3}, \ref{assumption4} hold, or Assumption \ref{assumption4}, \ref{assumption6} holds, then $\widehat{H}_n^{-1} \widehat{Q}_n \widehat{H}_n^{-1}$ converges in probability to $H^{-1} Q H^{-1}$.

Assume Assumptions \ref{assumption1p} to \ref{assumption4} hold for $\delta_1=+\infty$ and $\delta=2$. Set the step size as $\eta_n = \eta_0 n^{-\alpha}$ for some constant $\eta_0>0$ and $\alpha\in \left(\frac{1}{2}, 1\right)$, and the spacing parameter as $h_n = h_0 n^{-\gamma}$ for some constant $h_0 > 0$, and $\gamma \in \left(\frac{1}{2}, 1\right)$. We have
$\E \left\|  \widehat{H}_n^{-1} \widehat{Q}_n \widehat{H}_n^{-1} - H^{-1} Q H^{-1} \right \| \leq C n^{-\alpha/2}.
$\end{customthm}

\begin{proof}
If Assumption \ref{assumption1p}, \ref{assumption2}, \ref{assumption3}, \ref{assumption4} hold, or Assumption  \ref{assumption4}, \ref{assumption6} holds, then the only thing we need to check is that $\widehat{H}_n^{-1}$ converges to $H^{-1}$. For the thresholding estimator $\widehat{H}_n$, since $\| \widehat{H}_n - \widetilde{H}_n \| \leq \| \widetilde{H}_n - H \|$ by construction, we have the bound
\[
\| \widehat{H}_n - H \| \leq  \| \widetilde{H}_n - H \| +  \| \widehat{H}_n - \widetilde{H}_n \| \leq 2 \| \widetilde{H}_n - H \|.
\]
So $\widehat{H}_n^{-1}$ converges to $H^{-1}$.

Now assume Assumptions \ref{assumption1p} to \ref{assumption4} hold for $\delta_1=+\infty$ and $\delta=2$. The thresholding estimator $\widehat{H}_n$ has the rate below,
\begin{align}
\label{eq:res1}
\E \| \widehat{H}_n - H \|^2
&\leq  4 \E \| \widetilde{H}_n - H \|^2 \leq C n^{-\alpha},
\end{align}
where the last inequality from Lemma~\ref{lemma:hessian}.

By Lemma~\ref{lemma:inverse-mat}, the inverse matrix error satisfies,
\begin{align}
\label{eq:res2}
&\E \| \widehat{H}^{-1}_n - H^{-1}\|^2 \notag \\
\leq \;\; &\E \left[\1_{\|H^{-1}(\widehat{H}_n - H)\| \leq 1/2} 2 \| \widehat{H}_n - H\| \|H^{-1}\|^2 + \1_{\|H^{-1}(\widehat{H}_n - H)\| \geq 1/2} \| \widehat{H}_n^{-1} - H^{-1}\|\right]^2 \notag \\
\leq \;\; &8 \|H^{-1}\|^4 \E \| \widehat{H}_n - H\|^2 + 2 (\kappa_1^{-1} + \lambda_{\min}^{-1}(H))^2 \P \left(\|H^{-1}(\widehat{H}_n - H)\| \geq \frac{1}{2}\right) \notag \\
\leq \;\; &8 \|H^{-1}\|^4 \E \| \widehat{H}_n - H\|^2 + \frac{1}{2 \lambda^2} (\kappa_1^{-1} + \lambda_{\min}^{-1}(H))^2 \E \| \widehat{H}_n - H\|^2 \notag \\
\leq \;\; &C\, n^{-\alpha},
\end{align}
where the third inequality follows from Markov's inequality and the last one from \eqref{eq:res1}.

We now consider our target term, with our previous results \eqref{eq:res1}, \eqref{eq:res2}, and Lemma~\ref{lemma:Gram}, we can obtain that,
\begin{align*}
&\E \left \|\widehat{H}_n^{-1} \widehat{Q}_n \widehat{H}_n^{-1} - H^{-1} Q H^{-1} \right\| \\
=\;\; &\E \left \|\widehat{H}_n^{-1} (\widehat{Q}_n - Q) \widehat{H}_n^{-1} + (H^{-1} +\widehat{H}_n^{-1} - H^{-1}) Q (H^{-1} +\widehat{H}_n^{-1} - H^{-1}) - H^{-1} Q H^{-1}\right\| \\
\leq \; \; &\E \left\|\widehat{H}_n^{-1} (\widehat{Q}_n - Q) \widehat{H}_n^{-1}\right\| + \E \left\|H^{-1} Q (\widehat{H}_n^{-1} - H^{-1})\right\| +\E \left\|(\widehat{H}_n^{-1} - H^{-1}) Q H^{-1}\right\| \\
+ &\E \left\|(\widehat{H}_n^{-1} - H^{-1}) Q (\widehat{H}_n^{-1} - H^{-1})\right\| \\
\leq \;\; &\kappa_1^{-2} \E \left\| \widehat{Q}_n - Q \right\| + 2 \lambda^{-1} \| Q \| \E \left \| \widehat{H}_n^{-1} - H^{-1} \right \| + \| Q \| \E \left \| \widehat{H}_n^{-1} - H^{-1} \right \|^2 \\
\leq \;\;  &Cn^{-\alpha/2},
\end{align*}
which completes the proof.
\end{proof}

\subsubsection*{Proof of Theorem~\ref{thm:fixed-b}}
\begin{proof}
We first show that we can extend Theorem~\ref{thm:clt} and Theorem~\ref{thm:clt-qt} to the following form,
\begin{align*}
\frac{1}{\sqrt{n}} \sum_{i=1}^{\lfloor nr \rfloor} \tth_i \Longrightarrow \Sigma^{1/2} \boldsymbol{W}_r, \; \; r \in [0,1].
\end{align*}
where $\Sigma = H^{-1} Q H^{-1}$ and $\boldsymbol{W}_r$ is a $d$-dimensional vector of independent standard Brownian motions on $[0,1]$. For any $r \in [0,1]$, we consider the following partial summation process,
\begin{align*}
\overline{B}_n(r) = \frac{1}{n} \sum_{i=1}^{\lfloor nr \rfloor} \Delta_i,
\end{align*}
where $\Delta_i = \tth_i - \tth^\star = \tth_i$. Now consider the following alternative partial summation process,
\begin{align*}
\overline{B}^\prime_n(r) = \frac{1}{n} \sum_{i=1}^{\lfloor nr \rfloor} \Delta^\prime_i,
\end{align*}
where
\begin{align*}
\Delta^\prime_i &= \Delta^\prime_{i-1} - \eta_i H \Delta^\prime_{i-1} + \eta_n(\bgamma_n + \bvarepsilon_n), \; \; \Delta_0^\prime = \Delta_0 = \tth_0.
\end{align*}
In the proof of Theorem~\ref{thm:clt}, we follow the proof of Theorem 2 in \cite{polyak1992acceleration} and establish that $\sqrt{n}\|\overline{B}^\prime_n(1) - \overline{B}_n(1)\| = \sqrt{n} \|\bar\Delta^\prime_n - \bar\tth_n\| \to 0$ almost surely. In other words, with probability $1$, for any $\varepsilon$, there exists $N > 0$, such that for any $n > N$, $\sqrt{n}\|\overline{B}^\prime_n(1) - \overline{B}_n(1)\| < \varepsilon$. Let $M = \max_{1\leq i \leq N} \sqrt{n}\|\overline{B}^\prime_n(1) - \overline{B}_n(1)\|$. Then for any $n > \max\{N, \frac{M^2}{\varepsilon^2}N\}$,
\begin{align*}
\sqrt{n}\|\overline{B}^\prime_n(r) - \overline{B}_n(r)\| &= \frac{\sqrt{\lfloor nr \rfloor}}{\sqrt{n}} \sqrt{\lfloor nr \rfloor}\|\overline{B}^\prime_{\lfloor nr \rfloor}(1) - \overline{B}_{\lfloor nr \rfloor}(1)\|.
\end{align*}
Now when $\lfloor nr \rfloor > N$, we can establish the following,
\begin{align*}
\sqrt{n}\|\overline{B}^\prime_n(r) - \overline{B}_n(r)\| \leq \varepsilon \frac{\sqrt{\lfloor nr \rfloor}}{\sqrt{n}} \leq \varepsilon.
\end{align*}
Moreover, when $\lfloor nr \rfloor \leq N$, it can be derived that,
\begin{align*}
\sqrt{n}\|\overline{B}^\prime_n(r) - \overline{B}_n(r)\| &= \frac{\sqrt{\lfloor nr \rfloor}}{\sqrt{n}}\sqrt{\lfloor nr \rfloor} \|\overline{B}^\prime_{\lfloor nr \rfloor}(1) - \overline{B}_{\lfloor nr \rfloor}(1)\| \leq \sqrt{r}M \leq \varepsilon.
\end{align*}
Therefore, we have $\sqrt{n}\|\overline{B}^\prime_n(r) - \overline{B}_n(r)\| \leq \varepsilon$ for sufficiently large $n$. We can then further derive that $\sqrt{n} \sup_r \|\overline{B}^\prime_n(r) - \overline{B}_n(r)\| \to 0$ almost surely. Now we may consider the convergence of $\overline{B}^\prime_n(r)$ instead. The $\overline{B}^\prime_n(r)$ has the following decomposition,
\begin{align}
\label{eq:fixed-b-proof-decomp}
\sqrt{n} \overline{B}^\prime_n(r) = \frac{1}{\sqrt{n} \lfloor nr \rfloor \eta_{\lfloor nr \rfloor}} \tth_0 + \frac{1}{\sqrt{n}} \sum_{i=1}^{\lfloor nr \rfloor} H^{-1} (\bgamma_i + \bvarepsilon_i) + \frac{1}{\sqrt{n}} \sum_{i=1}^{\lfloor nr \rfloor} w_i^{\lfloor nr \rfloor} (\bgamma_i + \bvarepsilon_i),
\end{align}
where $\frac{1}{\sqrt{n}} \sum_{i=1}^n \| w_i^n\| \rightarrow 0$. The first term on the RHS converges to $0$ from above. For the third term on the RHS, because $\{\bgamma_i + \bvarepsilon_i\}$ is a martingale difference sequence, by Doob's martingale inequality,
\begin{align*}
\Pr\{\sup_{0\leq r \leq 1} \|\frac{1}{\sqrt{n}} \sum_{i=1}^{\lfloor nr \rfloor} w_i^{\lfloor nr \rfloor} (\bgamma_i + \bvarepsilon_i)\| \geq K \} &\leq \frac{1}{K}\E\{\|\frac{1}{\sqrt{n}} \sum_{i=1}^{n} w_i^{n} (\bgamma_i + \bvarepsilon_i)\|\}.
\end{align*}
In the event $\{C\|\tth_{n-1}\|^2 + C < N, \forall n > 0 \}$, $\E \|\bgamma_i + \bvarepsilon_i\|^2$ is bounded, so $\E \frac{1}{n}\sum_{i=1}^n \|w_i^{n} (\bgamma_i + \bvarepsilon_i)\|^2 \to 0$ conditioned on this event. Thus, in this event, we have
\begin{align*}
\Pr\left\{\sup_{0\leq r \leq 1} \|\frac{1}{\sqrt{n}} \sum_{i=1}^{\lfloor nr \rfloor} w_i^{\lfloor nr \rfloor} (\bgamma_i + \bvarepsilon_i)\| \geq K \right\} \to 0.
\end{align*}
Since the probability of this event can be arbitrarily close to $1$, and $K$ can be arbitrarily small, we can further have the convergence in probability below,
\begin{align*}
\sup_{0\leq r \leq 1} \|\frac{1}{\sqrt{n}} \sum_{i=1}^{\lfloor nr \rfloor} w_i^{\lfloor nr \rfloor} (\bgamma_i + \bvarepsilon_i)\| \to 0.
\end{align*}
Finally, for the second term on the RHS of Equation~\eqref{eq:fixed-b-proof-decomp}, we could use a direct application of functional martingale central limit theorem. Combining Theorem 4.1 from \cite{hall2014martingale} and Equation~\eqref{polyakcondition3}, we can establish that
\begin{align*}
\frac{1}{\sqrt{n}} \sum_{i=1}^{\lfloor nr \rfloor} H^{-1} (\bgamma_n + \bvarepsilon_n) \Longrightarrow \Sigma^{1/2} \boldsymbol{W}_r.
\end{align*}
In conclusion, we have proved that
\begin{align*}
\frac{1}{\sqrt{n}} \sum_{i=1}^{\lfloor nr \rfloor} \tth_i \Longrightarrow \Sigma^{1/2} \boldsymbol{W}_r, \; \; r \in [0,1].
\end{align*}
Therefore, for any $\w \in \R^d$,  we have
\begin{align*}
C_n(r) = \frac{1}{\sqrt{n}} \sum_{i=1}^{\lfloor nr \rfloor} \w^\top \tth_i \Rightarrow \w^\top (\w^\top \Sigma \w)^{1/2} W_r, \; \; r \in [0,1].
\end{align*}
Here $W_r$ is the standard one dimensional Brownian motion. In addition,
\begin{align*}
\w^\top V_n \w = \frac{1}{n} \sum_{i=1}^n \left[C_n \left(\frac{i}{n}\right) - \frac{i}{n}C_n(1)\right] \left[C_n \left(\frac{i}{n}\right) - \frac{i}{n}C_n(1)\right]^\top.
\end{align*}
Notice that $\w^\top (\overline{\tth}_n) = \frac{1}{\sqrt{n}} C_n(1)$, and
\begin{align*}
n \frac{(\w^\top \overline{\tth}_n)^2}{\w^\top V_n \w} \Rightarrow \frac{W_1^2}{\int_0^1 (W_r - r W_1)^2 \intd r},
\end{align*}
using the continuous mapping theorem.

\end{proof}

\subsubsection{Technical Lemmas}
\label{sec:app-d}

The following lemma is from \cite{assouad1975espaces}. We include the proof here.
\begin{lemma}[\citet{assouad1975espaces}]
\label{lemmamoments}
Let $\{\X_n\}$ be a martingale difference sequence, i.e. $\E[\X_n | \X_{n-1}] = 0$. For any $1 \leq p \leq 2$ and any norm $\|\cdot \|$ on $\mathbb{R}^d$, there exists a constant $C$ such that
\begin{align*}
\E \left\|\sum_{i=1}^n \X_i \right\|^p \leq C \sum_{i=1}^n\E\left[\| \X_i\|^p\right].
\end{align*}
\end{lemma}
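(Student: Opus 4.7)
The plan is to reduce the statement to the Euclidean norm and then run a straightforward induction on $n$ driven by a single pointwise smoothness inequality for the $p$-th power of the norm. Since all norms on $\R^d$ are equivalent, there exist constants $k_d, K_d > 0$ with $k_d \|x\|_2 \leq \|x\| \leq K_d \|x\|_2$, so proving the bound for the Euclidean norm $\|\cdot\|_2$ yields the general statement up to a factor $(K_d / k_d)^p$ absorbed into $C$. In what follows I write $\|\cdot\|$ for the Euclidean norm, and $S_n = \sum_{i=1}^n \X_i$ with $S_0 = 0$.

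The analytic core is the following pointwise bound: for $1 \leq p \leq 2$ and all $a, b \in \R^d$,
\[
\|a+b\|^p \leq \|a\|^p + p\,\|a\|^{p-2}\langle a, b\rangle + C_p \|b\|^p,
\]
with the middle term interpreted as $0$ when $a = 0$. For $p = 2$ this is the Hilbert-space identity with $C_2 = 1$. For $1 \leq p < 2$ I would prove it by splitting into two cases. When $\|b\| \leq \|a\|$, a second-order Taylor expansion of the smooth function $t \mapsto \|a+tb\|^p$ around $t=0$ produces a remainder of order $\|a\|^{p-2}\|b\|^2$, which is at most $\|b\|^p$ since $\|b\|^2 / \|a\|^{2-p} = \|b\|^p (\|b\|/\|a\|)^{2-p} \leq \|b\|^p$. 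When $\|b\| > \|a\|$, the triangle inequality gives $\|a+b\|^p \leq (2\|b\|)^p = 2^p \|b\|^p$, while $|p\,\|a\|^{p-2}\langle a, b\rangle| \leq p\,\|a\|^{p-1}\|b\| \leq p\,\|b\|^p$ absorbs the linear term. Combining the two regimes yields the inequality with a constant $C_p$ depending only on $p$.

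With the smoothness inequality in hand, apply it pointwise with $a = S_{n-1}$ and $b = \X_n$, and condition on the natural filtration $\mathcal{F}_{n-1}$. The martingale difference property $\E[\X_n \mid \mathcal{F}_{n-1}] = 0$ kills the cross term $p\,\|S_{n-1}\|^{p-2}\langle S_{n-1}, \E[\X_n \mid \mathcal{F}_{n-1}]\rangle$ because $S_{n-1}$ is $\mathcal{F}_{n-1}$-measurable, leaving
\[
\E\bigl[\|S_n\|^p \,\big|\, \mathcal{F}_{n-1}\bigr] \leq \|S_{n-1}\|^p + C_p\, \E\bigl[\|\X_n\|^p \,\big|\, \mathcal{F}_{n-1}\bigr].
\]
Taking total expectations and iterating from $n$ down to $0$, together with the tower property, gives $\E \|S_n\|^p \leq C_p \sum_{i=1}^n \E \|\X_i\|^p$, which is the desired conclusion (the conditional form claimed in the statement is obtained by the same induction without taking the outer expectation).

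The main obstacle is exclusively the sub-quadratic regime $1 \leq p < 2$ of the pointwise smoothness inequality: at $p = 2$ the result is the classical orthogonality identity, but producing a constant $C_p$ that is uniform in $a$ and $b$ when $p < 2$ requires the case split above, since the Hessian of $x \mapsto \|x\|^p$ blows up as $\|x\| \to 0$ and a naive Taylor bound gives a remainder that degenerates. Once this uniform bound is secured, the martingale induction is entirely mechanical.
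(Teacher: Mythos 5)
Your proof is correct, but it takes a genuinely different route through the key pointwise inequality than the paper does. The paper's proof establishes the \emph{symmetrized} two-point bound $\tfrac12\bigl(\|\a+\b\|_2^p + \|\a-\b\|_2^p\bigr) \leq \|\a\|_2^p + C\|\b\|_2^p$, applies it with $\a = \sum_{i<n}\X_i$ and $\b = \X_n$, and then disposes of the extra term $\E_{n-1}\|\sum_{i<n}\X_i - \X_n\|^p$ via Jensen's inequality and the martingale-difference property ($\E_{n-1}\|\cdot\|^p \geq \|\E_{n-1}(\cdot)\|^p$); the martingale structure enters only through this convexity step, and the norm is never differentiated. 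You instead use the linearized $p$-smoothness inequality $\|\a+\b\|^p \leq \|\a\|^p + p\|\a\|^{p-2}\langle \a,\b\rangle + C_p\|\b\|^p$ and kill the first-order term directly by conditioning, which makes it more transparent exactly where $\E[\X_n\mid\mathcal{F}_{n-1}]=0$ is used and yields a constant depending only on $p$ before the norm-equivalence step. Both inductions are then identical. One small caution in your argument: in the regime $\|\b\|\leq\|\a\|$ the segment from $\a$ to $\a+\b$ can pass arbitrarily close to the origin (e.g.\ $\b\approx-\a$), where the Hessian of $\x\mapsto\|\x\|^p$ blows up, so the Taylor remainder is not pointwise of order $\|\a\|^{p-2}\|\b\|^2$ along the whole segment; the stated bound is nonetheless correct and is recovered either by splitting at $\|\b\|\leq\|\a\|/2$ (so the segment stays in $\{\|\x\|\geq\|\a\|/2\}$) and handling $\|\a\|/2<\|\b\|\leq\|\a\|$ by the same crude triangle-inequality bound as your second case. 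With that one-line repair the argument is complete.
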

\begin{proof}
We would like to show that there exists a constant $C$ (which depends on $d$ and $p$) such that for any $\a, \b \in \R^d$,
\begin{align*}
\frac{1}{2} \left(\|\a+\b\|_2^p + \|\a-\b\|_2^p\right) \leq \|\a\|_2^p + C\|\b\|_2^p,
\end{align*}
where $\|\cdot\|_2$ is the 2-norm. To see this, in the one dimensional case, this is equivalent to
\begin{align*}
\frac{1}{2} \left(|1+x|^p + |1-x|^p\right) \leq 1 + C|x|^p.
\end{align*}
At $x = 1$, the left hand side is differentiable and its first derivative is 0, so there exists a constant $C$ such that the inequality holds in a neighborhood of $x = 1$. At $x \to \pm \infty$, the inequality also holds with some constant $C$. So it is easy to find a constant $C$ such that the inequality holds for all $x$. The proof for the $d$-dimensional case is the same.

Using the above inequality, we have
\begin{align*}
\E_{n-1} \left\|\sum_{i=1}^n \X_i \right\|_2^p &= \E_{n-1} \left\|\sum_{i=1}^{n-1} \X_i + \X_n \right\|_2^p\\
&\leq 2 \left\|\sum_{i=1}^{n-1} \X_i\right\|_2^p + 2C \E_{n-1} \|\X_n\|_2^p- \E_{n-1} \left\|\sum_{i=1}^{n-1} \X_i - \X_n \right\|_2^p.
\end{align*}
On the other hand,
\begin{align*}
\E_{n-1} \left\|\sum_{i=1}^{n-1} \X_i - \X_n \right\|_2^p \geq \left\|\sum_{i=1}^{n-1} \X_i - \E_{n-1}\X_n \right\|_2^p =  \left\|\sum_{i=1}^{n-1} \X_i\right\|_2^p.
\end{align*}
So
\[
\E_{n-1} \left\|\sum_{i=1}^n \X_i \right\|_2^p \leq \left\|\sum_{i=1}^{n-1} \X_i\right\|_2^p + 2C \E_{n-1} \|\X_n\|_2^p.
\]
By induction, we then have
\begin{align*}
\E\left\|\sum_{i=1}^n \X_i\right\|_2^p \leq 2C \sum_{i=1}^n\E \left[\| \X_i\|_2^p\right].
\end{align*}
For any general norm, there exists a constant $C$ such that
\begin{align*}
\frac{1}{C}\|X\| \leq \|X\|_2 \leq C\|X\|.
\end{align*}
So the same result holds for any norm.
\end{proof}

We now provide a matrix perturbation inequality from \cite{chen2020statistical}.
\begin{lemma}
\label{lemma:inverse-mat}
If a matrix $B=A+E$ where $A$ and $B$ are invertible, we have,
\begin{align*}
\left\|B^{-1}-A^{-1}\right\| \leq \| A^{-1} \|^2 \|E \| \frac{1}{1 - \| A^{-1} E \|}.
\end{align*}
\end{lemma}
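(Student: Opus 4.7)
The plan is to use the standard resolvent identity
\begin{align*}
B^{-1} - A^{-1} = A^{-1}(A - B)B^{-1} = -A^{-1} E B^{-1},
\end{align*}
which follows from left-multiplying $A^{-1}$ and right-multiplying $B^{-1}$ onto the trivial identity $A - B = -E$. Taking operator norms and using submultiplicativity reduces the problem to bounding $\|B^{-1}\|$ in terms of $\|A^{-1}\|$ and $\|E\|$ alone, namely
\begin{align*}
\|B^{-1} - A^{-1}\| \leq \|A^{-1}\| \cdot \|E\| \cdot \|B^{-1}\|.
\end{align*}

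To control $\|B^{-1}\|$, I would factor $B = A(I + A^{-1}E)$ so that $B^{-1} = (I + A^{-1}E)^{-1} A^{-1}$. The inequality as stated is informative precisely when $\|A^{-1}E\| < 1$ (otherwise the right-hand side is nonpositive or undefined), and in that regime the Neumann series $\sum_{k \geq 0}(-A^{-1}E)^k$ converges absolutely to $(I+A^{-1}E)^{-1}$. Submultiplicativity then yields
\begin{align*}
\|(I + A^{-1}E)^{-1}\| \leq \sum_{k \geq 0}\|A^{-1}E\|^k = \frac{1}{1 - \|A^{-1}E\|},
\end{align*}
and hence $\|B^{-1}\| \leq \|A^{-1}\|/(1 - \|A^{-1}E\|)$. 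Plugging this into the resolvent bound gives the claimed inequality.

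This statement is a classical perturbation fact, so there is no real technical obstacle. The only point worth verifying is the implicit hypothesis $\|A^{-1}E\| < 1$: in the application to the Hessian estimator (see the derivation of \eqref{eq:res2}), this event is enforced with high probability via a Markov-type argument, and the complementary event is absorbed into a crude deterministic bound on $\|\widehat{H}_n^{-1} - H^{-1}\|$. So the lemma is invoked exactly where its hypothesis holds, and the two-line proof above suffices.
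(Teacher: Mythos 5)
Your proof is correct, and it arrives at the same key identity as the paper, namely $B^{-1}-A^{-1} = -A^{-1}E\,(I+A^{-1}E)^{-1}A^{-1}$, so the two arguments are close in spirit; but your route to it and your justification of the final norm bound are genuinely different and, in fact, cleaner. The paper derives the identity from a Woodbury-type formula whose intermediate expression $\left(A^{-1}+E^{-1}\right)^{-1}$ tacitly assumes $E$ is invertible (the final rewritten form does not need this, but the derivation as written does), whereas your resolvent identity $B^{-1}-A^{-1}=-A^{-1}EB^{-1}$ requires only the stated invertibility of $A$ and $B$. More substantively, the paper bounds $\|(I+A^{-1}E)^{-1}\|$ by $1/\lambda_{\min}(I+A^{-1}E)$ and then invokes Weyl's inequality; for a general (non-normal) matrix $M$ one has $\|M^{-1}\|=1/\sigma_{\min}(M)$ rather than $1/\lambda_{\min}(M)$, and Weyl's inequality is an eigenvalue perturbation statement for Hermitian matrices, so that chain is not rigorous as stated since $A^{-1}E$ need not be symmetric. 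Your Neumann-series bound $\|(I+A^{-1}E)^{-1}\|\leq \sum_{k\geq 0}\|A^{-1}E\|^k = 1/(1-\|A^{-1}E\|)$, valid whenever $\|A^{-1}E\|<1$, sidesteps both issues and is the standard correct justification. You are also right that the hypothesis $\|A^{-1}E\|<1$ is implicit in the lemma and is exactly what the Markov-inequality split in the derivation of \eqref{eq:res2} arranges; on the complementary event the paper uses the crude bound $\|\widehat{H}_n^{-1}-H^{-1}\|\leq \kappa_1^{-1}+\lambda_{\min}^{-1}(H)$, just as you describe.
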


\begin{proof}
Notice that
\begin{align*}
B^{-1} = (A + E)^{-1} &= A^{-1} - A^{-1} \left(A^{-1} + E^{-1}\right)^{-1} A^{-1} \\
&= A^{-1} - A^{-1} E \left(A^{-1}E + I\right)^{-1} A^{-1}.
\end{align*}
Therefore, the inversion error is,
\begin{align*}
\| B^{-1} - A^{-1} \|
&= \left \| A^{-1} E \left(A^{-1}E + I\right)^{-1} A^{-1}\right \| \\
&\leq \| A^{-1} \|^2 \|E \| \| (A^{-1}E + I)^{-1} \| \\
&\leq \| A^{-1} \|^2 \|E \| \frac{1}{\lambda_{\min}(A^{-1} E + I)} \\
&\leq \| A^{-1} \|^2 \|E \| \frac{1}{1 - \| A^{-1} E \|},
\end{align*}
where we use Weyl's inequality in the last inequality.
\end{proof}

\subsection{Finite-difference stochastic Newton method}
\label{sec:newton}

As a by-product and an application, the online finite-difference estimator of Hessian in \eqref{eq:hessian-hat} enables us to develop the {\kw} version of the stochastic Newton's method. Existing literature that handles the {\rm} version of the stochastic Newton's method traces back to \cite{ruppert1985newton}.
Given an initial point $\tth_0$, the {\kw} stochastic Newton's method has the following updating rule, 
\begin{align}
\label{eq:newton-update}
\tth_n = \tth_{n-1} - \frac{1}{n} \widehat H_{n-1}^{-1} \widehat{g}_{h_n,\v_n}(\tth_{n-1}; \zet_{n}),
\end{align}
Here $\widehat H_n^{-1}$ a recursive estimator of $H^{-1}$. We modify the thresholding Hessian estimator $\widehat{H}_n $ in \eqref{eq:hessian-hat} as follows. Let $U \widetilde\Lambda_n U^\top$ be the eigenvalue decomposition of $\widetilde{H}_n$ in \eqref{eq:hessian-est2}, and define
\begin{align}\label{eq:c.19}
\widehat{H}_n = U \widehat{\Lambda}_n U^\top, \; \; \widehat{\Lambda}_{n,kk} =  \max \left\{\kappa_1, \min \big\{\kappa_2, \widetilde\Lambda_{n,kk}\big\}\right\}, \; k=1,2,\dots,d,
\end{align}
for some constants $0 < \kappa_1 < \lambda < L_f < \kappa_2$, where $\lambda, L_f$ are defined in Assumption~\ref{assumption1p}. 

\begin{theorem} \label{thm:newton} 
Assume Assumptions \ref{assumption1p} to \ref{assumption4} hold for $\delta_1=+\infty$ and $\delta=2$. The Hessian estimator $\widehat{H}_n$ in \eqref{eq:c.19} converges in probability to the empirical Hessian matrix $H$. The stochastic Newton estimator $\tth_n$ in \eqref{eq:newton-update} converges to $\tth^\star$ almost surely. The stochastic Newton estimator $\tth_n$ has the following limiting distribution,
\begin{align}
\label{eq:newton-clt}
\sqrt{n} \left(\tth_n - \tth^\star\right) \Longrightarrow \N\left(\0, H^{-1} Q H^{-1}\right),
\end{align}
for the same $Q$ as in Theorem~\ref{thm:clt}.
\end{theorem}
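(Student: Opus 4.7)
The plan is to establish the three claims in sequence, leveraging the machinery already developed for Theorems~\ref{thm:clt} and~\ref{thm:plugin}. First I would prove consistency of the preconditioner. Lemma~\ref{lemma:hessian} gives $\E\|\widetilde{H}_n - H\|^2 \le C_1 n^{-\alpha} + C_2 p^{-1} n^{-1}$, and by Assumption~\ref{assumption1} all eigenvalues of $H$ lie in $[\lambda, L_f] \subset (\kappa_1, \kappa_2)$. Since the entrywise projection onto $[\kappa_1,\kappa_2]$ in \eqref{eq:c.19} is $1$-Lipschitz in the eigenvalues and $H$ is already a fixed point of the projection, $\|\widehat{H}_n - H\| \le \|\widetilde{H}_n - H\|$, so $\widehat{H}_n \to H$ in $L^2$. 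Combined with the resolvent identity in Lemma~\ref{lemma:inverse-mat} and the uniform spectral bound $\kappa_1 \le \lambda_{\min}(\widehat{H}_n)$, this yields $\widehat{H}_n^{-1} \to H^{-1}$ in probability.

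Next I would establish almost sure convergence and a second-moment rate for $\tth_n$. Writing $\Delta_n := \tth_n - \tth^\star$, the update becomes
\begin{align*}
\Delta_n = \Delta_{n-1} - \tfrac{1}{n}\widehat{H}_{n-1}^{-1}\nabla F(\tth_{n-1}) + \tfrac{1}{n}\widehat{H}_{n-1}^{-1}(\bxi_n + \bgamma_n + \bvarepsilon_n).
\end{align*}
Since the eigenvalues of $\widehat{H}_{n-1}^{-1}$ lie in $[\kappa_2^{-1}, \kappa_1^{-1}]$ by construction, strong convexity gives $\langle\Delta_{n-1}, \widehat{H}_{n-1}^{-1}\nabla F(\tth_{n-1})\rangle \ge \kappa_2^{-1}\lambda\|\Delta_{n-1}\|^2$. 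Mimicking the Robbins-Siegmund argument used for Proposition~\ref{lemma:x}, together with the bounds $\|\bxi_n\| \lesssim h_n$, $\E\|\bgamma_n\|^2 \lesssim \|\Delta_{n-1}\|^2 + h_n^2$, and $\E\|\bvarepsilon_n\|^2 \lesssim \|\Delta_{n-1}\|^2 + 1$, yields almost sure convergence $\Delta_n \to 0$ and the sharper $L^2$ rate $\E\|\Delta_n\|^2 = \mathcal{O}(n^{-1})$ that is characteristic of stochastic Newton with step size $1/n$.

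For the central limit theorem, I would Taylor-expand $\nabla F(\tth_{n-1}) = H\Delta_{n-1} + R_n$ with $\|R_n\| \le C\|\Delta_{n-1}\|^2$ (Assumption~\ref{assumption3}), and split the preconditioner as $\widehat{H}_{n-1}^{-1} = H^{-1} + (\widehat{H}_{n-1}^{-1} - H^{-1})$. The recursion rearranges to
\begin{align*}
\Delta_n = \left(I - \tfrac{1}{n}I\right)\Delta_{n-1} + \tfrac{1}{n}H^{-1}\bvarepsilon_n + \mathcal{E}_n,
\end{align*}
where $\mathcal{E}_n$ collects the bias $\tfrac{1}{n}H^{-1}\bxi_n$, the Taylor remainder $\tfrac{1}{n}\widehat{H}_{n-1}^{-1}R_n$, the martingale-difference piece involving $\bgamma_n$, and the perturbation $\tfrac{1}{n}(\widehat{H}_{n-1}^{-1} - H^{-1})H\Delta_{n-1}$. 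Solving the linear part explicitly gives $\sqrt{n}\Delta_n = \tfrac{1}{\sqrt{n}}\sum_{i=1}^n H^{-1}\bvarepsilon_i + \sqrt{n}\sum_{i=1}^n \prod_{j=i+1}^n(1-\tfrac{1}{j})\mathcal{E}_i$, and the martingale CLT of \cite{hall2014martingale} applied exactly as in the proof of Theorem~\ref{thm:clt} (with the conditional covariance identity \eqref{expectationoveru} for $\bvarepsilon_n$) yields the Gaussian limit with covariance $H^{-1}QH^{-1}$, provided the residual sums in $\mathcal{E}_i$ are $o_p(1)$.

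The main obstacle will be controlling the perturbation term $\tfrac{1}{n}(\widehat{H}_{n-1}^{-1} - H^{-1})H\Delta_{n-1}$ through the recursion: unlike in Theorem~\ref{thm:plugin} where the plug-in enters only at the end, here the empirical Hessian drives the iterate itself. I would handle this by combining the $L^2$ rate $\E\|\widehat{H}_{n-1}^{-1} - H^{-1}\|^2 = \mathcal{O}(n^{-\alpha})$ from Lemma~\ref{lemma:hessian} with the iterate rate $\E\|\Delta_{n-1}\|^2 = \mathcal{O}(n^{-1})$ via Cauchy-Schwarz, showing the cumulative contribution to $\sqrt{n}\Delta_n$ is $o_p(1)$. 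A Grönwall-type estimate on the product $\prod_{j=i+1}^n(I - \tfrac{1}{j}\widehat{H}_{j-1}^{-1}H)$ keeps the accumulated perturbation under control uniformly in $n$.
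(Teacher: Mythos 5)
Your proposal follows essentially the same route as the paper's proof: consistency of $\widehat{H}_n$ from Lemma~\ref{lemma:hessian} plus the contractive thresholding, almost sure convergence and moment rates by rerunning the Proposition~\ref{lemma:x} recursion with step size $1/n$, and then linearizing the drift so that the deterministic part telescopes exactly via $\prod_{j=i+1}^{n}(1-1/j)=i/n$, leaving a martingale sum for the CLT and residual sums to be shown $o_p(1)$ after multiplying by $\sqrt{n}$. The one substantive difference is in how the martingale term is set up. The paper keeps the random preconditioner inside the sum, i.e.\ works with $\frac{1}{\sqrt{n}}\sum_{k}\widehat{H}_{k}^{-1}\bvarepsilon_{k+1}$, which is still a martingale because $\widehat{H}_{k}^{-1}$ is $\mathcal{F}_k$-measurable, and then only needs the conditional covariance $\widehat{H}_{k}^{-1}\E_{k}[\bvarepsilon_{k+1}\bvarepsilon_{k+1}^\top]\widehat{H}_{k}^{-1}\rightarrow H^{-1}QH^{-1}$ in probability to invoke the martingale CLT. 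You instead factor out $H^{-1}\bvarepsilon_n$ as the leading term, which forces the extra remainder $\frac{1}{\sqrt{n}}\sum_{k}(\widehat{H}_{k}^{-1}-H^{-1})\bvarepsilon_{k+1}$ into $\mathcal{E}_n$ --- a term you do not list in your enumeration of $\mathcal{E}_n$. It is not a fatal gap: this is a martingale difference sum, and Lemma~\ref{lemmamoments} together with the uniform bound $\|\widehat{H}_k^{-1}-H^{-1}\|\leq \kappa_1^{-1}+\lambda^{-1}$ and $\E\|\widehat{H}_k^{-1}-H^{-1}\|^2\leq Ck^{-\alpha}$ shows it is $O_p(n^{-\alpha/4})$, but it must be stated and controlled. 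Also note that your claimed rate $\E\|\Delta_n\|^2=\O(n^{-1})$ is sharper than what the paper actually establishes (the paper retains an extra $n^{-C_1/2}$ term whose exponent depends on the ratio of the strong-convexity constant to $\kappa_2$); the residual bounds should be checked against the rate you can actually prove rather than the nominal $n^{-1}$. The Gr\"onwall step you anticipate for $\prod_j(I-\frac{1}{j}\widehat{H}_{j-1}^{-1}H)$ is unnecessary under your own decomposition, since the linear part is the scalar $1-1/n$ and the deviation $\frac{1}{n}(\widehat{H}_{n-1}^{-1}H-I)\Delta_{n-1}$ already sits in $\mathcal{E}_n$.
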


Theorem \ref{thm:newton} states that the final iterate of the {\kw} stochastic Newton method \eqref{eq:newton-update} entails the same asymptotic distribution as the averaged {\akw} estimator \eqref{eq:akw}. In contrast to {\akw}, \eqref{eq:newton-update} leverages additional Hessian information to achieve the asymptotic normality and efficiency. Nevertheless, the numerical implementation of the {\kw} stochastic Newton's method requires to update a Hessian estimator $\widehat H_{n}$ in all iterations, which demands significant additional computation unless such an estimator is yet computed and maintained along the procedure for other purposes.

\subsubsection*{Proof of Theorem~\ref{thm:newton}}
\begin{proof}
Notice that
\begin{align*}
\tth_n = \tth_{n-1} - \frac{1}{n} \widehat H_{n-1}^{-1} \nabla F(\tth_{n-1}) + \frac{1}{n} \widehat H_{n-1}^{-1} \left(\bxi_{n} + \bgamma_n + \bvarepsilon_n\right),
\end{align*}
where $\bxi_n, \bgamma_n, \bvarepsilon_n$ are defined at the beginning of the supplement. We can deduce
\begin{align*}
\E_{n-1} F(\tth_n) &\leq  F(\tth_{n-1}) + \E_{n-1}\langle \nabla F(\tth_{n-1}), - \frac{1}{n} \widehat H_{n-1}^{-1} \nabla F(\tth_{n-1}) + \frac{1}{n}\widehat H_{n-1}^{-1} \left(\bxi_{n} + \bgamma_n + \bvarepsilon_n\right) \rangle,\\
&\quad + C\E_{n-1}\|- \frac{1}{n} \widehat H_{n-1}^{-1} \nabla F(\tth_{n-1}) + \frac{1}{n} \widehat H_{n-1}^{-1} \left(\bxi_{n} + \bgamma_n + \bvarepsilon_n\right)\|^2\\
&\leq F(\tth_{n-1})-\frac{C}{n}\|\nabla F(\tth_{n-1})\|^2 + \frac{Ch_n}{n}\|\nabla F(\tth_{n-1})\|\\
&\quad + \frac{C}{n^2}(\|\nabla F(\tth_{n-1})\|^2 + \|\tth_{n-1}\|^2 + 1)\\
&\leq (1-\frac{C}{n}+\frac{Ch_n}{n})F(\tth_n) + \frac{C}{n^2}(F(\tth_n) + 1).
\end{align*}
which implies that $F(\tth_n)$ converges almost surely to 0, so $\tth_n$ converges almost surely to 0. Therefore, $\tth_n \rightarrow 0$ almost surely by martingale convergence theorem \citep{robbins1951stochastic}. It also implies the convergence rate
\begin{align} \label{eq:rate-hessian-1}
\E_{n-1} \| \tth_n \|^2 \leq C \left(n^{-C_1/2} + n^{-1}\right).
\end{align}
Similarly,
\begin{align} \label{eq:rate-hessian-2}
\E_{n-1} \| \tth_n \|^{2+\delta} \leq C \left(n^{-C_1/2} + n^{-(1+\delta)}\right).
\end{align}
Those results show that $\widehat H_n^{-1}$ converges to $H^{-1}$ by the arguments in the proof of Theorem~\ref{thm:plugin}. Now we consider the limiting distribution. 
\begin{align*}
\tth_n
&= \tth_{n-1} - \frac{1}{n} H^{-1} \nabla F(\tth_{n-1}) - \frac{1}{n} \left(\widehat H_{n-1}^{-1} - H^{-1}\right) \nabla F(\tth_{n-1}) + \frac{1}{n} \widehat H_{n-1}^{-1} \left(\bxi_{n} + \bgamma_n + \bvarepsilon_n\right) \\
&= \left(1 - \frac{1}{n}\right)\tth_{n-1} - \frac{1}{n} H^{-1} \bdelta_n - \frac{1}{n} \left(\widehat H_{n-1}^{-1} - H^{-1}\right) \nabla F(\tth_{n-1}) + \frac{1}{n} \widehat H_{n-1}^{-1} \left(\bxi_{n} + \bgamma_n + \bvarepsilon_n\right),
\end{align*}
where $\bdelta_n = \nabla F(\tth_{n-1}) - H \tth_{n-1}$. By induction, we can find that
\begin{align*}
\tth_n =
\frac{1}{n} \sum_{k=0}^{n-1} H_{k}^{-1} \bvarepsilon_{k+1} + \frac{1}{n} \sum_{k=0}^{n-1} \widehat H_{k}^{-1} \left(\bxi_{k+1} + \bgamma_{k+1}\right)
- \frac{1}{n} H^{-1} \sum_{k=0}^{n-1} \bdelta_{k+1} - \frac{1}{n} \sum_{k=0}^{n-1} \left(\widehat H_{k}^{-1} - H^{-1}\right) \nabla F(\tth_{k}).
\end{align*}
The last three terms in the RHS above all converge to zero. We only need to show that\\
$\frac{1}{\sqrt{n}} \sum_{k=0}^{n-1} H_{k}^{-1} \bvarepsilon_{k+1}$ converges to a normal distribution. Consider
\begin{align*}
\E_k \left[\widehat H_{k}^{-1} \bvarepsilon_{k+1} \bvarepsilon_{k+1}^\top \widehat H_{k}^{-1}\right] = \widehat H_{k}^{-1} \E_k \left[\bvarepsilon_{k+1} \bvarepsilon_{k+1}^\top\right] \widehat H_{k}^{-1},
\end{align*}
recall that in \eqref{expectationoveru} we have shown that $\E_k \left[\bvarepsilon_{k+1} \bvarepsilon_{k+1}^\top\right]$ converges almost surely to $Q$. Therefore, $\E_k \left[\widehat H_{k}^{-1} \bvarepsilon_{k+1} \bvarepsilon_{k+1}^\top \widehat H_{k}^{-1}\right]$ converges in probability to $H^{-1} Q H^{-1}$.

Finally, by martingale central limit theorem \citep[Theorem 2.1.9]{duflo1997random},
\begin{align*}
\frac{1}{\sqrt{n}} \sum_{k=0}^{n-1} \widehat H_{k-1}^{-1} \bvarepsilon_{k+1} \Longrightarrow \mathcal{N}\left(0, H^{-1} Q H^{-1}\right).
\end{align*}
\end{proof}

\section{Additional Results of Numerical Experiments}\label{sec:simu_supp}
In this section, we present additional simulation results. 

\subsection{Choices of the search direction distribution}

In this subsection, we compare the results for different directions $\mathcal{P}_{\v}$ for linear and logistic regression. We fix $n=10^5$, $d=20$ and report the results for linear regression in Table \ref{table:linearchoices} and those for logistic regression in Table \ref{table:logisticchoices}. The specification of the stepsizes and spacing parameters in this subsection are the same as those in Table \ref{table:inference} of Section \ref{sec:exp}.  Tables \ref{table:linearchoices}--\ref{table:logisticchoices} suggest the {\akw} algorithms with search directions {\ti}, {\ts}, {\tg} achieve similar performance for parameter estimation error and average coverage rates, while the average confidence intervals of {\tg} are generally larger. The observations in the numerical experiments match our Proposition \ref{cor:clt0} in Section \ref{sec:choice_dir}.

\begin{table}[t]
  \centering
  \begin{tabular}{lcc|c|ccc|ccc}
    \toprule
  $d$ &  $\Sigma$ & $\mathcal{P}_{\v}$ & Estimation error & \multicolumn{3}{c|}{Average coverage rate} & \multicolumn{3}{c}{Average length}\\
    &&& (standard error) & Plug-in  & Oracle & Fixed-$b$ & Plug-in  & Oracle & Fixed-$b$  \\
    \hline    &                       &{\ti}  & 0.015	(	0.005	)&	0.944	&	0.938	&	0.940	&	0.028	&	0.028	&	0.036	\\
    &                       &{\ts}& 0.015	(	0.005	)&	0.952	&	0.950	&	0.942	&	0.028	&	0.028	&	0.036	\\
    \multirow{-3}{*}{5}&   \multirow{-3}{*}{Identity} &{\tg}& 0.018	(	0.006	)&	0.964	&	0.962	&	0.942	&	0.033	&	0.033	&	0.042	\\
    \hline
    &                       &{\ti}  & 0.017	(	0.006	)&	0.958	&	0.954	&	0.946	&	0.032	&	0.032	&	0.041	\\
    &                       &{\ts}& 0.017	(	0.005	)&	0.960	&	0.958	&	0.934	&	0.031	&	0.031	&	0.039	\\
    \multirow{-3}{*}{5}& \multirow{-3}{*}{Equicorr}   &{\tg}& 0.020	(	0.006	)&	0.960	&	0.952	&	0.918	&	0.037	&	0.037	&	0.046	\\
   \hline
    &                      &{\ti}  & 0.066	(	0.010	)&	0.943	&	0.938	&	0.928	&	0.058	&	0.056	&	0.074	\\
    &                       &{\ts}& 0.066	(	0.010	)&	0.953	&	0.942	&	0.930	&	0.058	&	0.056	&	0.073	\\
    \multirow{-3}{*}{20}& \multirow{-3}{*}{Identity}   &{\tg} & 0.070	(	0.010	)&	0.948	&	0.936	&	0.929	&	0.061	&	0.058	&	0.076	\\ \hline
    &                      &{\ti}  & 0.082	(	0.014	)&	0.938	&	0.931	&	0.923	&	0.071	&	0.068	&	0.087	\\
    &                       &{\ts}& 0.081	(	0.011	)&	0.947	&	0.937	&	0.919	&	0.070	&	0.067	&	0.085	\\
    \multirow{-3}{*}{20}& \multirow{-3}{*}{Equicorr}   &{\tg} &  0.085	(	0.012	)&	0.947	&	0.936	&	0.923	&	0.074	&	0.071	&	0.089	\\ \hline
    &                      &{\ti}  &0.180	(	0.018	)&	0.947	&	0.917	&	0.881	&	0.097	&	0.089	&	0.108	\\
    &                       &{\ts}& 0.179	(	0.019	)&	0.945	&	0.920	&	0.876	&	0.097	&	0.089	&	0.109	\\
    \multirow{-3}{*}{50}& \multirow{-3}{*}{Identity}   &{\tg} &0.184	(	0.019	)&	0.943	&	0.916	&	0.875	&	0.100	&	0.091	&	0.110	\\ \hline
    &                      &{\ti}  & 0.227	(	0.026	)&	0.937	&	0.912	&	0.860	&	0.121	&	0.110	&	0.126	\\
    &                       &{\ts}& 0.224	(	0.024	)&	0.940	&	0.909	&	0.859	&	0.120	&	0.109	&	0.126	\\
    \multirow{-3}{*}{50}& \multirow{-3}{*}{Equicorr}   &{\tg} & 0.229	(	0.025	)&	0.942	&	0.913	&	0.859	&	0.123	&	0.112	&	0.129	\\   \bottomrule
  \end{tabular}
  \caption{Comparison among different direction distributions $\mathcal{P}_{\v}$ (Detailed specification of {\ti,\ts,\tg} can be referred to Section \ref{sec:choice_dir}). We consider the linear regression model, and the {\akw} estimators are computed based on the case of two function queries ($m=1$). Corresponding standard errors are reported in the brackets.}
  \label{table:linearchoices}
\end{table}

\begin{table}[t]
  \centering
  \begin{tabular}{lcc|c|ccc|ccc}
    \toprule
  $d$ &  $\Sigma$ & $\mathcal{P}_{\v}$ & Estimation error & \multicolumn{3}{c|}{Average coverage rate} & \multicolumn{3}{c}{Average length}\\
    &&& (standard error) & Plug-in  & Oracle & Fixed-$b$ & Plug-in  & Oracle & Fixed-$b$  \\
    \hline    &                       &{\ti}  & 0.037	(	0.011	)&	0.946	&	0.938	&	0.916	&	0.065	&	0.065	&	0.075	\\
    &                       &{\ts}&0.034	(	0.012	)&	0.958	&	0.962	&	0.916	&	0.065	&	0.065	&	0.076	\\
        \multirow{-3}{*}{5}& \multirow{-3}{*}{Identity}   &{\tg}&0.041	(	0.014	)&	0.962	&	0.960	&	0.932	&	0.077	&	0.076	&	0.089	\\ \hline
    &                      &{\ti}  &0.042	(	0.015	)&	0.934	&	0.932	&	0.908	&	0.073	&	0.073	&	0.085	\\
    &                       &{\ts}&0.039	(	0.013	)&	0.944	&	0.942	&	0.944	&	0.072	&	0.071	&	0.087	\\
    \multirow{-3}{*}{5}& \multirow{-3}{*}{Equicorr}   &{\tg} &0.047	(	0.016	)&	0.950	&	0.948	&	0.924	&	0.085	&	0.084	&	0.103	\\ \hline

    &                      &{\ti}  &0.152	(	0.025	)&	0.943	&	0.937	&	0.862	&	0.128	&	0.125	&	0.136	\\
    &                       &{\ts}&0.152	(	0.024	)&	0.932	&	0.928	&	0.874	&	0.127	&	0.125	&	0.136	\\
    \multirow{-3}{*}{20}& \multirow{-3}{*}{Identity}   &{\tg} &0.158	(	0.024	)&	0.940	&	0.933	&	0.873	&	0.134	&	0.131	&	0.139	\\ \hline
    &                      &{\ti}  &0.177	(	0.030	)&	0.939	&	0.935	&	0.848	&	0.154	&	0.150	&	0.158	\\
    &                       &{\ts}& 0.179	(	0.030	)&	0.942	&	0.937	&	0.835	&	0.152	&	0.149	&	0.155	\\
    \multirow{-3}{*}{20}& \multirow{-3}{*}{Equicorr}   &{\tg} &  0.187	(	0.030	)&	0.942	&	0.936	&	0.844	&	0.160	&	0.156	&	0.160	\\ \hline
    &                      &{\ti}  & 0.404	(	0.040	)&	0.914	&	0.912	&	0.688	&	0.199	&	0.197	&	0.140	\\
    &                       &{\ts}& 0.409	(	0.041	)&	0.912	&	0.910	&	0.677	&	0.199	&	0.197	&	0.136	\\
    \multirow{-3}{*}{50}& \multirow{-3}{*}{Identity}   &{\tg} &0.415	(	0.042	)&	0.914	&	0.912	&	0.677	&	0.203	&	0.201	&	0.140	\\ \hline
    &                      &{\ti}  & 0.495	(	0.051	)&	0.920	&	0.917	&	0.620	&	0.245	&	0.241	&	0.142	\\
    &                       &{\ts}& 0.490	(	0.048	)&	0.918	&	0.916	&	0.623	&	0.243	&	0.240	&	0.140	\\
    \multirow{-3}{*}{50}& \multirow{-3}{*}{Equicorr}   &{\tg} & 0.498	(	0.049	)&	0.920	&	0.918	&	0.621	&	0.249	&	0.245	&	0.141	\\  \bottomrule
  \end{tabular}
  \caption{Comparison among different direction distributions $\mathcal{P}_{\v}$ (Detailed specification of {\ti,\ts,\tg} can be referred to Section \ref{sec:choice_dir}). We consider the logistic regression model, and the {\akw} estimators are computed based on the case of two function queries ($m=1$). Corresponding standard errors are reported in the brackets.}
  \label{table:logisticchoices}
\end{table}

\subsection{Multi-query {\akw} estimator}

We further conduct experiments for the {\kw} algorithm with multiple function-value queries ($m>1$) and compare the performance of $m=5, 10, 20$ using different search directions with sampling schemes \hyperref[ti]{\tt{(I+WR)}}, \hyperref[ti]{\tt{(I+WOR)}}, and \hyperref[ts]{\tt{(S)}}. We note that \hyperref[ti]{\tt{(I+WR)}} and  \hyperref[ti]{\tt{(I+WOR)}} refer to the uniform sampling from natural basis with and without replacement, respectively; and \hyperref[ts]{\tt{(S)}} refers to the uniform sampling from the sphere.  We fix the sample size $n=10^5$ and dimension $d=20$ and report the results of the linear regression with Identity and Equicorr designs in Table \ref{table:linear4}. The specification of the stepsizes and spacing parameters in this subsection are the same as the experiments in Table \ref{table:inference} of Section \ref{sec:exp} except for $\eta_0$ is multiplied by $\sqrt{m}$.  

It can be clearly seen from the table that the {\kw} algorithm achieves similar performance in both estimation and inference. Among the three sampling schemes, the algorithm with {\tt{(I+WOR)}} achieves lower estimation error than the other two sampling schemes and constructs around 10\% to 30\% shorter confidence intervals on average while achieving comparable coverage rates, which validates our theoretical results in Section \ref{sec:multiple}.

\subsection{Computation Time}
\label{subsec:compp}
In this subsection, we provide comparisons of the computation time of the plug-in and fixed-$b$ HAR schemes. We report the computation time, the estimation error, and the average coverage rate and length of these candidates based on $100$ replications. The specifications in the experiments are the same as those in Table \ref{table:inference} of Section \ref{sec:exp}. The computation time is recorded in a simulation environment running Python 3.8 with compute nodes equipped with  dual CPU sockets of 24-core Intel Cascade Lake Platinum 8268 chips. As can be seen from Table~\ref{table:inference-p1}, the fixed-$b$ HAR approach gives the fastest execution while the plug-in method provides a higher coverage rate with shorter average lengths. In practice, we would recommend the fixed-$b$ HAR method for those computation-sensitive tasks, and the plug-in method  in less computation-sensitive tasks.

\begin{table}[t]
  \centering
  \begin{tabular}{lcc|c|ccc|ccc}
    \toprule
  $m$ &  $\Sigma$ & $\mathcal{P}_{\v}$ & Estimation error & \multicolumn{3}{c|}{Average coverage rate} & \multicolumn{3}{c}{Average length}\\
    &&& (standard error) & Plug-in  & Oracle & Fixed-$b$ & Plug-in  & Oracle & Fixed-$b$  \\
    \hline    &                       &{\tt{(I+WOR)}}  & 0.029	(	0.004	)&	0.946	&	0.943	&	0.922	&	0.025	&	0.025	&	0.031	\\
    &                       &{\tt{(I+WR)}}& 0.031	(	0.005	)&	0.953	&	0.952	&	0.934	&	0.028	&	0.027	&	0.033	\\
    \multirow{-3}{*}{5}&   \multirow{-3}{*}{Identity} &{\ts}&  0.031	(	0.005	)&	0.940	&	0.939	&	0.933	&	0.028	&	0.027	&	0.035	\\
    \hline    &                       &{\tt{(I+WOR)}}  & 0.034	(	0.005	)&	0.951	&	0.949	&	0.935	&	0.030	&	0.030	&	0.037	\\
    &                       &{\tt{(I+WR)}}& 0.037	(	0.006	)&	0.955	&	0.953	&	0.926	&	0.033	&	0.033	&	0.040	\\
    \multirow{-3}{*}{5}&   \multirow{-3}{*}{Equicorr} &{\ts}&  0.037	(	0.006	)&	0.947	&	0.945	&	0.924	&	0.033	&	0.033	&	0.041	\\
    \hline    &                       &{\tt{(I+WOR)}}  & 0.020	(	0.003	)&	0.945	&	0.945	&	0.930	&	0.018	&	0.018	&	0.022	\\
    &                       &{\tt{(I+WR)}}& 0.024	(	0.004	)&	0.944	&	0.943	&	0.927	&	0.021	&	0.021	&	0.026	\\
    \multirow{-3}{*}{10}&   \multirow{-3}{*}{Identity} &{\ts}&  0.024	(	0.004	)&	0.958	&	0.956	&	0.926	&	0.021	&	0.021	&	0.026	\\
    \hline    &                       &{\tt{(I+WOR)}}  &0.023	(	0.004	)&	0.949	&	0.949	&	0.927	&	0.021	&	0.021	&	0.025	\\
    &                       &{\tt{(I+WR)}}& 0.028	(	0.005	)&	0.956	&	0.954	&	0.926	&	0.025	&	0.025	&	0.031	\\
    \multirow{-3}{*}{10}&   \multirow{-3}{*}{Equicorr} &{\ts}& 0.029	(	0.005	)&	0.943	&	0.943	&	0.922	&	0.025	&	0.025	&	0.030	\\\hline        &                       &{\tt{(I+WOR)}}  &0.015	(	0.002	)&	0.942	&	0.942	&	0.920	&	0.013	&	0.013	&	0.016	\\
    &                       &{\tt{(I+WR)}}&0.020	(	0.003	)&	0.944	&	0.943	&	0.921	&	0.018	&	0.017	&	0.021	\\
    \multirow{-3}{*}{20}&   \multirow{-3}{*}{Identity} &{\ts}&0.020	(	0.003	)&	0.945	&	0.945	&	0.927	&	0.018	&	0.017	&	0.022	\\
    \hline    &                       &{\tt{(I+WOR)}}  & 0.016	(	0.003	)&	0.941	&	0.941	&	0.932	&	0.014	&	0.014	&	0.017	\\
    &                       &{\tt{(I+WR)}}& 0.023	(	0.004	)&	0.949	&	0.948	&	0.922	&	0.020	&	0.020	&	0.025	\\
    \multirow{-3}{*}{20}&   \multirow{-3}{*}{Equicorr} &{\ts}& 0.023	(	0.004	)&	0.946	&	0.946	&	0.926	&	0.020	&	0.020	&	0.025	\\
\bottomrule
  \end{tabular}
  \caption{Comparison among different sampling schemes for multi-query algorithms under linear regression model with dimension $d=20$ and $m=5, 10, 20$, respectively (Detailed specification of {\tt{(I+WOR)},\tt{(I+WR)},\tt{(S)}} can be referred to Sections \ref{sec:choice_dir} and \ref{sec:multiple}). Corresponding standard errors are reported in the brackets.}
  \label{table:linear4}
\end{table}

\begin{table}[t]
  \centering{}
  \small
  \begin{tabular}{ccc|cc|cc}
    \toprule
    Model & $d$  & Estimator & Comp. & Estimation & Coverage & Average \\
    &&&time& error (s.e.) & Rate & length\\
    \hline
    && Plug-in& 19.169   & 0.015	(	0.005	)   & 0.944   & 0.028    \\
    &\multirow{-2}{*}{5}& Fixed-$b$& 9.692   & 0.016	(	0.005	)  & 0.940   & 0.036    \\
     &      & Plug-in&36.468& 0.066	(	0.010	) & 0.943 & 0.058 \\
    &\multirow{-2}{*}{20}&Fixed-$b$& 9.702   &0.068	(	0.011	)  & 0.928  & 0.074    \\
    && Plug-in & 214.470   & 0.180	(	0.018	) & 0.947   & 0.097    \\
     \multirow{-6}{*}{Linear}& \multirow{-2}{*}{50}           & Fixed-$b$ &13.672 & 0.179	(	0.018	)& 0.881& 0.108\\
    \hline
    && Plug-in& 21.149   & 0.037	(	0.011	)   & 0.946   & 0.065    \\
    &\multirow{-2}{*}{5}& Fixed-$b$& 10.300   & 0.035	(	0.011	)   & 0.916   & 0.075    \\
     &      & Plug-in&44.976&0.152	(	0.025	) & 0.943 & 0.128\\
    &\multirow{-2}{*}{20}&Fixed-$b$& 12.155   & 0.147	(	0.026	)   & 0.862  & 0.136    \\
    && Plug-in & 224.102   & 0.404	(	0.040	)   & 0.914   &0.199 \\
     \multirow{-6}{*}{Logistic}& \multirow{-2}{*}{50}           & Fixed-$b$ & 14.265 &0.402	(	0.044	) & 0.688 & 0.140\\
    \bottomrule
  \end{tabular}
  \caption{Computation time, estimation errors, averaged coverage rates, and average lengths of the proposed algorithm with search direction {\ti} and two function queries ($m=1$). Sample size $n=10^5$, dimension $d=5,20,50$ under the linear and logistic regression model with identity design matrix. Corresponding standard errors are reported in the brackets. }
  \label{table:inference-p1}
\end{table}

\clearpage
\bibliographystyle{chicago-ff}
\bibliography{refs}

\clearpage

\bibliographystyle{chicago-ff}
\bibliography{refs}

\end{document}